\documentclass[11pt,a4paper]{amsart}
\usepackage[utf8]{inputenc}
\usepackage[english]{babel}
\usepackage{helvet, stmaryrd, ulem}
\usepackage{amsmath}
\usepackage{amsthm}
\usepackage{amsfonts}
\usepackage{amssymb}
\usepackage{mathtools}
\usepackage{mathrsfs}
\usepackage{dsfont}
\usepackage{tabularx}
\usepackage{xcolor}
\usepackage{bbm}
\usepackage{mhequ}
\usepackage{todonotes}
\setlength{\marginparwidth}{8cm} 
\usepackage[paper=a4paper,left=25mm,right=25mm,top=25mm,bottom=35mm]{geometry}
\usepackage{hyperref}
\usepackage[noabbrev,capitalize]{cleveref}
\usepackage{comment}

\hypersetup{pdftitle={Higher order approximation for nonlinear SPDEs}, pdfauthor={ Ana Djurdjevac, Máté Gerencsér, Helena Kremp}}

\newtheorem{theorem}{Theorem}[section]
     
\newtheorem{proposition}[theorem]{Proposition}
\newtheorem{lemma}[theorem]{Lemma}	

\newtheorem{corollary}[theorem]{Corollary}

\newtheorem{remark}[theorem]{Remark}
\newtheorem{assumption}[theorem]{Assumption}

\numberwithin{equation}{section}

\newcommand{\R}{\mathbb{R}}
\newcommand{\cR}{\mathcal{R}}

\newcommand{\T}{\mathbb{T}}
\newcommand{\N}{\mathbb{N}}
\newcommand{\E}{\mathds{E}}
\newcommand{\p}{\mathds{P}}

\newcommand{\F}{\mathcal{F}}
\newcommand{\cB}{\mathcal{B}}
\newcommand{\cM}{\mathcal{M}}

\newcommand{\calC}{\mathscr{C}}

\renewcommand{\mathcal}[1]{\mathscr{#1}}
\renewcommand{\epsilon}{\varepsilon}
\newcommand{\eps}{\varepsilon}
\newcommand{\bF}{\mathbb{F}}
\newcommand{\Z}{\mathbb{Z}}
\newcommand{\C}{\mathbb{C}}

\newcommand{\squeeze}[2][0]{\mbox{$\medmuskip=#1mu\displaystyle#2$}}

\newcommand{\Id}{\operatorname{Id}}

\DeclarePairedDelimiter{\abs}{\lvert}{\rvert}
\DeclarePairedDelimiter{\norm}{\lVert}{\rVert}

\DeclarePairedDelimiter{\paren}{(}{)}

\AtEndDocument{\bigskip{\footnotesize%
  \textsc{Freie Universität Berlin, Arnimallee 6, 14195 Berlin, Germany and University of Oxford, Mathematical Institute, Oxford, UK} \par  
  \textit{E-mail address}, A.~Djurdjevac: \texttt{ana.djurdjevac@maths.ox.ac.uk} \par
  \addvspace{\medskipamount}
  \textsc{TU Wien, Wiedner Hauptstra\ss e 8-10,
 1040 Vienna, Austria}\par
  \textit{E-mail address}, M.~Gerencsér: \texttt{mate.gerencser@asc.tuwien.ac.at} \par
  \addvspace{\medskipamount}
  \textsc{Weierstraß-Institut Berlin, Anton-Wilhelm-Amo-Straße 39, 10117 Berlin, Germany and Technische Universität Berlin, Straße des 17. Juni 136, 10587 Berlin, Germany} \par
  \textit{E-mail address}, H.~Kremp: \texttt{kremp@wias-berlin.de}
}}

\begin{document}

\title[Higher order approximation of nonlinear SPDEs]{Higher order approximation of nonlinear SPDEs with additive space-time white noise}

\date{\today}

\author{Ana Djurdjevac, Máté Gerencsér, Helena Kremp}

 %\affil[$\dagger$]{Freie Universität Berlin, Arnimallee 6, 14195 Berlin, Germany and University of Oxford, Mathematical Institute, Oxford, UK}

%\affil[$\star$]{TU Wien, Wiedner Hauptstra\ss e 8-10, 1040 Vienna, Austria}
 
%\affil[$\ddag$]{Weierstraß-Institut Berlin, Anton-Wilhelm-Amo-Straße 39, 10117 Berlin, Germany and Technische Universität Berlin, Straße des 17. Juni 136, 10587 Berlin, Germany}
%\email[$\dagger$]{ana.djurdjevac@maths.ox.ac.uk}
%\email[$\star$]{mate.gerencser@asc.tuwien.ac.at}
%\email[$\ddag$]{kremp@wias-berlin.de}

	\begin{abstract}
We consider strong approximations of $1+1$-dimensional stochastic PDEs driven by additive space-time white noise.
It has long been proposed \cite{Davie-Gaines, Jentzen-Kloden}, as well as observed in simulations, that approximation schemes based on samples from the stochastic convolution, rather than from increments of the underlying Wiener processes, should achieve significantly higher convergence rates with respect to the temporal timestep.
The present paper proves this.
For a large class of nonlinearities, with possibly superlinear growth, a temporal rate of (almost) $1$ is proven, a major improvement on the rate $1/4$ that is known to be optimal for schemes based on Wiener increments.
The spatial rate remains (almost) $1/2$ as it is standard in the literature.
		
		\bigskip
		
		 {{\sc Mathematics Subject Classification (2020):  60H15, 60H35 }
		
		}

		{{\sc Keywords: stochastic PDEs, strong convergence rates, splitting scheme, stochastic sewing}  }
	\end{abstract}

\maketitle

\section{Introduction}
We consider stochastic reaction-diffusion equations of the form
\begin{align}\label{eq:SPDE}
    \partial_{t}u=\Delta u+ f(u)+\xi, 
\end{align}
with $(t,x)\in\R^{+}\times\mathbb{T}$, space-time white noise $\xi$, a given nonlinearity $f:\R\to\R$ and initial condition $u_0$.
A common way to express the noise is to write $\xi$ as the (distributional) derivative $\partial_t\partial_x W$ of a $2$-dimensional Brownian sheet $W$.

Under some mild regularity assumptions on $f$,
existence and uniqueness of solutions to \eqref{eq:SPDE} is classical. 
In the present work we are interested in full discretisations of the equation. This question was first addressed in \cite{Gy}. A finite difference in space, (explicit or implicit) Euler method in time was studied based on sampling rectangular increments of $W$ 
on a grid with meshsize $M^{-1}$ in time and $N^{-1}$ in space, and $L^p$ rate of convergence of order $M^{-1/4}+N^{-1/2}$ was proved\footnote{To avoid obscuring the overview of the literature with $\eps$-s, for simplicity we do not distinguish between rate $\alpha$ and rate $\alpha-\eps$ for all $\eps>0$ in the mentioned results. }.
In \cite{Davie-Gaines} this rate is shown to be optimal in the sense that the conditional variance of the solution at a point given such samples of $W$ is lower bounded by a positive constant times $M^{-1/4}+N^{-1/2}$.
Similar upper and lower bounds are obtained in \cite{BGJK} for schemes based on a Galerkin truncation of $W$ in space and then sampling its increments in time. 

In an attempt to overcome the order barrier $1/4$ with respect to the temporal stepsize, \cite{Jentzen-Kloden} proposed a different scheme (already hinted at in \cite[Section~2.3]{Davie-Gaines}). The essential difference was the use of different functionals of the noise: instead of sampling increments of $W$, they used samples from the stochastic convolution with the semigroup generated by $\Delta$.
The stochastic convolution is a Gaussian process with explicitly known covariance, so that its sampling is straightforward.
\cite{Jentzen-Kloden} considered, instead of \eqref{eq:SPDE}, equations in a more abstract framework, viewing the nonlinearity  as a function $F:L^2(\T)\to L^2(\T)$ and the equation as a SDE on the Hilbert space $L^2(\T)$.
Under certain regularity conditions on $F$, the new scheme was shown to have a far superior rate of convergence $1$ with respect to the time stepsize, a major improvement on the previous rate $1/4$.
However, one of the main assumptions therein turns out to be too restrictive to allow for any(!) truly nonlinear SPDEs of the form \eqref{eq:SPDE}:
\begin{assumption}(\cite[part of Asn.~2.4]{Jentzen-Kloden})\label{asn:JK}
The map $F$ is Gateaux differentiable and there exists constant $L>0$ such that for all $u\in L^2(\T)$, $v\in\mathrm{Dom}(1-\Delta)$ one has
\begin{align}
        \big\|(1-\Delta)^{-1}F'(u)(1-\Delta)v\|_{L^2(\T)}\leq L\|v\|_{L^2(\T)}
\end{align}
\end{assumption}

\begin{proposition}\label{prop:JK-bad}
Let $f:\R\to\R$ be a differentiable function with bounded derivative and let $F:L^2(\T)\to L^2(\T)$ be defined as $F(u)(x)=f(u(x))$.
%Then $F$ is Fréchet differentiable at some $u^{\star}\in L^{2}$ and $f'\in C_{b}(\R)$ if and only if $f$ is affine linear.
Then $F$ satisfies \cref{asn:JK} if and only if $f$ is affine linear.
\end{proposition}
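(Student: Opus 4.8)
The plan is to prove the two implications separately; the forward one is immediate and the converse is the substantive part. If $f(s)=as+b$ then $F(u)=au+b$ is Gateaux differentiable with $F'(u)=a\,\Id$ for every $u\in L^2(\T)$, so the operator in \cref{asn:JK} is simply multiplication by the scalar $a$ and the inequality holds with $L=|a|+1$.

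For the converse, the first step is to record that whenever $f$ is differentiable with bounded derivative, $F$ is automatically Gateaux differentiable and $F'(u)$ is pointwise multiplication by $f'(u(\cdot))$; this is a one-line dominated convergence argument on the difference quotients, using the mean value theorem and $\|f'\|_{\infty}<\infty$ for domination. Hence \cref{asn:JK} asks precisely that the operators $A_u:=(1-\Delta)^{-1}M_{f'(u)}(1-\Delta)$ — $M_g$ denoting multiplication by $g$ — be bounded on $L^2(\T)$ uniformly over $u\in L^2(\T)$. The heuristic for why this should fail for genuinely nonlinear $f$ is the (formal, for smooth $g$) identity $A_u=M_{f'(u)}+(1-\Delta)^{-1}\big(2M_{g'}\partial_x+M_{g''}\big)$ with $g=f'(u)$, coming from the commutator $[M_g,-\Delta]=2M_{g'}\partial_x+M_{g''}$: the smoothing of $(1-\Delta)^{-1}$ is destroyed once $f'(u)$ is allowed to oscillate on the same frequency scale as the argument. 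The task is to turn this into an honest counterexample.

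So suppose $f$ is not affine; then there exist $s_1,s_2\in\R$ with $f'(s_1)\ne f'(s_2)$. For $N\in\N$ I would take the two-valued function and the resonating test function
\begin{equation*}
u_N:=s_1\,\1_{\{\sin(N\cdot)>0\}}+s_2\,\1_{\{\sin(N\cdot)\le0\}}\in L^2(\T),\qquad v_N:=\tfrac{1}{1+N^2}\sin(N\cdot)\in\mathrm{Dom}(1-\Delta),
\end{equation*}
so that $(1-\Delta)v_N=\sin(N\cdot)$ and $\|v_N\|_{L^2(\T)}\asymp N^{-2}$. Then $f'(u_N)$ is a genuine (rescaled) square wave oscillating between $f'(s_1)$ and $f'(s_2)$, and a direct computation of its Fourier series shows that, besides its mean, it has a $\sin(N\cdot)$-component with coefficient $\gamma=\tfrac2\pi\big(f'(s_1)-f'(s_2)\big)\ne0$, crucially independent of $N$. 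By orthogonality the product $f'(u_N)(1-\Delta)v_N=f'(u_N)\sin(N\cdot)$ then has zeroth Fourier mode equal to $\gamma/2\ne0$, which $(1-\Delta)^{-1}$ leaves untouched; hence $\|A_{u_N}v_N\|_{L^2(\T)}$ is bounded below by a positive constant while $\|v_N\|_{L^2(\T)}\lesssim N^{-2}\to0$. This contradicts the existence of a finite $L$ in \cref{asn:JK}, so $f'$ must be constant, i.e. $f$ affine.

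The one point needing care — and the reason for choosing a two-valued $u_N$ rather than a smooth perturbation of a constant — is that differentiability of $f$ does not give continuity of $f'$, so one cannot simply Taylor-expand $f'(u)$ around a single value. With $u_N$ two-valued, $f'(u_N)$ is literally a dilated square wave whose harmonics are computed exactly, and the one fact that drives the contradiction, namely that the resonant coefficient $\gamma$ is nonzero and does not depend on $N$, becomes transparent. Everything else reduces to routine Fourier and $L^2$ bookkeeping.
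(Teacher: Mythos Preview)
Your argument is correct. The approach is close in spirit to the paper's but packaged differently. The paper fixes a single two-valued $u$ (a step function with one jump), rewrites \cref{asn:JK} as boundedness of multiplication by $h:=f'(u)$ on $H^{-2}$, and then tests against all Fourier modes $z_k=k^2e_k$ to deduce $\sum_k k^2|\hat h(k)|^2<\infty$, i.e.\ $h\in H^1$, which is impossible for a nontrivial step function. You instead let both the multiplier $u_N$ and the test function $v_N$ oscillate at the \emph{same} frequency $N$ and read off the blow-up from the resonant zeroth Fourier mode of the product. Both arguments exploit the same mechanism (a discontinuous $f'(u)$ has slowly decaying Fourier tail, which resonates with high-frequency inputs and survives the smoothing of $(1-\Delta)^{-1}$); the paper's version is a little more economical since it needs only one $u$, while yours is more explicit in pinpointing a concrete blow-up sequence.

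One cosmetic point to fix: on $\T=\R/\Z$ as set up in the paper, $\sin(N\cdot)$ is not periodic; you want $\sin(2\pi N\cdot)$ and $v_N=(1+4\pi^2N^2)^{-1}\sin(2\pi N\cdot)$, so that $(1-\Delta)v_N=\sin(2\pi N\cdot)$ and $\|v_N\|_{L^2}\asymp N^{-2}$. The rest of your Fourier bookkeeping (square-wave coefficient $\tfrac{2}{\pi}(f'(s_1)-f'(s_2))$ at the fundamental, hence zeroth mode $\tfrac{1}{\pi}(f'(s_1)-f'(s_2))$ for the product) goes through unchanged.
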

\begin{remark}\label{rem:Frechet}
In \cite{Jentzen-Kloden} (and in a rather large portion of the literature) $F$ is in fact assumed to be not only Gateaux, but Fr\'echet differentiable.
Note that this already excludes all truly nonlinear Nemytskii operators:
If $F:L^2(\T)\to L^2(\T)$ is defined as $F(u)(x)=f(u(x))$ with $f$ being a differentiable function with bounded derivative, then there exists $u^{\star}\in L^{2}$ such that $F$ is Fr\'echet differentiable at $u^\star$ if and only if $f$ is affine linear (\cite[Proposition 2.8]{AP93}). 
%is Gâteaux-differentiable with Gâteaux differential at $u^{\star}\in L^{2}$ given by $v\to f'(u^{\star})v$, but in general not Fréchet differentiable (cf.~\cite[Theorem 2.7]{AP93}). 
\end{remark}
The proof is fairly straightforward and is given in Section \ref{sec:prelim}.
In light of Proposition \ref{prop:JK-bad}, the problem of ``overcoming of the order barrier'' for \eqref{eq:SPDE} remained open. Partial progress towards the conjectured rate $1$ has been made in \cite{Jentzen12} and \cite{Wang}, who proved rate $1/2$ in time for globally Lipschitz $f$ and cubic polynomial $f$ with negative leading order coefficient, respectively. Some partial results hinting at the possibility of a higher rate can be found in the theses \cite{thesis-1}, \cite{thesis-2}.
On the other side, temporal rate $1$ is proved in \cite{JKW, WQ, BrehierJH18, LT} partially overcoming the restrictive Assumption \ref{asn:JK}, but assuming a strong coloring condition on the noise instead, falling well short of the space-time white noise case.
In yet another direction, \cite{GS} proved rate $1/2$ in time (and even an improved rate $1$ in space) for \eqref{eq:SPDE} with polynomial $f$ with odd degree and negative leading order coefficient, even with just using Wiener increments, at the cost of measuring the error not in a space of functions, but rather a space of distributions. While such distributional norms are rather natural when dealing with higher dimensional stochastic reaction-diffusion equations (see e.g.~\cite{ma2021}), in the $1$ dimensional case it is desirable to bound the error in a genuine function space.

The aim of the present paper is to overcome all of the aforementioned caveats of \cite{Jentzen-Kloden, Jentzen12, JKW, Wang, GS}. We prove strong rate of convergence of rate $1-\eps$ in time for any $\eps$. The spatial error remains of order $N^{-1/2+\eps}$ as common in the preceding literature.
The methods can be applied in considerable generality, both in terms of the nonlinearity $f$ and the employed scheme. First, we consider $f$ that is globally bounded and has globally bounded derivatives. We show the aforementioned rate of convergence for a spectral Galerkin scheme in space and accelerated exponential explicit Euler in time. In this case the global bounds on $f$ allow simple a priori bounds as well as the application of Girsanov's theorem, which greatly simplifies the proof. The error estimate is uniform in space-time in this case.
Second, we consider $f$ that can grow polynomially, obeying the one-sided Lipschitz condition. For this class of equations, simple schemes like  exponential explicit Euler or standard Euler are not suitable due to the blow-up of such approximations, cf.~\cite{HutzenthalerSDE, beccari2019strong}. Instead one considers tamed schemes (see e.g. \cite{becker2023strong, BJ19, JP20, Wang}), splitting  schemes (see e.g.~\cite{BrehierJH18,BrehierG19}) or implicit Euler schemes (see e.g.~\cite{liu2019strong}), for which a priori bounds on the numerical solution can be derived (and therefore the blow-up is avoided). We employ a splitting scheme 
for the temporal discretisation and prove temporal rate $M^{-1+\epsilon}$
with the error measured uniform in time and $L^2$ in space.
The precise statements are formulated in Section \ref{sec:formulation} in Theorems \ref{thm:main} and \ref{thm:main2}.

The strategy leading to this improved rate relies on two key ingredients, inspired by \cite{GS} and \cite{BDG-SPDE}. The first important property to note is that the errors strongly depend on the topology in which they are measured. To illustrate this, consider the Ornstein-Uhlenbeck process $O$, that is, the solution to \eqref{eq:SPDE} with $f=0$, $u_0=0$. It is well-known that $O$ is almost $1/4$-H\"older continuous in time but no better. That is, for all $\eps>0$ there exist constants $c=c(\omega)>0$, $C=C(\eps,\omega)<\infty$ such that almost surely for all $0\leq s\leq t\leq 1$
\begin{align}\label{eq:O-triv}
   c|t-s|^{1/4}\leq \|O_t-O_s\|_{L^\infty(\T)}\leq C |t-s|^{1/4-\eps}.
\end{align}
However, moving to weaker topologies the time regularity of $O$ increases: for example, for any $\eps>0$ there exists $C=C(\eps,\omega)<\infty$ such that for all $0\leq s\leq t\leq 1$
\begin{align}\label{eq:O-lesstriv}
    \|O_t-O_s\|_{\calC^{-1/2}(\T)}\leq C |t-s|^{1/2-\eps},
\end{align}
where $\calC^{-1/2}(\T)$ is a Besov-H\"older space of negative regularity (see Section \ref{sec:prelim} below for details).
Since the regularity properties are naturally linked to rates of convergence of discretisations, one would like to leverage improved temporal regularity estimates like \eqref{eq:O-lesstriv} to obtain improved temporal rates.
This is the starting point to achieve $1/2$ temporal rate in a distributional norm \cite{GS}.
One key point of the present paper is to use similar ideas but still end up with error bounds in a functional norm.
Note however that this idea seems to stop at rate $1/2$: one can not weaken the topology further in \eqref{eq:O-lesstriv} to improve the temporal regularity. Indeed, even a single Fourier mode of $O$ is no better than $1/2$-H\"older continuous in time.

To illustrate the other main ingredient, consider the time integral
\begin{align}\label{eq:EM}
    E_M:=\Big|\int_0^T P_{T-s}\big(f(O_s)-f(O_{k_M(s)})\big)\,ds\Big|.
\end{align}
Here $P$ is the heat kernel, $M$ is the number of timesteps in an equidistant partition of the time horizon, and $k_M(s)$ is the last gridpoint before $s$.
In the error analysis, it turns out that $E_M$ determines the temporal rate.
However, efficient estimates for $E_M$ are highly nontrivial even if $f\in C_c^\infty$.
Using the triangle inequality, a global Lipschitz bound on $f$, and \eqref{eq:O-triv}, one easily obtains that $E_M\lesssim M^{-1/4+\epsilon}$, which is the classical error rate.
It is not clear how one could use \eqref{eq:O-lesstriv} in order to improve the rate, since a truly nonlinear function $f$ is not Lipschitz continuous with respect to the  $\calC^{-1/2}(\T)$-norm. Even if one could overcome this, the resulting error bound would only be $M^{-1/2+\epsilon}$.
The main improvement on estimating $E_M$ comes from \textit{not} using the triangle inequality.
Indeed, a fundamental idea coming from the field of regularisation by noise is that integrals along oscillatory processes enjoy a lot of cancellations, which are lost when bringing the absolute value inside the integral.
For example, in the case when $f$ is merely a bounded measurable function, a slight variation of \cite[Lemma~3.3.1]{BDG-SPDE} shows that $E_M\lesssim M^{-1/2 + \epsilon}$, where the triangle inequality would give no rate whatsoever.
A robust approach to obtain such improved estimates is the stochastic sewing strategy, originating from \cite{Khoa} and introduced to treat numerical analytic problems in \cite{BDG}. It is interesting to note however, that for many regularisation by noise arguments the $1+1$-dimensional stochastic heat equation behaves much like a fractional Brownian motion $B$ with Hurst parameter $1/4$, for which the best known rate for the analogue of $E_M$, namely the error
\begin{align}\label{eq:EM-SDE}
    \Big|\int_0^T f(B_s)-f(B_{k_M(s)})\,ds\Big|,
\end{align}
is $3/4-\eps$ (\cite[Lemma~4.1]{BDG}) even in the case of $f\in C^\infty_c$. 

Therefore, while neither of the two methods are sufficient on their own to obtain the desired rate, the aim of the present paper is to combine them in such a way that leverages the advantages of both the distributional power counting and the stochastic sewing, and yield the claimed temporal rate $1-\eps$. Since the expressions \eqref{eq:EM} and \eqref{eq:EM-SDE} differ by the semigroup $P$ inside the integral, the heuristic goal is to use it to improve the rate by lowering the spatial regularity where $f(O_s)-f(O_{k_M(s)})$ is estimated.

It is notable that our strong rate of $1$ for the temporal error even exceeds the best known weak error rates for splitting schemes of the Allen-Cahn equation, cf.~\cite{BG-weak}, or for exponential Euler schemes of nonlinear heat equations, cf.~\cite{Wang-weak}, where a temporal weak rate of $1/2$ is proven.

The stochastic Allen-Cahn equation with periodic boundary conditions being our prime example, we remark that periodic boundary conditions, although mathematically convenient in this paper, also have a physical relevance: in \cite{BG} the authors study Kramer's law, exhibiting different interesting behavior for periodic and Neumann boundary conditions. We expect that for our results accomodating different boundary conditions (Neumann or even Dirichlet as in \cite{FJL}) is doable without any essential difficulties, but needs more involved function spaces.

Unlike in the Wiener increment sample case \cite{Davie-Gaines}, for schemes based on sampling the stochastic convolution we are not aware of any existing lower error bounds. In \cref{prop:lb} below we thus give a short argument, which shows optimality of the temporal order $M^{-1}$. Moreover we provide numerical evidence in the case of the Allen-Cahn nonlinearity, which shows this temporal rate.

Let us end with a couple of remarks and open questions.
In the present paper we consider an accelerated exponential explicit Euler and a splitting scheme for the time discretization.
It would be interesting to extend the methods to other approximations, e.g. to implicit Euler or tamed schemes.
Furthermore, it seems promising to pursue this strategy for SPDEs whose nonlinearities do not only depend on the solution but also on its gradient, e.g. the stochastic Burgers' equation, whose nonlinearity is $\partial_x (u^2)$.

\section*{Acknowledgements}
The authors thank Arnulf Jentzen for bringing this interesting problem to their attention, providing many useful references, and pointing out the properties of Fr\'echet differentiability in Remark \ref{rem:Frechet}. The authors thank Owen Hearder for providing the code to produce the numerical plots.

ADj gratefully acknowledges funding by  Daimler and Benz Foundation as part of the scholarship program for junior professors and postdoctoral researchers. 
MG is funded by the European Union (ERC, SPDE, 101117125). Views and opinions expressed are however those of the author(s) only and do not necessarily reflect those of the European Union or the European Research Council Executive Agency. Neither the European Union nor the granting authority can be held responsible for them. The paper was written while HK was supported by the Austrian Science Fund (FWF) P34992. ADj and HK acknowledge support from the Deutsche Forschungsgemeinschaft (DFG) CRC/TRR 388 “Rough Analysis, Stochastic Dynamics and Related Fields” - Project ID 516748464.

\section{Set-up and Statement}\label{sec:formulation}
Let $(\Omega,\F,\p)$ be a probability space. We fix a time horizon $T>0$. The torus $\T$ is defined as $\T=\R/\Z$.
The space-time white noise $\xi$ is defined as a mapping from the Borel sets $\cB([0,T]\times\mathbb{T})$ into $L^{2}(\Omega)$, such that for any collection $A_{1},\dots,A_{k}\in \cB([0,T]\times\mathbb{T})$, the vector $(\xi(A_{1}),\dots,\xi(A_{k}))$ is Gaussian with zero mean and covariance $\E[\xi(A_{i})\xi(A_{j})]=\lambda(A_{i}\cap A_{j})$, where $\lambda$ denotes the Lebesgue measure.
We also fix a filtration $\bF=(\F_{t})_{t\in[0,T]}$, such that $(\Omega,\bF,\p)$ is complete and such that for any $t\in[0,T]$, $A\in \cB([0,t]\times\mathbb{T})$, $B\in \cB([t,T]\times\mathbb{T})$, $\xi(A)$ is $\F_{t}$-measurable and $\xi(B)$ is independent of $\F_{t}$. An example for $\bF$ would be the completed filtration generated by $\xi$.
The predictable $\sigma$-algebra on $\Omega\times[0,T]$ will be denoted by $\mathcal{P}$. Stochastic integrals $\int_{0}^{T}\int_{\mathbb{T}}g(s,y)\xi(ds,dy)$ against $\xi$ can be defined for all $\mathcal{P}\times \cB(\mathbb{T})$-measurable integrands $g:\Omega\times[0,T]\times\mathbb{T}\to\R$ with $g\in L^{2}(\Omega\times[0,T]\times\mathbb{T})$.
We refer to \cite{DPZ} for more details, but remark that for deterministic $f$ (which is the case used in the large majority of the article), the stochastic integral is simply the unique isometric and linear extension of the map $\mathbf{1}_A\mapsto\xi(A)$ to $L^2([0,T]\times\T)$.

For $k\in\Z$, denote the Fourier modes on $\T$ by $e_{k}(x)=e^{-2\pi ikx}$. The set $(e_k)_{k\in\Z}$ is an orthonormal basis of $L^2(\T,\C)$.
For $f\in L^1(\T,\C)$, its Fourier transform is denoted by $\mathcal{F} f (k)=\hat{f}(k)=\int_{\mathbb{T}}e^{-2\pi i k x}f(x)dx$, $k\in\Z$.
Denote by $(P_{t})_{t\geq 0}$ the heat semigroup on the torus, that is
$$P_{t}f=\mathcal{F}^{-1}(e^{-4\pi^2k^2t}\mathcal{F} f (k)).$$
One can equivalently write $P_t f=p_t\ast f=\int_{\R} p_{t}(\cdot-y)f(y)dy$, where
$$
p_{t}(x)=\sum_{k\in\mathbb{Z}}e^{-4\pi^2k^2t}e^{2\pi i k x}=\frac{1}{\sqrt{4\pi t}}\sum_{m\in\Z}e^{-(x-m)^2/4t}.
$$

We consider the mild formulation of \eqref{eq:SPDE}: 
\begin{align}\label{eq:mild}
    u_{t}=P_{t}u_{0}+\int_{0}^{t}P_{t-s}\big(f(u_{s})\big)ds+\int_{0}^{t}\int_{\mathbb{T}}p_{t-s}(x-y)\xi(ds,dy).
\end{align}
The stochastic convolution, also referred to as the Ornstein-Uhlenbeck (OU) process, is denoted by
\begin{align}\label{eq:OU}
    O_{t}:=\int_{0}^{t}\int_{\mathbb{T}}p_{t-s}(x-y)\xi(ds,dy), \quad t\geq 0.
\end{align}
The well-posedness of the mild formulation is classical under a one-sided Lipschitz and polynomial growth assumption on $f$, see Proposition \ref{prop:wp} below.

We start by setting up the result in the easier case of $f$ being bounded with bounded derivatives up to order $2$. In this setting, several steps of the proof are simplified.
Introducing the key ideas in this case hopefully benefits the reader in understanding the more general form. For now we work under the following assumption.
\begin{assumption}\label{asn:easier}
\begin{enumerate}
    \item[(a)] There exists a constant $K$ such that for all $i=0,1,2$ and all $x\in \R$ one has
    \begin{align}
        |\partial^i f(x)|\leq K,
    \end{align}
    with the convention that $\partial^0f=f$.
    \item[(b)] The initial condition $u_0$ is an $\F_0$-measurable random variable with values in $\calC^{1/2}(\T)$ and for any $p\in[1,\infty)$ there exists a constant $\cM(p)$ such that $\E\|u_{0}\|_{\calC^{1/2}}^p\leq\cM(p)$.
\end{enumerate} 
\end{assumption}

We use the approximation scheme exactly as in \cite{Jentzen-Kloden}, that is a spectral Galerkin scheme in space and accelerated exponential Euler scheme in time, defined as follows.
For $N\in\N$, let $\Pi_{N}$ denote the orthogonal projection from $L^{2}(\mathbb{T},\C)$ to the subspace $\text{span}(e_{k},\abs{k}\leq N)$.
Let $\Delta_{N}:=\Delta\Pi_{N}=\Pi_{N}\Delta$ and let $(P_{t}^{N}:=P_{t}\Pi_{N})_{t\geq 0}$ be the corresponding semigroup.
As before, one can write $P_t^N f=p_t^N\ast f$, where
$$
p_{t}^{N}(x)=\sum_{\abs{k}\leq N}e^{-4\pi^2k^2t}e^{2\pi i k x}.
$$
One then first defines a spatial approximation $U^{N}$ as the solution of the finite dimensional SDE
\begin{align}\label{eq:UN}
    U^{N}_{t}=P^{N}_{t}u_{0}+\int_{0}^{t}P_{t-s}^{N}f(U^{N}_{s})ds+O_{t}^{N},\quad t\in[0,T],
\end{align}
with the notation $O^N$ for the spatially discretised stochastic convolution
\begin{align}\label{eq:discrete-OU}
O^{N}_{t}=\int_{0}^{t}\int_{\mathbb{T}}p^{N}_{t-s}(x-y)\xi(ds,dy).
\end{align}

For $M\in\N$, let $h=T/M$ and consider the temporal gridpoints $t_{k}=kh$ for $k=0,\ldots,M$. Let for $s\in[0,1]$, $k_{M}(s):=\lfloor h^{-1}s\rfloor h$ be the last gridpoint before (or equal to) $s$.
The exponential Euler time discretisation   of \eqref{eq:UN}, that yields a space-time discretisation of \eqref{eq:mild},
is denoted by $V^{M,N}$ and defined inductively by setting
$V_{0}^{M,N}=\Pi_{N}u_{0}=:u_{0}^{N}$ and 
\begin{align*}
    V_{t_{k+1}}^{M,N}=P_{h}^{N}V_{t_{k}}^{M,N} + \Delta_{N}^{-1}(P_{h}^{N}-\operatorname{Id})\Pi_N(f(V_{t_k}^{M,N}))+O^{N}_{t_{k+1}}-P_{h}^{N}O_{t_{k}}^{N} 
\end{align*} for $k=0,\dots,M-1$.
Here $\operatorname{Id}$ denotes the identity operator on $L^{2}$ and we set by convention $\Delta_{N}^{-1}(P_{h}^{N}-\operatorname{Id})e_{0}:=h e_0$.
Alternatively (and often more conveniently) one can express $V^{M,N}$ also in a ``mild'' form
\begin{align}\label{eq:JK-scheme}
    V^{M,N}_{t_{k+1}}&=P_{t_{k+1}}^{N}u_{0}+\sum_{l=0}^{k}\int_{t_{l}}^{t_{l+1}}P_{t_{k+1}-s}^{N}f(V_{t_{l}}^{M,N})ds+O_{t_{k+1}}^{N}\nonumber
    \\&=P_{t_{k+1}}^{N}u_{0}+\int_{0}^{t_{k+1}}P_{t_{k+1}-s}^{N}f(V_{k_{M}(s)}^{M,N})ds+O_{t_{k+1}}^{N}, \quad k=0,\dots,M-1,
\end{align} 
 see e.g. \cite[Sec.~5.(b)]{Jentzen-Kloden}.
An advantage of the form \eqref{eq:JK-scheme} is that one can easily extend it to arbitrary (i.e. not grid-)points $t\in[0,T]$, simply by replacing each instance of $t_{k+1}$ on the right-hand side by $t$. We will frequently use this extension. 

\begin{theorem} \label{thm:main}
Let \cref{asn:easier} hold. 
    Let $u$ be the unique mild solution of \eqref{eq:SPDE} and for any $M,N\in\N$, let $V^{M,N}$ be as above.
    Then for any $\epsilon>0$ and $p\in[1,\infty)$ there exists a constant $C=C(T,\epsilon, p, K, \cM)$\footnote{Here and below, whenever $\theta$ is some collection of parameters, expressions of the form $C=C(\theta,\cM)$ mean that there exists a $p^*$ depending on $\theta$ such that the constant $C$ depends only on $\theta$ and $\cM(p^*)$.} such that 
   \begin{align}
      \big(\E\sup_{t\in[0,T]}\norm{u_{t}-V^{M,N}_{t}}_{L^{\infty}(\mathbb{T})}^p\big)^{1/p}\leq C \big(N^{-1/2+\epsilon}+M^{-1+\epsilon}\big).
   \end{align}
\end{theorem}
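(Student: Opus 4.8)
The plan is to split the error into a spatial part $u_t - U^N_t$ and a temporal part $U^N_t - V^{M,N}_t$, and estimate each separately, with the spatial part being essentially standard and the temporal part requiring the stochastic sewing machinery. For the spatial error, one writes the mild equations \eqref{eq:mild} and \eqref{eq:UN} and subtracts, obtaining $u_t - U^N_t = (P_t - P_t^N)u_0 + \int_0^t (P_{t-s} - P_{t-s}^N)f(u_s)\,ds + \int_0^t P_{t-s}^N(f(u_s) - f(U^N_s))\,ds + (O_t - O_t^N)$. The first three terms are handled by classical smoothing estimates on $P_t - P_t^N = P_t(\Id - \Pi_N)$ together with the boundedness of $f$ and the a priori $\calC^{1/2}$-regularity of $u$ and $u_0$; the fourth term, $O_t - O_t^N$, is the tail of a Gaussian series whose $L^p(\Omega;L^\infty)$-norm is of order $N^{-1/2+\epsilon}$ by a direct Fourier computation (Kolmogorov continuity in space); the Lipschitz term $f(u_s) - f(U^N_s)$ is absorbed by Gronwall since $f' $ is bounded. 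Girsanov's theorem (available here because $f$ is globally bounded) can be used to transfer a priori moment bounds from $O^N$ to $U^N$, which simplifies getting the requisite regularity.

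For the temporal error, subtract \eqref{eq:JK-scheme} (extended to arbitrary $t$) from \eqref{eq:UN}:
\begin{align*}
U^N_t - V^{M,N}_t = \int_0^t P_{t-s}^N\big(f(U^N_s) - f(V^{M,N}_{k_M(s)})\big)\,ds.
\end{align*}
Write $f(U^N_s) - f(V^{M,N}_{k_M(s)}) = \big(f(U^N_s) - f(U^N_{k_M(s)})\big) + \big(f(U^N_{k_M(s)}) - f(V^{M,N}_{k_M(s)})\big)$. The second difference, paired with $P_{t-s}^N$ and integrated, gives a term that is controlled (after a Gronwall argument using that $f'$ is bounded and that $k_M(s) \le s$, together with a discrete Gronwall on the gridpoints) by the supremum of the error up to time $s$, and can thus be absorbed. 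The heart of the matter is the first difference, i.e.\ the quantity $\int_0^t P_{t-s}^N(f(U^N_s) - f(U^N_{k_M(s)}))\,ds$, which is the analogue of $E_M$ in \eqref{eq:EM} but with $U^N$ in place of $O$. Here one replaces $U^N$ by $O^N$ plus a remainder: the remainder $U^N_s - O^N_s = P_s^N u_0 + \int_0^s P_{s-r}^N f(U^N_r)\,dr$ is, thanks to the boundedness of $f$ and the regularity of $u_0$, considerably smoother in time than $O^N$ itself (roughly Lipschitz-in-time into $\calC^{\alpha}$ for suitable $\alpha$), so its contribution, after a first-order Taylor expansion of $f$ and using the boundedness of $f'$, is of order $M^{-1+\epsilon}$ straightforwardly. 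This reduces everything to estimating $\int_0^t P_{t-s}^N(f(O^N_s) - f(O^N_{k_M(s)}))\,ds$.

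For this last and decisive term the plan is to apply the stochastic sewing lemma, following the strategy of \cite{BDG, BDG-SPDE} combined with the "distributional power counting" of \cite{GS}. One defines, for $s \le t$ on the partition, the germ $A_{s,t} := \int_s^t P_{\cdot - r}^N \E^s[f(O^N_r) - f(O^N_{k_M(r)})]\,dr$ (conditional expectation given $\F_s$), checks the two stochastic sewing bounds on $\E^s[A_{s,t}]$ and $\delta A_{s,u,t} = A_{s,t} - A_{s,u} - A_{u,t}$ in the relevant negative-regularity norm $\calC^{-1/2}$, and crucially uses the extra smoothing factor $P_{t-r}^N$ — which maps $\calC^{-1/2}$ to $L^\infty$ with a gain of $(t-r)^{-1/4}$ — to convert the distributional gain (the $1/2$-in-time regularity of $O$ measured in $\calC^{-1/2}$, cf.\ \eqref{eq:O-lesstriv}) into a genuine $L^\infty$ bound without losing the rate. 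The conditional expectation kills the "martingale part" of the increment $f(O^N_r) - f(O^N_{k_M(r)})$ and leaves a drift-type term on which one can integrate by parts / use the Gaussian density of $O^N_r - O^N_{k_M(r)}$ (whose variance is of order $(r - k_M(r))^{1/2} \sim h^{1/2}$) to pick up the decisive extra powers of $h$; combined over the $\sim M$ intervals this yields the rate $M^{-1+\epsilon}$. The main obstacle, as the introduction itself flags, is precisely the bookkeeping in this step: one must simultaneously (i) exploit the heat smoothing $P_{t-r}^N$ near the singularity $r \to t$, (ii) exploit the temporal regularity of $O^N$ only in the weak norm $\calC^{-1/2}$ where $f$ is \emph{not} Lipschitz, so that the nonlinearity must be handled via conditional Gaussian computations rather than a Lipschitz estimate, and (iii) make the two sewing exponents add up to strictly more than $1$ (in fact to $3/2 - \epsilon$ before the $P$-smoothing eats $1/2$ of it, landing at $1 - \epsilon$), all uniformly in $N$. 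Once this central estimate is in place, assembling the spatial and temporal pieces and running the two Gronwall arguments gives the claimed bound $C(N^{-1/2+\epsilon} + M^{-1+\epsilon})$ in $L^p(\Omega; C_T L^\infty)$.
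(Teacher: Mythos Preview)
Your reduction from $U^N$ to $O^N$ in the temporal error does not work as stated. Writing $U^N = O^N + R^N$ with $R^N$ almost Lipschitz in time, the difference $[f(U^N_s) - f(U^N_{k_M(s)})] - [f(O^N_s) - f(O^N_{k_M(s)})]$ contains, after Taylor expansion, a term of the form $R^N_{k_M(s)} \cdot \big(f'(O^N_s + \cdots) - f'(O^N_{k_M(s)}+\cdots)\big)$, which is only $O(M^{-1/4+\eps})$ in $L^\infty$ since $R^N = O(1)$ and the $L^\infty$ time regularity of $O^N$ is only $1/4$. So you have not reduced the problem but produced a second term of the same difficulty. The paper takes a different route: it introduces auxiliary processes $\tilde{U}^N$, $\tilde{V}^{M,N}$ driven by the full $O$ rather than $O^N$, and then uses Girsanov's theorem (this is where boundedness of $f$ is decisive, not for a priori bounds as you suggest) to pass to a measure under which the law of $\tilde{U}^N$ \emph{is} that of $O + P^N_\cdot u_0$. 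Under that measure the crucial integral is literally the OU one, and the bound transfers back by Cauchy--Schwarz against the density. (Section~\ref{sec:poly-f} does use a direct decomposition of your flavour, but then the germ must carry $\E_s R^{M,N}_r$ and the second sewing condition $\E_s[\delta A_{sut}]$ is no longer zero; that argument is substantially longer.)

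Your description of the sewing step also misses the mechanism that actually produces rate $1$. With the germ $A_{st} = \E_s \int_s^t \Delta_j P^N_{R-r}[\ldots]\,dr$ one has $\E_s[\delta A_{sut}] = 0$, so only $\|A_{ut}\|_{L^p}$ needs work. The key is \emph{not} an integration-by-parts against the Gaussian density of the increment $O_r - O_{k_M(r)}$: as the introduction already warns, that regularisation-by-noise argument alone tops out well below rate $1$. Instead, for $|t-u|\gg h$ one computes $\E_u[f(\tilde{O}_r)] = (P^{\R}_{Q(r-u)}f)(P_{r-u}\tilde{O}_u)$ explicitly, and the decisive estimate is on the conditioned increment $\|(P_{r-u} - P_{k_M(r)-u})\tilde{O}_u\|_{\calC^{-1/2+\eps}} \lesssim (r-k_M(r))^{1-2\eps}(k_M(r)-u)^{-1/2+\eps}$ (this is \eqref{eq:s-t-bound} with $\theta$ close to $1$). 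This trades an integrable singularity in $r-u$ for a \emph{full} power of $h$; combined with the heat smoothing $P^N_{R-r}:\calC^{-1/2}\to L^\infty$ costing $(R-r)^{-1/4-\eps}$ and the product estimate in $\calC^{-1/2}$, one obtains $\|A_{ut}\|_{L^p} \lesssim 2^{-j\eps} M^{-1+2\eps}(R-t)^{-1/4-\eps/2}(t-u)^{1/2+\eps}$, and the weighted sewing lemma concludes.
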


The proof of \cref{thm:main} is given in \cref{sec:bounded-f}.

In the second half of this article, we allow for superlinearily growing nonlinearities $f$ that satisfy a one-sided Lipschitz condition. A prime example is the Allen-Cahn nonlinearity $f(x)=x-x^3$. 
We now give the set up for the more general formulation of the article, which is substantially more technical. We start by the assumption on the nonlinearity.
\begin{assumption}\label{ass:f}
\begin{enumerate}
    \item[(a)] 
    There exists a constant $K\geq 1$ and $m\geq 0$ such that for any $i=0,1,2,3$ and all $x\in\R$ one has
\begin{align}\label{eq:f-ass1}
    |\partial^i f(x)|\leq K(1+|x|^{(2m+1-i)\vee 0}),
\end{align}
with the convention that $\partial^0f=f$, and furthermore
for all $x\in\R$ one has
\begin{align}\label{eq:f-ass2}
    \partial f(x)\leq K.
\end{align}
\item[(b)] The initial condition $u_0$ is an $\F_0$-measurable random variable with values in $\calC^{1/2}(\T)$ and for any $p\in[1,\infty)$ there exists a constant $\cM(p)$ such that $\E\|u_{0}\|_{\calC^{1/2}}^p\leq\cM(p)$.
\end{enumerate}
\end{assumption}
\begin{remark}
    \cref{ass:f} implies a local Lipschitz bound with polynomial growth and a global one-sided Lipschitz bound. That is, for all $x,y\in\R$ one has 
    \begin{align}
        |f(x)-f(y)|&\leq K(1+|x|^{2m}+|y|^{2m})|x-y|,
    \\
    (x-y)(f(x)-f(y))&\leq K |x-y|^2.
    \end{align}
\end{remark}

\begin{remark}
The assumption on the initial condition $u_0\in\calC^{1/2}$ in \cref{asn:easier} (b) and \cref{ass:f} (b) is minimal (up to $\epsilon$) in the sense that it guarantees a spatial rate $N^{-1/2+\epsilon}$ for any $\epsilon>0$ for the term related to the initial condition, while a more irregular initial condition would decrease the spatial rate. Indeed, we have that $\norm{u_0-\Pi_{N}u_{0}}_{L^{\infty}}\lesssim N^{-\alpha+\epsilon}\norm{u_0}_{\calC^{\alpha}}$ for any $\alpha\geq 0$ and $\epsilon>0$.
\end{remark}

For standard Euler or (accelerated) exponential Euler schemes for SPDEs with superlinearily growing coefficients, a priori estimates are known to fail (cf.~\cite{Hutzenthaler}). 
For our analysis, we consider the following splitting scheme: $X^{M,N}_{0}=\Pi_{N}u_{0}=u_{0}^{N}$ and
\begin{align}\label{eq:splitting}
    Y_{t_{k}}^{M,N}&=\Phi_{h}(X_{t_{k}}^{M,N})\\\nonumber
    X_{t_{k+1}}^{M,N}&=P_{h}^{N}Y_{t_{k}}^{M,N}+O^{N}_{t_{k+1}}-P_{h}^{N}O^{N}_{t_{k}}
\end{align}
for $k=0,\dots, M-1$ and $h=T/M$ and where $\Phi_t (z)$ solves  
\begin{align}\label{eq:Phi}
    \partial_t\Phi(z)=f(\Phi(z)),\quad \Phi_0(z)=z
\end{align} 
and $O^{N}$ is the truncated Ornstein-Uhlenbeck process defined in \eqref{eq:discrete-OU}.
\begin{remark}
Often the ODE \eqref{eq:Phi} admits an explicit solution. For example in the Allen-Cahn case $f(x)=x-x^3$ one has
\begin{align}
    \Phi_t(z)=\mathrm{sgn}(z)\frac{e^t}{\sqrt{z^{-2}-1+e^{2t}}}.
\end{align}
\end{remark}
This scheme corresponds to the semi-discrete splitting scheme considered in \cite{BrehierJH18}. 
One can rewrite the scheme as a classical Euler scheme for an auxiliary SPDE.
To that aim, define the auxiliary function 
\begin{align}\label{eq:gh}
    g_{t}(z)=\frac{\Phi_{t}(z)-z}{t},\quad t>0,\quad g_{0}(z):=f(z).
\end{align}
Using the definition of $g_{h}$, $X^{M,N}$ can equivalently be written in the form
\begin{align}\label{eq:approx-discr}
    X^{M,N}_{t_{k+1}}=P_{h}^{N}X^{M,N}_{t_{k}}+hP_{h}^{N}g_{h}(X^{M,N}_{t_{k}})+O^{N}_{t_{k+1}}-P^{N}_{h}O^{N}_{t_{k}}.
\end{align}
Equivalently, we can write a ``mild'' version of the approximation $X^{M,N}$ and extend it to arbitrary points $t\in[0,T]$ as
\begin{align}\label{eq:num-approx-cont}
    X_{t}^{M,N}=P_{t}^{N}u_{0}+\int_{0}^{t}P_{t-k_{M}(s)}^{N}g_{h}(X^{M,N}_{k_{M}(s)})ds+O_{t}^{N},
\end{align}
which agrees with the inductive form \eqref{eq:splitting} on the time grid points $t=t_{k}$, $k=0,\ldots,M$.

\begin{theorem} \label{thm:main2}
Let \cref{ass:f} hold.
    Let $u$ be the unique mild solution of \eqref{eq:SPDE} and for any $M,N\in\N$, let $X^{M,N}$ be as above.
    Then for any $\epsilon>0$ and $p\in[1,\infty)$ there exists a constant $C=C(T,\epsilon, p, K,\cM)$
    such that 
   \begin{align}
      \big(\E\sup_{t\in[0,T]}\norm{u_{t}-X^{M,N}_{t}}_{L^{2}(\mathbb{T})}^p\big)^{1/p}\leq C \big(N^{-1/2+\epsilon}+M^{-1+\epsilon}\big).
   \end{align}
\end{theorem}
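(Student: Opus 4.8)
The plan is to run a fixed-point/Gr\"onwall argument on the error $e^{M,N}:=u-X^{M,N}$ in $C_T L^2(\T)$, after first establishing uniform-in-$(M,N)$ moment bounds. So the first step is a priori estimates: using \eqref{eq:f-ass2} (global one-sided Lipschitz) together with the monotonicity of the flow $\Phi$, one shows $\sup_{M,N}\E\sup_{t\le T}\|X^{M,N}_t\|_{L^p}^q<\infty$ for all $p,q$, and likewise for $u$ via \cref{prop:wp}. The key point is that the splitting scheme inherits the dissipativity of the ODE part, so the polynomially-growing $g_h$ does not cause blow-up; one tests the equation for $Y^{M,N}$ against itself and uses that $z\cdot f(z)\le K(1+z^2)$ to get a discrete Gr\"onwall. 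One also needs bounds on $O^N$ and $O-O^N$ in H\"older–Besov spaces $C_T\calC^{1/2-\eps}$ and on the one-step increments — these are classical Gaussian computations giving the spatial rate $N^{-1/2+\eps}$ and temporal Kolmogorov-type bounds. A further ingredient is control of $g_h-f$: since $\Phi_t(z)-z-tf(z)=\int_0^t(f(\Phi_s(z))-f(z))ds$ is $O(t^2)$ locally, one gets $\|g_h(z)-f(z)\|\lesssim h(1+|z|^{4m+1})$, which already contributes an $O(h)=O(M^{-1})$ error term once the moment bounds are in place.

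The second, and main, step is the estimate of the ``discretisation defect'' that drives the temporal rate, namely the analogue of $E_M$ in \eqref{eq:EM}:
\begin{align*}
\Big\|\int_0^t P_{t-s}^N\big(f(u_s)-f(X^{M,N}_{k_M(s)})\big)\,ds - \int_0^t P^N_{t-k_M(s)}\big(g_h(X^{M,N}_{k_M(s)})-f(X^{M,N}_{k_M(s)})\big)\,ds\Big\|_{L^2}.
\end{align*}
Splitting $f(u_s)-f(X^{M,N}_{k_M(s)}) = [f(u_s)-f(u_{k_M(s)})] + [f(u_{k_M(s)})-f(X^{M,N}_{k_M(s)})]$, the second bracket is handled by the local Lipschitz bound plus moment bounds and feeds into Gr\"onwall (giving $\|e^{M,N}_{k_M(s)}\|$), while the first bracket is the genuinely hard term
$$
\mathcal{E}_M:=\int_0^t P_{t-s}^N\big(f(u_s)-f(u_{k_M(s)})\big)\,ds.
$$
Here the strategy outlined in the introduction must be implemented: one does \emph{not} apply the triangle inequality, but instead uses stochastic sewing (in the spirit of \cite{BDG}, \cite{BDG-SPDE}) combined with ``distributional power counting''. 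Concretely, write $u_s = O_s + (\text{remainder})$ where the remainder is more regular, and for the $O$-part use that for $s$ in a dyadic block $[S,S+\tau]$, the conditional expectation $\E[f(O_s)-f(O_{k_M(s)})\mid\F_S]$ enjoys cancellations: one gains a factor from the difference $O_s-O_{k_M(s)}$ (of size $\sim h^{1/4}$ in $L^\infty$, but crucially of size $\sim h^{1/2-\eps}$ in $\calC^{-1/2}$, cf.~\eqref{eq:O-lesstriv}), and the smoothing $P_{t-s}$ lets one measure $f(O_s)-f(O_{k_M(s)})$ in a negative-regularity space $\calC^{-1/2-\eps}$ at the cost of $(t-s)^{-1/4-\eps/2}$, which is integrable. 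The two stochastic sewing bounds — the conditional-expectation bound giving the ``deterministic'' rate and the $L^p$-norm bound on the martingale part — together should yield $\|\mathcal{E}_M\|_{C_TL^2}\lesssim M^{-1+\eps}$.

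The third step assembles everything: subtracting \eqref{eq:mild} from \eqref{eq:num-approx-cont}, one has
$$
e^{M,N}_t = (P_t-P_t^N)u_0 + \int_0^t P_{t-s}^N\big(f(u_s)-f(X^{M,N}_{k_M(s)})\big)\,ds - \int_0^t P^N_{t-k_M(s)}\big(g_h-f\big)(X^{M,N}_{k_M(s)})\,ds + (O_t-O^N_t) + \int_0^t (P_{t-s}-P^N_{t-s})f(u_s)\,ds,
$$
where the first, fourth and fifth terms are $O(N^{-1/2+\eps})$ by the Gaussian/semigroup estimates and moment bounds, the $(g_h-f)$ term is $O(M^{-1+\eps})$ by step one, the $\mathcal{E}_M$ part of the remaining integral is $O(M^{-1+\eps})$ by step two, and the $f(u_{k_M(s)})-f(X^{M,N}_{k_M(s)})$ part is bounded by $\int_0^t \|P^N_{t-s}\|_{L^2\to L^2}\, (1+\text{moments})\,\|e^{M,N}_{k_M(s)}\|_{L^2}\,ds$. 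A Gr\"onwall argument (after taking $\sup_t$ and $L^p(\Omega)$, using Minkowski and the stochastic-sewing $L^p$ estimates throughout, and a standard argument to move the $\sup$ inside) then closes the estimate and gives the claimed bound.

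The main obstacle is unquestionably step two: making the stochastic sewing machinery coexist with the spatial regularity juggling. One must set up the right two-parameter germ $(A_{S,\tau})$, verify the sewing hypotheses with the correct exponents — in particular that the gain from the negative-norm estimate on $O_s-O_{k_M(s)}$ (exponent $1/2-\eps$ in $\calC^{-1/2}$) combined with the $1/4$-blowup from $P_{t-s}$ really integrates to an exponent strictly above $1/2$ in the sewing lemma, pushing the final rate from $1/2$ up to $1$ — and to deal with the superlinear growth of $f$ (so one cannot use $f\in C_c^\infty$ bounds but must carry polynomial-moment weights through the sewing estimates, using \eqref{eq:f-ass1} for $\partial^i f$, $i\le 3$, and the a priori moment bounds on $u$ and $O$). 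Handling the nonlinearity composed with $u$ rather than $O$ (i.e. the remainder term $u-O$, which lies in a better space but is not Gaussian) inside the sewing argument is an additional technical point that will likely require conditioning and a perturbative expansion of $f$ around $O$.
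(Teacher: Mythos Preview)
Your proposal contains a genuine gap in the buckling step. You write that the term $f(u_{k_M(s)})-f(X^{M,N}_{k_M(s)})$ is ``bounded by $\int_0^t \|P^N_{t-s}\|_{L^2\to L^2}\,(1+\text{moments})\,\|e^{M,N}_{k_M(s)}\|_{L^2}\,ds$'' and then close by Gr\"onwall. But this is a \emph{mild-form} Gr\"onwall with a \emph{random} prefactor of the form $1+\|u\|_{L^\infty}^{2m}+\|X^{M,N}\|_{L^\infty}^{2m}$. Pathwise Gr\"onwall then produces a factor $\exp\big(C\int_0^T(1+\|u_s\|_{L^\infty}^{2m}+\|X^{M,N}_s\|_{L^\infty}^{2m})\,ds\big)$, and the exponential of a polynomial of degree $2m\ge 2$ in a Gaussian-driven process does \emph{not} have finite moments. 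So the estimate cannot be closed in $L^p(\Omega)$ this way. This is precisely why the paper abandons the mild formulation for the buckling and switches to the \emph{variational/energy} inequality (cf.\ \cref{rem:weak}): testing the equation for the difference against itself one can use the \emph{one-sided} Lipschitz bound $\langle g_h(a)-g_h(b),a-b\rangle\le K\|a-b\|_{L^2}^2$ directly, and the random polynomial prefactor never enters the Gr\"onwall loop. You do not mention this mechanism.

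There are two further, related deviations from the paper's proof that you should be aware of. First, the paper does not compare $u$ and $X^{M,N}$ directly but inserts two auxiliary processes $X^h$ (mild solution of the SPDE with nonlinearity $g_h$) and its Galerkin approximation $X^{h,N}$; the three resulting errors (\cref{lem:a,lem:b,lem:c}) are each handled by an energy estimate, and only the last one carries the sewing argument. Second, the stochastic sewing is applied not to $u$ but to the \emph{numerical} process $X^{M,N}=R^{M,N}+O^N$, with the germ $A_{st}=\int_s^t\E_s\big[\Delta_jP^N_{R-r}(g_h(\E_sR^{M,N}_r+O^N_r)-g_h(\E_sR^{M,N}_{k_M(r)}+O^N_{k_M(r)}))\big]\,dr$. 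Because the remainder $R^{M,N}$ is not Gaussian and not $\F_s$-measurable at time $r>s$, one must freeze it at its conditional expectation $\E_sR^{M,N}_r$, and then $\E_s[\delta A_{sut}]$ is \emph{not} zero (unlike in the bounded case, where Girsanov reduces to a pure OU process). Verifying the second sewing condition \eqref{eq:SSL-cond2} therefore requires an additional second-order Taylor expansion of $g_h$ and the regularity bound $\|\E_sR^{M,N}_r-\E_uR^{M,N}_r\|_{L^\infty}\lesssim|r-s|^{1-\eps}$ coming from \cref{cor:apriori}. Your sketch of ``conditioning and a perturbative expansion of $f$ around $O$'' is in the right spirit, but the actual argument is more delicate than you indicate and is carried out on $X^{M,N}$ rather than $u$ (since no Girsanov transform is available for superlinear $f$). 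Finally, your mild decomposition in step three omits the term $\int_0^t[P^N_{t-k_M(s)}-P^N_{t-s}]f(X^{M,N}_{k_M(s)})\,ds$; this is easy to bound (cf.\ \eqref{eq:R-easy}) but should be accounted for.
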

The proof of \cref{thm:main2} is given in \cref{sec:poly-f}.
\begin{remark}
    Let $f$ satisfy  \cref{ass:f} (a) with $m=0$.
    In this case $f$ is globally Lipschitz continuous with at most linear growth.
    It is plausible to expect that \cref{thm:main} extends to this case and the splitting scheme is not necessary.
\end{remark}

Next, we show optimality (up to $\epsilon$) of our rates from \cref{thm:main} and \cref{thm:main2}. To that aim, we prove lower bounds. 
To formulate a lower bound, given a set $I\subset\R$, denote
    \begin{equ}
        \mathcal{G}_{M,I}=\sigma(\hat O_{t_k}(n):\,k=0,\ldots,M, n\in I\cap \Z)
    \end{equ}
    for the Fourier transform $\hat O_t=\mathcal{F}(O_t)$.
The scheme $V^{M,N}$, respectively $X^{M,N}$, is $\mathcal{G}_{M,[-N,N]}$-measurable. We now show that no scheme (no matter how computationally expensive) based on the OU increments can achieve higher rates.
\begin{proposition}\label{prop:lb}
    Let $u$ be the solution of \eqref{eq:SPDE} with $u_0=0$ and $f(u)=u$ and $T=1$. 
    Then for sufficiently large $M,N$, one has the lower bound
    \begin{equs}
   & \inf_{Z:\,\mathcal{G}_{M,[-N,N]}\mathrm{-measurable}}\big\|\|u_1-Z\|_{L^2(\T)}\big\|_{L^2(\Omega)}
    \\&\quad=    \big\|\|u_1-\E(u_1|\mathcal{G}_{M,[-N,N]})\|_{L^2(\T)}\big\|_{L^2(\Omega)}\geq(M^{-1}+N^{-1/2})/30.
    \end{equs}
\end{proposition}
\begin{proof}
Let us write $u_1=\sum_{n\in\Z}\hat u_1(n)e_n$ for the Fourier basis $(e_n)$. For any $n\neq \ell,-\ell$, $(\hat u(n),\hat O(n))$ is independent of $(\hat u(\ell),\hat O(\ell))$. In particular, one has $\E(\hat u_1(n)|\mathcal{G}_{M,[N,N]})=\mathbf{1}_{|n|\leq N}\E(\hat u_1(n)|\mathcal{G}_{M,\{-n,n\}})$ and that for nonnegative $n\neq \ell$,  $(\hat u(n),\E(\hat u(n)|\mathcal G_{M,[-N,N]})$ is independent of $(\hat u(\ell),\E(\hat u(\ell)|\mathcal G_{M,[-N,N]})$. Using furthermore the orthogonality of $(e_n)$, one has
\begin{align*}
    \big\|\|u_1-&\E(u_1|\mathcal{G}_{M,[N,N]})\|_{L^2(\T)}\big\|_{L^2(\Omega)}^2=\sum_{n\in\Z}\big\|\hat u_1(n)-\E(\hat u_1(n)|\mathcal G_{M,[-N,N]})\big\|_{L^2(\Omega)}^2
    \\
    &\geq\|\hat u_1(0)-\E(\hat u_1(0)|\mathcal G_{M,\{0\}})\big\|_{L^2(\Omega)}^2+\sum_{n>N}\big\|\hat u_1(n)\|_{L^2(\Omega)}^2.
\end{align*}
Note that the first term above is the best approximation error of an Ornstein-Uhlenbeck process with unit drift, that is $\hat u_1(0)$, given the samples of the driving Wiener process, that is $(\hat O_{t_k}(0))_{k=0,\dots, M}$. By \cite[Theorem 1]{CC} this term is lower bounded by $M^{-2}/12+o(M^{-2})$. As for the second term, since
\begin{equ}
    \big\|\hat u_1(n)\|_{L^2(\Omega)}^2=\int_0^1 e^{8\pi^2 (-n^2+1) t}\,dt\geq \frac{1}{8e\pi^2n^2},
\end{equ}
the sum is lower bounded by $N^{-1}/(8e\pi^2)+o(N^{-1}).$ By the elementary inequality $\sqrt{(a^2+b^2)/2}\geq (a+b)/2$ the claim follows.
\end{proof}

We end this section by providing numerical evidence for the temporal rate $M^{-1}$, that is given in Figure \ref{fig:figure1}. The figure shows a log-log plot of the squared temporal error for the splitting scheme $X^{N,M}$ of the Allen Cahn equation where $f(x)=x-x^3$. We fixed the spatial discretization parameter to be $N=2^{14} -1$. To compute the temporal error, we compared $X^{N,M}$ for $M=2^{k}$, $k=0,\dots,12$ with $X^{N,M_{\text{finest}}}$ for $M_{\text{finest}}=2^{13}$. The slope of the black line, which shows the averaged squared $L^2([-1,1])$-error among the $150$ samples, is approximately equal to $-2$ for large $M$, as suggested by the temporal rate $M^{-1}$.
\begin{figure}[h]
\begin{center}
\includegraphics[scale=0.42]{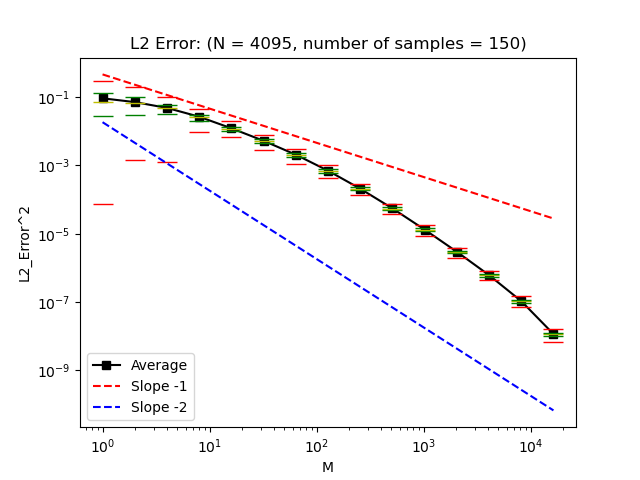}
\caption{}
\label{fig:figure1}
\end{center}
\end{figure}

\section{Preliminaries}\label{sec:prelim}

We introduce the Besov spaces as follows. Let $(\rho_{j})_{j\geq -1}$ be a smooth dyadic partition of unity, i.e. a family of functions $\rho_{j}\in C^{\infty}_{c}(\R)$ for $j\geq -1$, such that 
\begin{itemize}
\item $\rho_{-1}$ and $\rho_{0}$ are non-negative even functions such that the support of $\rho_{-1}$ is contained $B_{1/2}$, the ball of radius $1/2$ around $0$, and the support of $\rho_{0}$ is contained in $B_1\setminus B_{1/4}$;
\item $\rho_{j}(x)=\rho_{0}(2^{-j}x)$, $x\in\R$, $j\geq 0$;
\item $\sum_{j=-1}^{\infty}\rho_{j}(x)=1$ for every $x\in\R$;
\item $\operatorname{supp}(\rho_{i})\cap \operatorname{supp}(\rho_{j})=\emptyset$ for all $\abs{i-j}>1$.
\end{itemize}
The existence of such a partition of unity is classical (see e.g. \cite{Bahouri2011}).
We denote by $\mathcal{S}'(\mathbb{T})$ the space of Schwartz distributions on the torus (i.e. the dual of  $\mathcal{S}(\mathbb{T}):=C^\infty(\T)$).
Note that for any $u\in \mathcal{S}'(\T)$ its Fourier transform is meaningful, and therefore one can define the operators (also known as Littlewood-Paley blocks), $j\geq -1$, 
\begin{align}
    \Delta_j:\mathcal{S}'(\T)\to C^\infty(\T)\qquad \Delta_j u=\mathcal{F}^{-1}\big(k\mapsto \rho_{j}(k)\mathcal{F}(u)(k)\big).
\end{align}
We then define the Besov spaces on the torus for $p,q\in [1,\infty]$, $\theta\in\R$
\begin{align}\label{def:bs}
B^{\theta}_{p,q}:=\big\{ u\in\mathcal{S}'(\mathbb{T}):\norm{u}_{B^{\theta}_{p,q}}=\norm{(2^{j\theta}\norm{\Delta_{j}u}_{L^{p}(\T)})_{j\geq -1}}_{\ell^{q}}<\infty\big\}.
\end{align}
We introduce the shorthand $\calC^{\theta}:=B^\theta_{\infty,\infty}$ for $\theta\in\R$.
We collect the relevant properties of Besov spaces below.
\begin{itemize}
    \item (H\"older spaces) If $\theta\in(0,1)$, then $\calC^{\theta}$ coincides with the space of $\theta$-H\"older continuous functions, (cf.~\cite[Sec. 2.7, Examples]{Bahouri2011}). 
    \item (Embedding) If $1\leq p_{1}\leq p_{2}\leq \infty$, $1\leq q_{1}\leq q_{2}\leq\infty$ and $\theta\in\R$, then $B^{\theta}_{p_{1},q_{1}}$ is continuously embedded in $B_{p_{2},q_{2}}^{\theta-d(1/p_{1}-1/p_{2})}$ (cf. \cite[Proposition 2.71]{Bahouri2011}).
    \item (Derivatives) If $u\in B^{\theta}_{p,q}$ and $n\in\N$, $\norm{\partial^{n}u}_{B^{s-n}_{p,q}}\leq \norm{u}_{B^{s}_{p,q}}$.
    \item (Products) If $\theta,\beta$ are such that  $\theta + \beta > 0$, then for any two distributions $u\in\calC^{\theta}$ and $v\in\calC^{\beta}$ their product $uv$ is well-defined and
there exists a constant  $C=C(\theta,\beta)$, such that the bound
$$ \Vert uv \Vert_{\mathcal{C}^{\min(\theta,\beta)}}\leq C \Vert u\Vert_{\mathcal{C}^{\theta}}\Vert v\Vert_{\mathcal{C}^{\beta}}
$$
holds    (cf.~\cite[Section 2]{Bahouri2011}).
\item (Heat kernel bounds) If $\theta\geq 0$, $\tilde\theta\in [0,2]$, then there exist constants $C_1(\theta)$ and $C_2(\tilde \theta)$ such that for any $\beta\in \R$, $u\in\mathcal{C}^\beta$,  and $t\in(0,1]$, the bounds
\begin{align}\label{eq:HK-1}
     \norm{P_{t}u}_{\calC^{\beta+\theta}}\leq C_1 t^{-\theta/2}\norm{u}_{\calC^{\beta}},\quad \norm{(\operatorname{Id}-P_{t})u}_{\calC^{\beta-\tilde\theta}}\leq C_2 t^{\tilde \theta/2}\norm{u}_{\calC^{\beta}}
\end{align}
hold (cf. \cite[Lemma A.7, A.8]{GIP}).

\end{itemize}

For function spaces on $\R$, we only need the simple notion of $C^k_b$ for $k=0,1,\ldots$, denoting the space of bounded measurable functions whose distributional derivative up to order $k$ are essentially bounded, equipped with the canonical norm (note in particular that elements of $C^0_b$ are not assumed to be continuous).
Moreover denote by $(P_{t}^{\mathbb{R}})_{t\geq 0}$ the heat semigroup on $\mathbb{R}$, that is 
\begin{align}
    P_{t}^{\mathbb{R}}f=p_{t}^{\R}\ast f,\qquad\qquad p_{t}^{\R}(x)=\frac{1}{\sqrt{2\pi t}}e^{-x^2/2t}.
\end{align}
The following estimate is rather immediate: for any $0\leq s\leq t\leq 1$ one has
\begin{align}\label{eq:HK-2}
    \norm{(P_{t}^\R-P_s^\R)u}_{C^0_b}\leq |t-s|\norm{u}_{C^2_b}.
\end{align}

For a Banach space $X$, $C_{T}X$ denotes the space of continuous functions in time with values in $X$ equipped with the supremum norm. For $\gamma\in (0,1]$, define $$C_{T}^{\gamma}X : =\big\{u\in C_TX:\, \norm{u}_{C_{T}^{\gamma}X}:=\sup_{t\in[0,T]}\norm{u_{t}}_{X}+\sup_{0\leq s<t\leq T}\frac{\norm{u_{t}-u_{s}}_{X}}{(t-s)^{\gamma}}<\infty\big\}.$$
Below we use the notation $a\lesssim b$ if there exists a constant $C>0$, such that $a\leq Cb$.
The dependence of the constant $C$ will be clear from the context.
If we want to stress dependence of the constant $C$ on a parameter $\tau$, we write $a\lesssim_{\tau}b$.

We collect a number of simple tools, starting with the proof of Proposition \ref{prop:JK-bad}.
\begin{proof}[Proof of Proposition \ref{prop:JK-bad}]
Note that if $f$ is affine, then \cref{asn:JK} is clearly satisfied. To prove the converse, with the notation $H^s=B^s_{2,2}$ notice that letting $z=(1-\Delta)v\in L^{2}$, \cref{asn:JK} implies that 
    \begin{align}\label{eq:JK-cond}
    \norm{F'(u)z}_{H^{-2}}\leq L\norm{z}_{H^{-2}},\quad \forall z,u\in L^{2}.
    \end{align}
    Note that the Gateaux derivative of $F:u\mapsto f(u)$ is $F':u\mapsto \big(z\mapsto f'(u)z\big)$, so $F'(u)z$ is simply the product $f'(u)z$ \cite[Theorem~2.7]{AP93}.

    We claim that \eqref{eq:JK-cond} implies that $f'$ is constant.
    Assume the contrary and let $a,b$ such that $f'(a)\neq f'(b)$. Set $u=\mathbf{1}_{[0,1/2]}a+\mathbf{1}_{(1/2,1)}b\in L^{2}(\mathbb{T})$, where we identify the torus $\mathbb{T}$ with $[0,1)$ with periodic boundary conditions. Clearly, $h:=f'(u)\notin H^1$.
Notice that for every $k\in\Z$, $z_k:=k^2 e_{k}$ has norm less than $1$ in $H^{-2}$. Therefore \eqref{eq:JK-cond} and $\abs{\langle hz_k,1\rangle}\leq\norm{hz_k}_{H^{-2}}=\sup_{\phi\,:\,\norm{\phi}_{H^{2}}=1}\abs{\langle hz_k,\phi\rangle}$ imply that
\begin{align}
    \|\partial h\|_{L^2}^2=\sum_{k\in\Z}|\langle h,e_k\rangle|^2k^{2}=\sum_{0\neq k\in\Z}|\langle h,k^{2}e_k\rangle|^2k^{-2}\leq L\sum_{0\neq k\in Z}k^{-2}<\infty,
\end{align}
which is a contradiction.
\end{proof}

\begin{proposition}\label{prop:ou}
   Let $(O_{t})_{t\geq 0}$ be as in \eqref{eq:OU}. 
     Let $p\in[1,\infty)$, $\lambda\in (0,1)$ and $\epsilon\in (0,1/2)$. Then there exists a constant $C=C(p,T,\lambda,\epsilon)$, such that for all $0\leq s\leq t\leq T$ one has
    \begin{align}\label{eq:ou-moment-bounds}
        \E\norm{O_{t}-O_{s}}_{\calC^{1/2-\lambda-\epsilon}}^p\leq C (t-s)^{p\lambda/2}.
    \end{align}
    Furthermore, one has
    \begin{align}\label{eq:ou-reg}
        \E\norm{O}_{C_{T}^{\lambda/2}\calC^{1/2-\lambda-\epsilon}}^{p}\leq C.
    \end{align}
Furthermore if $\theta\in[0,1]$, then there exists a constant $C=C(p,T,\theta,\epsilon)$ such that  for all $s,t\in[0,T]$ one has
    \begin{align}\label{eq:s-t-bound}
        \paren{\E\sup_{r\in[0,T]}\norm{(P_{t+s}-P_{t})O_{r}}_{\calC^{-1/2 +\epsilon}}^{p}}^{1/p}\leq C t^{-\theta/2-\epsilon}s^{\frac{1+\theta}{2}}.
    \end{align}
\end{proposition}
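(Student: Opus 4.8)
The plan is to prove \cref{prop:ou} by working mode-by-mode in Fourier space and then summing up, using the explicit Gaussian structure of $O$. Recall that $O_t=\sum_{k\in\Z}O_t(k)e_k$ where each $O_t(k)=\int_0^t e^{-4\pi^2k^2(t-r)}\,d\beta_r^k$ for independent (complex) Brownian motions $\beta^k$, and $\E|O_t(k)|^2=\frac{1-e^{-8\pi^2k^2t}}{8\pi^2k^2}\lesssim (1+k^2)^{-1}$. Since $O_t-O_s$ is Gaussian, all $L^p(\Omega)$ norms are comparable to the $L^2(\Omega)$ norm, so by the Littlewood-Paley characterisation \eqref{def:bs} of $\calC^\theta=B^\theta_{\infty,\infty}$ (and the embedding $B^{\theta+\kappa}_{p,p}\hookrightarrow B^\theta_{\infty,\infty}$ for $p$ large, to trade the $\ell^\infty$-in-$j$ for an $\ell^p$-sum and the $L^\infty$-in-$x$ for an $L^p$-in-$x$) it suffices to bound $\E\|\Delta_j(O_t-O_s)\|_{L^2(\T)}^2=\sum_k \rho_j(k)^2\,\E|O_t(k)-O_s(k)|^2$ by $C2^{-2j(1/2-\lambda-\epsilon')}(t-s)^\lambda$ for each $j$.

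For \eqref{eq:ou-moment-bounds} the key one-mode estimate is $\E|O_t(k)-O_s(k)|^2\lesssim (1+k^2)^{-1}\min\big(1,\,k^2(t-s)\big)\lesssim (1+k^2)^{-1}\big(k^2(t-s)\big)^{\lambda}=(1+k^2)^{-(1-\lambda)}(t-s)^\lambda$, which uses both that the increment of the OU process over $[s,t]$ has the $\sqrt{t-s}$-scaling of a Brownian increment for the "new noise" part and the $\min(1,\cdot)$ saturation for the "old" part coming from the semigroup difference $e^{-4\pi^2k^2(t-s)}-1$. Restricting to $|k|\sim 2^j$ in the $j$-th block gives $\E\|\Delta_j(O_t-O_s)\|_{L^2}^2\lesssim 2^j\cdot 2^{-2j(1-\lambda)}(t-s)^\lambda=2^{-2j(1/2-\lambda)}(t-s)^\lambda$; absorbing the $\epsilon$ into the Besov embedding to make the $j$-sum converge gives \eqref{eq:ou-moment-bounds}. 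Then \eqref{eq:ou-reg} follows from \eqref{eq:ou-moment-bounds} by the Kolmogorov--Chentsov/Garsia--Rodemich--Rumsey continuity criterion in the time variable (taking $p$ large enough that $p\lambda/2>1$), exactly as for H\"older continuity of Gaussian processes; one should also note $\E\|O_0\|=0$ to control the pointwise-in-time sup, not just the seminorm.

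For the last estimate \eqref{eq:s-t-bound}, fix $r$ and again work mode-wise: $(P_{t+s}-P_t)O_r=\sum_k (e^{-4\pi^2k^2(t+s)}-e^{-4\pi^2k^2 t})O_r(k)e_k$, and the multiplier satisfies $|e^{-4\pi^2k^2(t+s)}-e^{-4\pi^2k^2 t}|=e^{-4\pi^2k^2t}|1-e^{-4\pi^2k^2 s}|\lesssim e^{-4\pi^2k^2 t}\min(1,k^2 s)\lesssim e^{-ck^2t}(k^2s)^{(1+\theta)/2}$, where I interpolate the trivial bound $1$ against $k^2s$ with exponent $(1+\theta)/2\in[1/2,1]$. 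Combined with $e^{-ck^2t}k^{1+\theta}\lesssim_{\theta,\epsilon} t^{-(1+\theta)/2-\epsilon}\cdot (1+k^2)^{-\epsilon}\cdot$(a factor extracting the polynomial decay I need), this yields per block $\E\|\Delta_j(P_{t+s}-P_t)O_r\|_{L^2}^2\lesssim 2^j\cdot (1+2^{2j})^{-1}\cdot e^{-c2^{2j}t}2^{2j(1+\theta)}s^{1+\theta}$, and the $e^{-c2^{2j}t}2^{2j(1+\theta+\cdots)}\lesssim t^{-(1+\theta)-2\epsilon}$ bookkeeping turns this into $\lesssim 2^{-2j(-1/2+2\epsilon)}t^{-\theta-2\epsilon}s^{1+\theta}$, i.e. control in $\calC^{-1/2+\epsilon}$ of order $t^{-\theta/2-\epsilon}s^{(1+\theta)/2}$ after taking square roots. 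The supremum over $r\in[0,T]$ is handled by first proving the bound for fixed $r$ with an extra time-increment estimate in $r$ (as in the proof of \eqref{eq:ou-reg}) and then invoking Kolmogorov again, or alternatively by noting $\sup_r\|(P_{t+s}-P_t)O_r\|_{\calC^{-1/2+\epsilon}}\le \|(P_{t+s}-P_t)\|_{\calC^{1/2-\lambda-\epsilon}\to\calC^{-1/2+\epsilon}}\sup_r\|O_r\|_{\calC^{1/2-\lambda-\epsilon}}$ and using \eqref{eq:ou-reg} together with the operator-norm bound on $P_{t+s}-P_t$ coming from \eqref{eq:HK-1}; the cleanest route is probably the latter, reducing \eqref{eq:s-t-bound} to a deterministic semigroup estimate plus \eqref{eq:ou-reg}.

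The main obstacle is the bookkeeping of the three competing small/large quantities in \eqref{eq:s-t-bound} — the $t$ near $0$ (where $P_{t+s}-P_t$ is large in high frequencies), the $s$ near $0$ (where the difference is small), and the high-frequency sum over $j$ — and in particular extracting a clean power $s^{(1+\theta)/2}$ uniformly in $\theta\in[0,1]$ while paying only $t^{-\theta/2-\epsilon}$; the parameter $\theta$ is exactly the interpolation parameter between "use $s$ fully, pay $t$ a lot" ($\theta=1$) and "use less $s$, pay less $t$" ($\theta=0$), so the inequality $|1-e^{-x}|\le \min(1,x)\le x^{(1+\theta)/2}$ for $x\in[0,\infty)$ only when $x\le 1$ — one has to split the $k$-sum at $k^2s\sim 1$ and check the large-$k$ tail is dominated thanks to the Gaussian factor $e^{-ck^2t}$, which is where care is needed. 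Everything else (Gaussian hypercontractivity, Besov embeddings, Kolmogorov continuity) is standard.
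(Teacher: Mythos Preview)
Your proposal is correct and follows essentially the same approach as the paper: a Littlewood--Paley block estimate via Gaussian hypercontractivity and It\^o isometry to get \eqref{eq:ou-moment-bounds}, then Kolmogorov for \eqref{eq:ou-reg}, and for \eqref{eq:s-t-bound} the paper takes exactly your ``cleanest route'' --- it is deduced in one line from \eqref{eq:ou-reg} combined with the two heat kernel bounds \eqref{eq:HK-1}. Your worry that $\min(1,x)\le x^{(1+\theta)/2}$ fails for $x>1$ is unfounded (it holds for all $x\ge 0$ whenever the exponent lies in $[0,1]$), but this is moot since neither you nor the paper pursue the direct mode-wise route for \eqref{eq:s-t-bound}.
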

\begin{proof}
Note that \eqref{eq:ou-reg} follows from \eqref{eq:ou-moment-bounds}, by virtue of Kolmogorov's continuity theorem (for a version that gives the correct exponent and constant, see e.g. Proposition \ref{prop:vKolmogorov} below with $S_t\equiv \mathrm{Id}$).
Indeed, let $\lambda\in (0,1)$ and $\epsilon\in (0,1/2)$ be arbitrary. Let $\delta\in (0,\epsilon/4)$ be such that $\lambda+2\delta\in (0,1)$ and let $q\geq p$ be large enough so that $\frac{1}{q}<\delta$ and therefore also $\frac{q(\lambda+2\delta)}{2}>1$.
Then take the bound \eqref{eq:ou-moment-bounds} with $\lambda+2\delta\in (0,1)$ in place of $\lambda$, $\epsilon/2\in (0,1/2)$ in place of $\epsilon$, and $q$ in place of $p$.
The obtained bound shows that the condition of Kolmogorov's continuity theorem (Proposition \ref{prop:vKolmogorov}) are satisfied with $X=O$, $V=\calC^{1/2-\lambda-2\delta-\epsilon/2}$, $q$ in place of $p$ therein, $S_t\equiv \mathrm{Id}$, and $\alpha= \frac{q(\lambda+2\delta)}{2}-1$. From the aforementioned properties of the exponents it follows that
 $\frac{\lambda}{2}\in (0,\frac{\alpha}{q})$ and therefore 
the bound \eqref{eq:Kolmogorov-conclusion} holds with the choice $\gamma=\lambda/2$ and $q$ in place of $p$. Using
  that $2\delta+\frac{\epsilon}{2}\leq\epsilon$ and $q\geq p$, this implies \eqref{eq:ou-reg} as written:
  % together with the embedding of Hölder spaces $\calC^{\theta_1}\hookrightarrow \calC^{\theta_2}$ for $0< \theta_2\leq \theta_1< 1$,
  $$\E\norm{O}_{C_{T}^{\lambda/2}\calC^{1/2-\lambda-\epsilon}}^{p}\leq\E\norm{O}_{C_{T}^{\lambda/2}\calC^{1/2-\lambda-2\delta-\epsilon/2}}^{p}\leq\E\norm{O}_{C_{T}^{\lambda/2}\calC^{1/2-\lambda-2\delta-\epsilon/2}}^{q}\leq C'' C.$$ 
  %for any $p\leq q$, that is \eqref{eq:ou-reg}.
% up to changing the constant $C$ (cf. for example \cite[Theorem 3.3]{DPZ})
 
Furthermore, \eqref{eq:s-t-bound} follows from \eqref{eq:ou-reg} and applying both bounds in \eqref{eq:HK-1}.

It remains to prove \eqref{eq:ou-moment-bounds}. We may and will assume that $p$ is sufficiently large. We then show that for any $j\geq -1$,
\begin{align}\label{eq:j-bound}
 \paren{\E\abs{\Delta_{j}O_{t}(x)-\Delta_{j}O_{s}(x)}^p}^{1/p}\lesssim 2^{j(-1/2+\lambda)}\abs{t-s}^{\lambda/2}.
\end{align}
Indeed, from \eqref{eq:j-bound}, raising to the $p$-th power, multiplying by $2^{pj(1/2-\lambda-\eps/2)}$, and summing over $j$, we get
    \begin{align}
        \E\norm{O_{t}-O_{s}}_{B^{1/2-\lambda-\eps/2}_{p,p}}^p\lesssim (t-s)^{p\lambda/2}.
    \end{align}
Choosing $p$ sufficiently large, \eqref{eq:ou-moment-bounds} follows by the Besov embedding $B_{p,p}^{\theta}\hookrightarrow \calC^{\theta-1/p}$, $\theta\in\R$.

It is left to prove \eqref{eq:j-bound}. Using Gaussian hypercontractivity (i.e. for a Gaussian random variable $Y$ we have that $(\E\abs{Y}^p)^{1/p}\leq C_{p}(\E\abs{Y}^2)^{1/2}$ for any $p\geq 1$), It\^o's isometry, and Parsevel's identity, we have that
\begin{align*}
    \paren{\E\abs{\Delta_{j}O_{t}(x)-\Delta_{j}O_{s}(x)}^p}^{2/p}&\lesssim \E\abs{\Delta_{j}O_{t}(x)-\Delta_{j}O_{s}(x)}^2
    \\&\lesssim \E\abs[\bigg]{\int_{s}^{t}\int_{\T}\Delta_j p_{t-r}(x-y)\xi(dr,dy)}^2\\&\quad+\E\abs[\bigg]{\int_{0}^{s}\int_{\T} \Delta_j (p_{t-r}-p_{s-r})(x-y)\xi(dr,dy)}^2
    \\
    &=\int_s^t\|\Delta_j p_{t-r}(x-\cdot)\|_{L^2(\T)}^2\,dr
    \\
    &\qquad+\int_0^s\|\Delta_j (p_{t-r}-p_{s-r})(x-\cdot)\|_{L^2(\T)}^2\,dr
    \\&=\int_{s}^{t}\sum_{k\in\Z}\rho_{j}(k)^2 e^{-8\pi^{2}k^{2}(t-r)}dr \\&\quad + \int_{0}^{s}\sum_{k\in\Z}\rho_{j}(k)^2 e^{-8\pi^{2}k^{2}(s-r)}(1-e^{-4\pi^{2}k^{2}(t-s)})^2dr.
\end{align*}
Integrating in time and using that $e^{-x}\leq\min(x^{-1},1)$ and $1-e^{-x}\leq \min(x, x^{1/2}, 1)$ for $x\geq 0$, we obtain
\begin{align*}
 \paren{\E\abs{\Delta_{j}O_{t}(x)-\Delta_{j}O_{s}(x)}^p}^{2/p}
    &\lesssim \sum_{k\in\Z} \rho_{j}(k)^2\min(\abs{t-s}, \abs{k}^{-2}) \\&\quad + \sum_{k\in\Z}\rho_{j}(k)^2\min(\abs{k}^{-2}, \abs{k}^{-2}\abs{k}^2\abs{t-s})
    \\&\lesssim 2^{j}\min (\abs{t-s},2^{-2j})\\&\lesssim 2^{j(2\lambda-1)}\abs{t-s}^{\lambda}
\end{align*}
for any $\lambda\in[0,1]$, yielding \eqref{eq:j-bound}.
\end{proof}

For the treatment of the temporal error, we use the stochastic sewing lemma, originating from \cite{Khoa}. We state here a weighted version (see \cite{ABLM, DGL}). The final conclusion of the lemma follows from \cite[Lemma~2.3]{BFG}. Let $$[S,T]_\leq:=\{(s,t)\mid S\leq s<t\leq T\}$$ and $$[S,T]^{\ast}_\leq:=\{(s,t)\mid S\leq s<t< T,\,t-s\leq T-t\}.$$ For a function $\mathcal{A}$ of one variable and $s\leq t$, we write $\mathcal{A}_{st}=\mathcal{A}_{t}-\mathcal{A}_{s}$ and for functions $A$ of two variables and $s\leq u\leq t$, we denote $\delta A_{sut}=A_{st}-A_{su}-A_{ut}$. 
Further, denote by $\E_{s}$ the conditional expectation with respect to $\F_s$.

\begin{lemma}\label{lem:ss}
  Fix $p\geq 2$ and $0\leq S<T\leq 1$. Let $A:[S,T]_{\leq}\to L^{p}(\Omega)$ be such that $A_{st}$ is $\F_{t}$-measurable for all $(s,t)\in[S,T]_{\leq}$. Suppose that there exist $\epsilon_1,\epsilon_2>0,\delta_1,\delta_2\geq 0$ and $C_{1},C_2<\infty$ satisfying $1/2+\epsilon_1-\delta_1>0,\, 1+\epsilon_2-\delta_2>0$ and such that 
  for all $(s,t)\in[S,T]^{\ast}_{\leq}$, $u\in[s,t]$ the bounds
  \begin{align}
      \norm{A_{st}}_{L^{p}(\Omega)}&\leq C_1 (T-t)^{-\delta_1}\abs{t-s}^{1/2+\epsilon_1},\label{eq:SSL-cond1}
      \\
      \norm{\E_{s}[\delta A_{sut}]}_{L^{p}(\Omega)}&\leq C_2 (T-t)^{-\delta_2}\abs{t-s}^{1+\epsilon_2}\label{eq:SSL-cond2}
  \end{align}
  hold. Then there exists a unique $(\F_t)_{t\in[S,T]}$-adapted process $\mathcal{A}:[S,T]\to L^{p}(\Omega)$ such that $\mathcal{A}_{S}=0$ and 
  that there exist $K_1,K_2$ such that for all $(s,t)\in[S,T]^{\ast}_{\leq}$ one has 
  \begin{align}
      \norm{\mathcal{A}_{st}-A_{st}}_{L^{p}(\Omega)}&\leq K_1 (T-t)^{-\delta_1}\abs{t-s}^{1/2+\epsilon_1}+ K_2 (T-t)^{-\delta_2}\abs{t-s}^{1+\epsilon_2},\label{eq:ss1}\\
      \norm{\E_{s}[\mathcal{A}_{st}- A_{st}]}_{L^{p}(\Omega)}&\leq K_2 (T-t)^{-\delta_2}\abs{t-s}^{1+\epsilon_2}\label{eq:ss2}.
  \end{align}
  Furthermore, there exists a constant $C$ depending only on $p,\eps_1,\eps_2$, such that the above bounds hold with $K_{1}=CC_1, K_{2}=C C_2$.
  Finally, there exists a constant $C'$ depending only on $p,\eps_1,\eps_2,\delta_1,\delta_2$, such that for all $(s,t)\in[S,T]_\leq$ one has
  \begin{align}
      \norm{\mathcal{A}_{st}}_{L^{p}(\Omega)}&\leq C'(C_1 \abs{t-s}^{1/2+\epsilon_1-\delta_1}+ C_2 \abs{t-s}^{1+\epsilon_2-\delta_2}).
  \end{align}
\end{lemma}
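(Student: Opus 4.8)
The plan is to follow the (unweighted) stochastic sewing lemma of \cite{Khoa}, treating the weight $(T-t)^{-\delta}$ as harmless bookkeeping as long as one stays on the region $[S,T]^{\ast}_{\leq}$, and then to upgrade the resulting local estimates to the stated global $L^{p}$ bound by a short geometric telescoping, which is the substance of \cite[Lemma~2.3]{BFG}. Accordingly I would split the argument into (i) the construction of $\mathcal{A}$ on $[S,T]$ together with \eqref{eq:ss1}--\eqref{eq:ss2}, following \cite{Khoa} (see also \cite{ABLM, DGL}); and (ii) the deduction of the final bound for all $(s,t)\in[S,T]_{\leq}$.

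For (i): fix $(s,t)\in[S,T]^{\ast}_{\leq}$, let $\pi_{m}=\{s=t_{0}<\dots<t_{2^{m}}=t\}$ be the $m$-th dyadic partition of $[s,t]$, and obtain $\pi_{m+1}$ by inserting the midpoints $w_{i}$ of the intervals $[t_{i},t_{i+1}]$. The first point to record is that all subintervals that occur stay in $[S,T]^{\ast}_{\leq}$: for $[u,v]\subseteq[s,t]$ one has $v-u\leq t-s\leq T-t\leq T-v$, so the hypotheses \eqref{eq:SSL-cond1}--\eqref{eq:SSL-cond2} may be applied to all the increments below. With $\mathcal{A}^{\pi}_{st}:=\sum_{[u,v]\in\pi}A_{uv}$, telescoping gives $\mathcal{A}^{\pi_{m+1}}_{st}-\mathcal{A}^{\pi_{m}}_{st}=-\sum_{i}\delta A_{t_{i}w_{i}t_{i+1}}$. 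Decomposing $\delta A_{t_{i}w_{i}t_{i+1}}=(\operatorname{Id}-\E_{t_{i}})\delta A_{t_{i}w_{i}t_{i+1}}+\E_{t_{i}}\delta A_{t_{i}w_{i}t_{i+1}}$: the terms $(\operatorname{Id}-\E_{t_{i}})\delta A_{t_{i}w_{i}t_{i+1}}$ form a sequence of martingale differences (each is $\F_{t_{i+1}}$-measurable with vanishing $\F_{t_{i}}$-conditional expectation), so by the Burkholder--Davis--Gundy inequality (valid since $p\geq2$) the $L^{p}(\Omega)$-norm of their sum, squared, is $\lesssim\sum_{i}\|\delta A_{t_{i}w_{i}t_{i+1}}\|_{L^{p}}^{2}$, which by \eqref{eq:SSL-cond1} and $T-t_{i+1}\geq T-t$ is $\lesssim C_{1}^{2}(T-t)^{-2\delta_{1}}\sum_{i}|t_{i+1}-t_{i}|^{1+2\epsilon_{1}}=C_{1}^{2}(T-t)^{-2\delta_{1}}|t-s|^{1+2\epsilon_{1}}2^{-2m\epsilon_{1}}$; the remaining terms are bounded, by the triangle inequality and \eqref{eq:SSL-cond2}, by $C_{2}\sum_{i}(T-t_{i+1})^{-\delta_{2}}|t_{i+1}-t_{i}|^{1+\epsilon_{2}}\leq C_{2}(T-t)^{-\delta_{2}}|t-s|^{1+\epsilon_{2}}2^{-m\epsilon_{2}}$ in $L^{p}(\Omega)$. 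Since $\epsilon_{1},\epsilon_{2}>0$, summing over $m$ shows $(\mathcal{A}^{\pi_{m}}_{st})_{m}$ is Cauchy in $L^{p}(\Omega)$; its limit $\mathcal{A}_{st}$ satisfies \eqref{eq:ss1} (starting the telescoping from $\mathcal{A}^{\pi_{0}}_{st}=A_{st}$, whence $K_{i}=CC_{i}$), and applying $\E_{s}$, which kills each martingale difference since $s\leq t_{i}$, gives \eqref{eq:ss2}. Independence of $\mathcal{A}_{st}$ of the partition sequence, additivity $\mathcal{A}_{st}=\mathcal{A}_{su}+\mathcal{A}_{ut}$ on $[S,T]^{\ast}_{\leq}$, the existence of an adapted $\mathcal{A}\colon[S,T]\to L^{p}(\Omega)$ with $\mathcal{A}_{S}=0$ and increments $\mathcal{A}_{st}$ (with $\mathcal{A}_{T}:=\lim_{t\uparrow T}\mathcal{A}_{t}$, which exists because on $[S,T]^{\ast}_{\leq}$ one has $\|\mathcal{A}_{st}\|_{L^{p}}\leq\|\mathcal{A}_{st}-A_{st}\|_{L^{p}}+\|A_{st}\|_{L^{p}}\lesssim|t-s|^{1/2+\epsilon_{1}-\delta_{1}}+|t-s|^{1+\epsilon_{2}-\delta_{2}}\to0$, using $1/2+\epsilon_{1}-\delta_{1}>0$ and $1+\epsilon_{2}-\delta_{2}>0$), and uniqueness all go through exactly as in the classical proof; the only change is carrying the weight, which is harmless on $[S,T]^{\ast}_{\leq}$ because $T-v\geq T-t$ whenever $v\leq t$.

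For (ii): let $(s,t)\in[S,T]_{\leq}$ be arbitrary and set $r_{k}:=t-(t-s)2^{-k}$ for $k\geq0$, so $r_{0}=s$, $r_{k}\uparrow t$, $r_{k+1}-r_{k}=(t-s)2^{-k-1}$, and $T-r_{k+1}=(T-t)+(t-s)2^{-k-1}\geq r_{k+1}-r_{k}$; in particular every $[r_{k},r_{k+1}]$ lies in $[S,T]^{\ast}_{\leq}$. By countable additivity $\mathcal{A}_{st}=\sum_{k\geq0}\mathcal{A}_{r_{k}r_{k+1}}$, and estimating $\|\mathcal{A}_{r_{k}r_{k+1}}\|_{L^{p}}\leq\|\mathcal{A}_{r_{k}r_{k+1}}-A_{r_{k}r_{k+1}}\|_{L^{p}}+\|A_{r_{k}r_{k+1}}\|_{L^{p}}$ via \eqref{eq:ss1} and \eqref{eq:SSL-cond1}, then using $T-r_{k+1}\geq r_{k+1}-r_{k}$ to absorb the weights into powers of the interval length, one gets $\|\mathcal{A}_{r_{k}r_{k+1}}\|_{L^{p}}\lesssim C_{1}\big((t-s)2^{-k-1}\big)^{1/2+\epsilon_{1}-\delta_{1}}+C_{2}\big((t-s)2^{-k-1}\big)^{1+\epsilon_{2}-\delta_{2}}$. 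Since $1/2+\epsilon_{1}-\delta_{1}>0$ and $1+\epsilon_{2}-\delta_{2}>0$, the geometric series in $k$ converges and summing yields $\|\mathcal{A}_{st}\|_{L^{p}}\lesssim C_{1}|t-s|^{1/2+\epsilon_{1}-\delta_{1}}+C_{2}|t-s|^{1+\epsilon_{2}-\delta_{2}}$, which is the final claim.

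As for the main obstacle, there is honestly none of real substance here: part (i) is the classical stochastic sewing lemma and part (ii) an elementary telescoping. The two points that genuinely require care are (a) checking that the dyadic subintervals in (i) and the geometric subintervals $[r_{k},r_{k+1}]$ in (ii) stay in $[S,T]^{\ast}_{\leq}$, so that the hypotheses legitimately apply, and (b) the bookkeeping of the weight, which throughout relies only on the one-sided comparisons $T-v\geq T-t$ and $T-r_{k+1}\geq r_{k+1}-r_{k}$. The smallness conditions $1/2+\epsilon_{1}-\delta_{1}>0$ and $1+\epsilon_{2}-\delta_{2}>0$ are used only in part (ii) and in extending $\mathcal{A}$ continuously up to time $T$, whereas part (i) needs only $\epsilon_{1},\epsilon_{2}>0$.
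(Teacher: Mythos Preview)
Your proposal is correct and follows exactly the approach the paper indicates: the paper does not give its own proof but cites \cite{Khoa} (and the weighted variants in \cite{ABLM, DGL}) for the construction and the bounds \eqref{eq:ss1}--\eqref{eq:ss2}, and \cite[Lemma~2.3]{BFG} for the final global estimate, which is precisely your parts (i) and (ii). The two technical checks you flag---that all subintervals arising in both steps remain in $[S,T]^{\ast}_{\leq}$, and that the weight can be bounded by $(T-t)^{-\delta}$ throughout---are exactly the points that distinguish the weighted version from the unweighted one, and your treatment of them is correct.
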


Next, we give the result on the well-posedness of the reaction-diffusion equation \eqref{eq:SPDE}.

\begin{proposition}\label{prop:wp}
Let \cref{asn:easier} or \cref{ass:f} hold.
Then there exists a unique mild solution $u$ 
to \eqref{eq:SPDE}. Moreover, for any $\lambda\in (0,1)$, $\epsilon\in (0,1/2)$, $p\geq 1$ there exists a constant $C=C(T,p,\lambda,\epsilon,m,K)$ such that the solution satisfies the bound
\begin{align}\label{eq:reg-b}
    \E\norm{u}_{C_{T}^{\lambda/2}\calC^{1/2-\lambda-\epsilon}(\T)}^{p}\leq C(1+\E\norm{u_{0}}_{\calC^{1/2}}^{(2m+1)p}).
\end{align}
\end{proposition}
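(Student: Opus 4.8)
The plan is to peel off the stochastic convolution and solve the resulting random PDE pathwise. Writing $v=u-O$, a mild solution $u$ of \eqref{eq:SPDE} is precisely a solution of
\[
v_t=P_tu_0+\int_0^tP_{t-s}f(v_s+O_s)\,ds ,
\]
and by \cref{prop:ou} we may fix, off a $\p$-null set, a realisation of $O$ in $C_T\calC^{\beta}$ for some $\beta\in(0,1/2)$, so that $O$ is in particular a bounded continuous function on $[0,T]\times\T$ admitting all moments of $\|O\|_{C_TL^\infty}$. For such a fixed forcing, local existence and uniqueness of $v$ in $C([0,\tau];C(\T))$ for some $\tau=\tau(\|u_0\|_{L^\infty},\|O\|_{C_TL^\infty})>0$ follows from a Banach fixed point argument: $P_t$ has operator norm $\le1$ on $C(\T)$ and $f$ is locally Lipschitz by \eqref{eq:f-ass1}, so $v\mapsto P_\cdot u_0+\int_0^\cdot P_{\cdot-s}f(v_s+O_s)\,ds$ is a contraction on a small ball of $C([0,\tau];C(\T))$. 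Uniqueness on all of $[0,T]$, global existence and the non-explosion of the solution will all follow from the usual continuation/Grönwall argument once an a priori $L^\infty$-bound is in hand.

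The core of the proof is this a priori bound, which under \cref{ass:f} rests on the one-sided Lipschitz condition \eqref{eq:f-ass2}. Under \cref{asn:easier} there is nothing to do, since $\|f\|_{L^\infty}\le K$ gives immediately $\sup_{t\le T}\|v_t\|_{L^\infty}\le\|u_0\|_{L^\infty}+KT$. Under \cref{ass:f} I would run a spectral Galerkin truncation $v^n=\Pi_n v^n$ of the above equation (a finite-dimensional ODE, hence genuinely differentiable in time) and perform an $L^{2n}$ energy estimate. Testing against $|v^n|^{2n-2}v^n$, the Laplacian term is $\le0$, while writing $f(v^n+O)=\big(f(v^n+O)-f(O)\big)+f(O)$ and using \eqref{eq:f-ass2} in the form $(f(x)-f(y))(x-y)\le K(x-y)^2$ gives $\langle f(v^n+O)-f(O),|v^n|^{2n-2}v^n\rangle\le K\|v^n\|_{L^{2n}}^{2n}$, whereas \eqref{eq:f-ass1} and Young's inequality give $\langle f(O),|v^n|^{2n-2}v^n\rangle\lesssim \|v^n\|_{L^{2n}}^{2n}+\tfrac1{2n}\|f(O)\|_{L^{2n}}^{2n}$ with $\|f(O)\|_{L^{2n}}^{2n}\lesssim 1+\|O\|_{C_TL^\infty}^{(2m+1)2n}$. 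Grönwall's inequality, taking $2n$-th roots and letting $n\to\infty$, yields
\[
\sup_{t\in[0,T]}\|v_t\|_{L^\infty}\le C(T,K)\big(\|u_0\|_{L^\infty}+1+\|O\|_{C_TL^\infty}^{2m+1}\big),
\]
and hence, by \eqref{eq:f-ass1} once more, $\Lambda:=\sup_{t\in[0,T]}\|f(u_t)\|_{L^\infty}\le C\big(1+\|u_0\|_{L^\infty}^{2m+1}+\|O\|_{C_TL^\infty}^{(2m+1)^2}\big)$. An alternative to the Galerkin step is to bootstrap $v$ into $C((0,\tau];\calC^{1+\gamma})$ via the smoothing bound \eqref{eq:HK-1}, so that $v$ is a classical solution for positive times, and then apply the maximum principle to $t\mapsto\max_x v(t,x)$ and $t\mapsto\min_x v(t,x)$ together with the affine estimate $f(z+o)\le f(0)+K(z+\|O\|_{C_TL^\infty})$, valid whenever $z\ge\|O\|_{C_TL^\infty}$.

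It remains to upgrade this to \eqref{eq:reg-b}. Decompose $u=P_\cdot u_0+D+O$ with $D_t=\int_0^tP_{t-s}f(u_s)\,ds$. The term $O$ is controlled by \cref{prop:ou}. For $P_\cdot u_0$, the bounds \eqref{eq:HK-1} give $\sup_{t}\|P_tu_0\|_{\calC^{1/2}}\lesssim\|u_0\|_{\calC^{1/2}}$ and, writing $(P_t-P_s)u_0=(\Id-P_{t-s})P_su_0$, $\|(P_t-P_s)u_0\|_{\calC^{1/2-\lambda}}\lesssim(t-s)^{\lambda/2}\|u_0\|_{\calC^{1/2}}$, hence $\|P_\cdot u_0\|_{C_T^{\lambda/2}\calC^{1/2-\lambda-\epsilon}}\lesssim\|u_0\|_{\calC^{1/2}}$ after Besov embedding. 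For $D$, \eqref{eq:HK-1} gives $\|D_t\|_{\calC^{\theta}}\lesssim\Lambda\int_0^t(t-r)^{-\theta/2}\,dr\lesssim_T\Lambda$ for every $\theta<2$; splitting $D_t-D_s=(\Id-P_{t-s})D_s+\int_s^tP_{t-r}f(u_r)\,dr$, applying the second bound of \eqref{eq:HK-1} with $D_s\in\calC^{1/2-\epsilon}$ (norm $\lesssim\Lambda$) to the first summand and the smoothing bound to the second, yields $\|D\|_{C_T^{\lambda/2}\calC^{1/2-\lambda-\epsilon}}\lesssim\Lambda$. Collecting the three terms, raising to the $p$-th power and taking expectations — using the moment bounds for $\|O\|$ from \cref{prop:ou} and $\|u_0\|_{L^\infty}\le C\|u_0\|_{\calC^{1/2}}$ — gives \eqref{eq:reg-b}; in particular the solution is global.

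The step I expect to be most delicate is the a priori bound under \cref{ass:f}: making the formal energy estimate rigorous (via the Galerkin truncation together with a stability argument for passing to the limit, or via parabolic bootstrapping and the comparison/maximum principle), and tracking the polynomial dependence on $\|u_0\|_{\calC^{1/2}}$ and $\|O\|_{C_TL^\infty}$ carefully enough that the exponent $2m+1$ in \eqref{eq:reg-b} is respected.
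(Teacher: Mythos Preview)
Your overall strategy matches the paper's exactly: first obtain a uniform bound $\E\|f(u)\|_{C_TL^\infty}^p\lesssim 1+\E\|u_0\|_{\calC^{1/2}}^{(2m+1)p}$, then decompose $u=P_\cdot u_0+D+O$ and use the heat-kernel estimates \eqref{eq:HK-1} on each piece to get \eqref{eq:reg-b}. The second half of your write-up (the bounds on $P_\cdot u_0$, $D$, and $O$) is essentially identical to what the paper does.

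The difference is in the first half. The paper does \emph{not} derive the a priori $L^\infty$ bound by hand: it simply observes that Assumptions~\ref{asn:easier}/\ref{ass:f} imply Cerrai's Hypotheses~6.1--6.2 and cites \cite[Proposition~6.2.2]{Cerrai} for existence, uniqueness, and the bound $\E\|u\|_{C_TC(\T)}^q\lesssim 1+\E\|u_0\|_{C(\T)}^q$. Your route is more self-contained, but one of your two proposed arguments has a gap. In the Galerkin $L^{2n}$ energy estimate you test the equation $\partial_t v^n=\Delta v^n+\Pi_n f(v^n+O)$ against $|v^n|^{2n-2}v^n$; the Laplacian term is fine, but for the nonlinearity you get $\langle \Pi_n f(v^n+O),|v^n|^{2n-2}v^n\rangle=\langle f(v^n+O),\Pi_n(|v^n|^{2n-2}v^n)\rangle$, and since $|v^n|^{2n-2}v^n$ is \emph{not} in the truncated space the projection cannot be dropped, so the one-sided Lipschitz condition does not apply directly. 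Your alternative — bootstrap $v$ to a classical solution for $t>0$ and run the comparison/maximum-principle argument with $f(z+o)\le f(0)+K(z+\|O\|_{C_TL^\infty})$ (and the symmetric lower bound) — is sound and yields the correct linear dependence on $\|u_0\|_{L^\infty}$, hence the exponent $(2m+1)p$ in \eqref{eq:reg-b}. So either cite Cerrai as the paper does, or commit to the maximum-principle route.
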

\begin{proof}
We give the proof under the assumption that $f\in C^{2}$ satisfies \eqref{eq:f-ass1} and \eqref{eq:f-ass2} for $i=0,1,2$, which is in particular implied by \cref{asn:easier} and \cref{ass:f}. These conditions on $f$ imply \cite[Hypothesis 6.2]{Cerrai}. Furthermore, \cite[Hypothesis 6.1]{Cerrai} is satisfied by the operators $A=\Delta$ and $Q=\Id$ with the spaces $H=L^{2}(\mathbb{T})$, $E=C(\mathbb{T})$.
Thus, the existence and uniqueness of a mild solution $u$ with paths in $C_{T}C(\mathbb{T})$ follows from \cite[Proposition 6.2.2]{Cerrai}, as well as the bound, for any $q\geq 1$, 
\begin{align}\label{eq:cerrai-bound}
  \E\norm{u}_{C_{T}C(\mathbb{T})}^q \lesssim (1+\E\norm{u_{0}}_{C(\mathbb{T})}^q).
\end{align}
As an immediate consequence,
\begin{align}\label{eq:f(u)-bound}
      \E\norm{f(u)}_{C_{T}C(\mathbb{T})}^q \lesssim (1+\E\norm{u_{0}}_{C(\mathbb{T})}^{(2m+1)q}).
\end{align}
As for the bound \eqref{eq:reg-b}, note that this bound is shown for $O$ in \eqref{eq:ou-reg}, and one also has immediately from \eqref{eq:HK-1}
\begin{align*}
    \|P_tu_0-P_su_0\|_{\calC^{1/2-\lambda-\eps}}\lesssim |t-s|^{(\lambda+\eps)/2}\|u_0\|_{\calC^{1/2}},
\end{align*}
so it suffices to prove \eqref{eq:reg-b} for $v=u-O-P_\cdot u_0$. One can decompose the increments of $v$ as
 \begin{align*}
     v_t-v_s = \int_{s}^{t}P_{t-r}f(u_{r})dr -\int_{0}^{s} P_{s-r}(\Id-P_{t-s})f(u_{r})dr.
 \end{align*} 
Using again the semigroup estimates \eqref{eq:HK-1}, we obtain for $\lambda\in (0,1)$, $\epsilon\in (0,1/2)$ 
 \begin{align}\label{eq:time-reg}
     \norm{v_t -v_s}_{\calC^{1/2-\lambda-\epsilon}}&\lesssim \int_{s}^{t}\norm{P_{t-r}f(u_{r})}_{\calC^{1/2-\lambda-\epsilon}}dr + \int_{0}^{s}\norm{P_{s-r}(\operatorname{Id}-P_{t-s})f(u_r)}_{\calC^{1/2-\lambda-\epsilon}}dr\nonumber 
     \\&\lesssim
     \Big(\int_s^t(t-r)^{(\lambda/2+\epsilon/2-1/4)\wedge0}\,dr+\int_0^s(t-s)^{\lambda/2+\eps/2}(s-r)^{-1/4}\,dr\Big)\nonumber
     \\&\qquad\times\norm{f(u)}_{C_{T}C(\mathbb{T})}\nonumber
     \\
     &\lesssim (t-s)^{\lambda/2+\eps/2}\norm{f(u)}_{C_{T}C(\mathbb{T})}.\nonumber
 \end{align}
 Taking $p$-th moment, using \eqref{eq:f(u)-bound}, and applying Kolmogorov's continuity theorem, we get the claimed bound \eqref{eq:reg-b}.
\end{proof}

We give a version of Gr\"onwall's inequality that is repeatedly used in \cref{sec:bounded-f}. For a proof see in the Appendix.
\begin{proposition}\label{prop:Gronwall}
    Let $V$ be a Banach space, $p\geq 1$, and take three processes $X,Y,Z$ belonging to $L^p(\Omega;C([0,T];V)).$ Assume furthermore that there exists a Lipschitz continuous function $F$ on $V$ with Lipschitz constant $L_1$, a family $(S(s,t))_{0\leq s\leq t\leq T}$ of uniformly bounded linear operators on $V$ with uniform bound $L_2$ and such that $(s,t)\mapsto S(s,t)v$ is measurable for any $v\in V$, and a measurable mapping $\tau:[0,T]\to [0,T]$ such that $\tau(s)\leq s$ and that the following equality holds for all $0\leq t\leq T$:
    \begin{align}
        X_t-Y_t=Z_t+\int_0^tS(s,t)\big(F(X_{\tau(s)})-F(Y_{\tau(s)})\big)\,ds.
    \end{align}
    Then there exists a constant $C=C(p,L_1,L_2,T)$ such that
    \begin{align}
        \E\sup_{t\in[0,T]}\|X_t-Y_t\|^p\leq C\E\sup_{t\in[0,T]}\|Z_t\|^p.
    \end{align}
\end{proposition}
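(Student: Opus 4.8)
The plan is to run a completely classical pathwise Gr\"onwall argument, taking expectations only at the very last step.

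First I would set $D_t:=X_t-Y_t$, which lies in $C([0,T];V)$ almost surely, so that the non-decreasing function $g(t):=\sup_{r\in[0,t]}\|D_r\|$ is finite for every fixed $\omega$. Inserting the uniform bound $\|S(s,t)v\|\leq L_2\|v\|$ and the Lipschitz estimate $\|F(x)-F(y)\|\leq L_1\|x-y\|$ into the hypothesised identity yields, for every $t\in[0,T]$,
\[
\|D_t\|\leq \|Z_t\|+L_1L_2\int_0^t\|D_{\tau(s)}\|\,ds .
\]
Since $\tau(s)\leq s$ we have $\|D_{\tau(s)}\|\leq g(s)$; the resulting right-hand side is then non-decreasing in $t$, so taking the supremum over $[0,t]$ on the left-hand side gives
\[
g(t)\leq \sup_{r\in[0,T]}\|Z_r\|+L_1L_2\int_0^t g(s)\,ds .
\]

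Next I would apply the integral form of Gr\"onwall's lemma to the bounded, measurable function $g$ --- either by citing the standard statement, or by iterating the inequality $n$ times and letting $n\to\infty$, the remainder $\tfrac{(L_1L_2T)^n}{n!}\sup_{[0,T]}g$ vanishing because $g$ is a.s.\ bounded. This produces
\[
\sup_{t\in[0,T]}\|D_t\|=g(T)\leq e^{L_1L_2T}\sup_{r\in[0,T]}\|Z_r\|\qquad\text{a.s.}
\]
Raising to the power $p$ and taking expectations then gives the assertion with $C=e^{pL_1L_2T}$.

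There is no genuine obstacle here; the only points needing a word of care are the measurability of $s\mapsto\|D_{\tau(s)}\|$ (which follows from continuity of $D$ and measurability of $\tau$, and is in any case implicit in the assumption that the displayed identity holds, so that the integral is well defined) and the almost-sure finiteness of $g$ (immediate from $X,Y\in C([0,T];V)$), both of which are needed for the manipulations above to be rigorous.
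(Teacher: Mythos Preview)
Your proof is correct. The paper actually states this proposition without proof, treating it as a standard auxiliary fact, so there is nothing to compare against; your pathwise Gr\"onwall argument with the explicit constant $C=e^{pL_1L_2T}$ is precisely the natural justification one would supply.
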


\section{Proof of Theorem \ref{thm:main}}\label{sec:bounded-f}
We start by giving a brief overview of the proof of Theorem \ref{thm:main}.
Introduce auxiliary processes $\tilde{U}^{N}$ and $\tilde{V}^{M,N}$. Let $\tilde{U}^{N}$ be defined as the solution of 
\begin{align}
    \tilde{U}^{N}_{t}&=P^{N}_{t}u_{0}^{N}+\int_{0}^{t}P_{t-s}^{N}f(\tilde{U}^{N}_{s})ds+O_{t},\quad t\in[0,T]\label{eq:tilde-UN}
\end{align} and let $\tilde{V}^{M,N}$ be defined on the time grid points via the inductive form: 
$\tilde{V}^{M,N}_{0}=\Pi_N u_0$ and
\begin{align*}
\tilde{V}^{M,N}_{t_{k+1}}=P^{N}_{h}\tilde{V}^{M,N}_{t_k}+\Delta_{N}^{-1}(P^{N}_{h}-\operatorname{Id})\Pi_{N}(f(\tilde{V}^{M,N}_{t_{k}}))+O_{t_{k+1}}-P^{N}_{h}O_{t_{k}}
\end{align*} for $k=0,\dots, M-1$, and for $t\in[0,T]$ via
\begin{align}
\tilde{V}^{M,N}_{t}&=P^{N}_{t}u_{0}^{N}+\int_{0}^{t}P_{t-s}^{N}f(\tilde{V}^{M,N}_{k_{M}(s)})ds+O_{t}.\label{eq:tilde-VMN}
\end{align}
First, we address the wellposedness for the equations for $U^{N}$ and $\tilde{U}^{N}$ in the following proposition.
\begin{proposition}
Let $f$ satisfy \cref{asn:easier}. Then there exist unique solutions $U^{N}$ and $\tilde{U}^{N}$ to the equations \eqref{eq:UN} and \eqref{eq:tilde-UN}. Moreover, for any $\lambda\in (0,1)$, $\epsilon\in (0,1/2)$, $p\geq 1$ there exists a constant $C=C(T,p,\lambda,\epsilon)$ such that the solutions satisfy 
\begin{align*}
    \sup_{N\in\N} \E\norm{U^{N}}_{C_{T}^{\lambda/2}\calC^{1/2-\lambda-\epsilon}(\T)}^{p}+\sup_{N\in\N} \E\norm{\tilde{U}^{N}}_{C_{T}^{\lambda/2}\calC^{1/2-\lambda-\epsilon}(\T)}^{p}\leq C(1+\E\norm{u_{0}}_{\calC^{1/2}}^{p}).
\end{align*}
\end{proposition}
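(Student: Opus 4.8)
The plan is to obtain existence and uniqueness by reducing each equation to a finite-dimensional random ODE, and to get the uniform-in-$N$ regularity bound by imitating the decomposition used in the proof of \cref{prop:wp}, the essential simplification being the global bound on $f$.

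For existence and uniqueness I would first set $Z^{N}:=U^{N}-O^{N}$ (respectively $\tilde{Z}^{N}:=\tilde{U}^{N}-O$). Since the drift in \eqref{eq:UN} and \eqref{eq:tilde-UN} takes values in the finite-dimensional subspace $\Pi_{N}L^{2}(\T)$, the subtracted process solves a random ODE of the form $Z_{t}=P^{N}_{t}u_{0}+\int_{0}^{t}P_{t-r}\Pi_{N}f(Z_{r}+O^{N}_{r})\,dr$ in $\Pi_{N}L^{2}(\T)$, equivalently $\dot Z_{t}=\Delta_{N}Z_{t}+\Pi_{N}f(Z_{t}+O^{N}_{t})$ with $Z_{0}=\Pi_{N}u_{0}$. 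By \cref{prop:ou} and the embedding $\calC^{1/4}\hookrightarrow C(\T)$, the driving path $r\mapsto O^{N}_{r}=\Pi_{N}O_{r}$ (resp. $r\mapsto O_{r}$) is a.s. continuous with values in $C(\T)$; since $\partial f$ is bounded by $K$, the right-hand side is Lipschitz in $Z$ and of linear (in fact bounded) growth, so Picard--Lindel\"of gives a unique, non-exploding solution (alternatively one reads \eqref{eq:UN} as a finite-dimensional It\^o SDE with globally Lipschitz, linearly growing coefficients). Undoing the substitution produces the unique $U^{N}$ and $\tilde{U}^{N}$.

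For the regularity estimate I would decompose $U^{N}_{t}=P^{N}_{t}u_{0}+v^{N}_{t}+O^{N}_{t}$ with $v^{N}_{t}:=\int_{0}^{t}P_{t-r}\Pi_{N}f(U^{N}_{r})\,dr$, and bound each piece in $C_{T}^{\lambda/2}\calC^{1/2-\lambda-\eps}$, the case of $\tilde{U}^{N}$ being identical with $O^{N}$ replaced by $O$. For the stochastic convolution term I would rerun the computation in the proof of \cref{prop:ou} with the harmless extra factor $\1_{\abs{k}\le N}\le 1$ inside each Fourier sum, giving \eqref{eq:ou-moment-bounds} and hence \eqref{eq:ou-reg} for $O^{N}$ with a constant independent of $N$ (for $\tilde{U}^{N}$ this is literally \cref{prop:ou}). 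For the initial-data term $P^{N}_{\cdot}u_{0}=P_{\cdot}\Pi_{N}u_{0}$ I would use the two heat-kernel bounds in \eqref{eq:HK-1} together with $\sup_{N}\norm{\Pi_{N}u_{0}}_{\calC^{1/2-\eps/4}}\lesssim\norm{u_{0}}_{\calC^{1/2}}$, where the mild loss of spatial regularity absorbs the (at most logarithmic in $N$) operator norm of the sharp Fourier projection on H\"older--Besov spaces.

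The crux is the drift term, and this is exactly where the global bound in \cref{asn:easier} makes the argument short: $\norm{f(U^{N}_{r})}_{L^{\infty}(\T)}\le K$ \emph{deterministically}, hence $\norm{\Pi_{N}f(U^{N}_{r})}_{\calC^{-\eps/4}}\lesssim K$ uniformly in $N$, and no a priori bound on $U^{N}$ itself (in particular no Gr\"onwall-type argument) is needed. Splitting the increment exactly as in the proof of \cref{prop:wp},
\begin{align*}
  v^{N}_{t}-v^{N}_{s}=\int_{s}^{t}P_{t-r}\Pi_{N}f(U^{N}_{r})\,dr-\int_{0}^{s}P_{s-r}(\Id-P_{t-s})\Pi_{N}f(U^{N}_{r})\,dr,
\end{align*}
and trading the singular factors $(t-r)^{-\theta/2}$, $(s-r)^{-\theta/2}$ and the smoothing gain $(t-s)^{\tilde\theta/2}$ from \eqref{eq:HK-1} against the available $\eps$- and $\lambda$-room, one obtains $\norm{v^{N}}_{C_{T}^{\lambda/2}\calC^{1/2-\lambda-\eps}}\lesssim K$ deterministically. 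Assembling the three bounds and taking $p$-th moments yields the claim. The only genuine obstacle is uniformity in $N$: $\Pi_{N}$ is not uniformly bounded on $\calC^{\theta}$, but since the target regularity $1/2-\lambda-\eps$ lies strictly below the source regularities wherever the projection is applied, an arbitrarily small regularity sacrifice restores uniformity, and everything else is routine heat-kernel bookkeeping.
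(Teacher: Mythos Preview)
Your proof is correct and follows essentially the same route as the paper: existence/uniqueness via a classical argument (you reduce to a finite-dimensional random ODE, whereas the paper applies the Banach fixed point theorem in $C_T L^\infty$), and the regularity bound by decomposing into initial-data, drift, and stochastic-convolution pieces and applying the heat-kernel estimates \eqref{eq:HK-1} exactly as in the proof of \cref{prop:wp}. You are more explicit than the paper about the uniformity-in-$N$ issue for $\Pi_N$ on H\"older--Besov spaces (handling it by a small regularity sacrifice), but otherwise the arguments coincide.
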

\begin{proof}
The argument is classical. Using the global Lipschitz bound on $f$ from \cref{asn:easier}, that $\sup_{N}\norm{P_{t}^{N} u}_{L^{\infty}}\leq \norm{u}_{L^{\infty}}$ for all $t\geq 0$ and applying the Banach fixed point theorem, one finds a unique fixed point of the mild formulation of the equations for $U^{N}$ and for $\tilde{U}^{N}$ in $C_{T}L^{\infty}$ if $T$ is chosen small enough. Patching the solutions on subintervals together yields a solution for any arbitrary time horizon $T>0$. Then plugging the solution back in the mild formulation and using the semigroup estimates for $P^{N}$ instead of $P$, one obtains the claimed regularity bounds (cf.~in the proof of \cref{prop:wp}).
\end{proof}
The error $u-V^{M,N}$ is decomposed into the spatial errors $u-U^N$, $U^N-\tilde U^N$, $V^{M,N}-\tilde{V}^{M,N}$ and the temporal error $\tilde U^N-\tilde{V}^{M,N}$. Bounding the Galerkin error $u-U^{N}$ by order $N^{-1/2+\eps}$ is fairly standard and is already done in e.g. \cite{Jentzen-Kloden}, although a noteworthy difference is that below we obtain estimates in $L^\infty(\T)$ instead of $L^2(\T)$ (Lemmas \ref{lem:aux} and \ref{lem:spatial}).
The biggest novelty of the section comes from the treatment of the temporal error in $\tilde U^N-\tilde{V}^{M,N}$, Lemma \ref{lem:main} below. These three lemmas together imply Theorem \ref{thm:main}.

\begin{lemma}\label{lem:aux}
Assume the setting of Theorem \ref{thm:main}.
Let $N\in\N$ and let $U^{N},\tilde{U}^{N}$ be as in \eqref{eq:UN} and \eqref{eq:tilde-UN} and $V^{M,N},\tilde{V}^{M,N}$ be as in \eqref{eq:JK-scheme} and \eqref{eq:tilde-VMN}. Let $p\in[1,\infty)$ and $\eps>0$. Then there exists a constant $C=C(T,p,\eps,K)$ such that the following bound holds 
\begin{align*}
\big(\E\sup_{t\in[0,T]}\norm{\tilde{U}^{N}_{t}-U^{N}_{t}}_{L^{\infty}}^p\big)^{1/p}+\big(\E\sup_{t\in[0,T]}\norm{\tilde{V}^{M,N}_{t}-V^{M,N}_{t}}_{L^{\infty}}^p\big)^{1/p}\leq C N^{-1/2+\epsilon}.
\end{align*}
\end{lemma}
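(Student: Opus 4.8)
The plan is to reduce the whole statement to a single estimate on the mismatch between the noises, $O-O^N=(\Id-\Pi_N)O$, combined with a Gr\"onwall argument. First I would note that $O^N_t=\Pi_N O_t$ and $u_0^N=\Pi_N u_0$, so (using $\Pi_N^2=\Pi_N$) the initial-data terms $P^N_t u_0$ cancel in the differences, and subtracting \eqref{eq:tilde-UN} from \eqref{eq:UN}, resp.\ \eqref{eq:tilde-VMN} from \eqref{eq:JK-scheme} (extended to all $t$), yields
\begin{equation*}
\tilde U^N_t-U^N_t=(\Id-\Pi_N)O_t+\int_0^t P^N_{t-s}\bigl(f(\tilde U^N_s)-f(U^N_s)\bigr)\,ds,
\end{equation*}
and the analogous identity for $\tilde V^{M,N}-V^{M,N}$ with the integrand evaluated at $k_M(s)$. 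These are exactly of the form handled by \cref{prop:Gronwall}, with forcing $(\Id-\Pi_N)O$, Lipschitz map $F=f$ (Lipschitz constant $K$ by \cref{asn:easier}(a)), linear operators $S(s,t)=P^N_{t-s}$, and $\tau(s)=s$ resp.\ $\tau(s)=k_M(s)$; the processes involved have the required integrability by the preceding proposition (and, for $V,\tilde V$, since $f$ is bounded). The crucial point is that I would apply \cref{prop:Gronwall} in $V=L^2(\T)$ rather than $L^\infty(\T)$: on $L^2$ the operator $P^N_{t-s}=\Pi_N P_{t-s}$ is a contraction (both $\Pi_N$ and the Fourier multiplier $e^{-4\pi^2k^2(t-s)}$ are), whereas its $L^\infty$-operator norm grows logarithmically in $N$, which would destroy the Gr\"onwall constant. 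This gives
\begin{equation*}
\E\sup_{t\in[0,T]}\|\tilde U^N_t-U^N_t\|_{L^2}^p+\E\sup_{t\in[0,T]}\|\tilde V^{M,N}_t-V^{M,N}_t\|_{L^2}^p\lesssim\E\sup_{t\in[0,T]}\|(\Id-\Pi_N)O_t\|_{L^2}^p.
\end{equation*}

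Next I would bound the right-hand side, in fact in the stronger norm $C_T\calC^{\eps_0}$ for a small $\eps_0>0$ (which controls $L^\infty$ and $L^2$ and is reused below). For $g\in\calC^\alpha$ with $\alpha:=1/2-\lambda-\eps_1>\eps_0$, the block $\Delta_j(\Id-\Pi_N)g$ vanishes once the frequency support of $\Delta_j$ lies in $\{|k|\le N\}$, i.e.\ for $2^j\lesssim N$, while for larger $j$, crudely estimating $\Pi_N$ on a single block by the $L^1$-norm of the Dirichlet kernel, $\|\Delta_j(\Id-\Pi_N)g\|_{L^\infty}\lesssim\log N\,\|\Delta_jg\|_{L^\infty}\lesssim\log N\,2^{-j\alpha}\|g\|_{\calC^\alpha}$; taking the $\calC^{\eps_0}$-norm and using $\alpha-\eps_0>0$ gives $\|(\Id-\Pi_N)g\|_{\calC^{\eps_0}}\lesssim\log N\,N^{-(\alpha-\eps_0)}\|g\|_{\calC^\alpha}$. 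Applying this with $g=O_t$, taking $\sup_t$, the $p$-th moment and invoking \eqref{eq:ou-reg}, the right-hand side of the previous display is $\lesssim(\log N)^pN^{-(1/2-\lambda-\eps_1-\eps_0)p}$, which is $\lesssim N^{-(1/2-\eps)p}$ once $\lambda,\eps_1,\eps_0$ are chosen small in terms of $\eps$.

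It remains to upgrade the $L^2$-estimate on $\tilde U^N-U^N$ and $\tilde V^{M,N}-V^{M,N}$ to an $L^\infty$-estimate, which I would do by a non-recursive bootstrap. Using the first smoothing bound in \eqref{eq:HK-1}, the embedding $L^2=B^0_{2,2}\hookrightarrow\calC^{-1/2}$, the logarithmic bound for $\Pi_N$ on $\calC^{\eps_0}$ just used, and the Lipschitz bound on $f$, the difference identity gives for $D_t:=\tilde U^N_t-U^N_t$
\begin{align*}
\|D_t\|_{L^\infty}&\lesssim\|(\Id-\Pi_N)O_t\|_{\calC^{\eps_0}}+\int_0^t\bigl\|\Pi_N P_{t-s}\bigl(f(\tilde U^N_s)-f(U^N_s)\bigr)\bigr\|_{\calC^{\eps_0}}\,ds\\
&\lesssim\|(\Id-\Pi_N)O_t\|_{\calC^{\eps_0}}+\log N\int_0^t(t-s)^{-1/4-\eps_0/2}\,\|D_s\|_{L^2}\,ds\\
&\lesssim\|(\Id-\Pi_N)O_t\|_{\calC^{\eps_0}}+\log N\,\sup_{s\in[0,T]}\|D_s\|_{L^2},
\end{align*}
since $\int_0^T r^{-1/4-\eps_0/2}\,dr<\infty$. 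Taking $\sup_t$ and the $p$-th moment and combining with the two bounds already obtained yields $\E\sup_t\|D_t\|_{L^\infty}^p\lesssim N^{-(1/2-\eps)p}$ (slightly enlarging $\eps$ to absorb $\log N$); the term $\tilde V^{M,N}-V^{M,N}$ is treated identically, the only change being the harmless replacement $\|f(\tilde V^{M,N}_{k_M(s)})-f(V^{M,N}_{k_M(s)})\|_{L^2}\le K\sup_{r\le s}\|\tilde V^{M,N}_r-V^{M,N}_r\|_{L^2}$. The main obstacle is precisely the one flagged above: since $\Pi_N$ (hence $P^N$) is not uniformly bounded on $L^\infty$, the Gr\"onwall step must be performed in $L^2$, where $P^N$ is a contraction, and only afterwards transferred to $L^\infty$ via parabolic regularisation; thus it is the interplay of the $L^2$-contractivity of $P^N$, the smoothing \eqref{eq:HK-1}, and the (almost) $1/2$ spatial regularity of $O$ from \cref{prop:ou} that produces the rate $N^{-1/2+\eps}$.
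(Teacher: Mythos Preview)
Your proof is correct and takes a genuinely different route from the paper's. The paper applies \cref{prop:Gronwall} directly with $V=L^\infty$ and bounds $O-O^N$ by a direct Littlewood--Paley/It\^o-isometry computation (essentially redoing the proof of \cref{prop:ou} for the truncated tail), obtaining $\E\|O-O^N\|_{C_T^{\eps'''}\calC^{\eps'''}}^p\lesssim N^{p(-1/2+\eps)}$ and plugging this straight into Gr\"onwall. You instead run Gr\"onwall in $L^2$, bound $(\Id-\Pi_N)O$ by applying the high-frequency projection to the already-known regularity $O\in C_T\calC^{1/2-}$ from \cref{prop:ou}, and then lift the $L^2$ estimate to $L^\infty$ by a one-step parabolic bootstrap. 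Your route is slightly longer but buys robustness on exactly the point you flag: the Dirichlet projection $\Pi_N$ is \emph{not} uniformly bounded on $L^\infty$ (indeed $P^N_0=\Pi_N$ has $L^\infty$-operator norm $\sim\log N$), so the uniform bound on $S(s,t)=P^N_{t-s}$ required by \cref{prop:Gronwall} in $V=L^\infty$ fails near $s=t$, and a naive $\log N$ in the Gr\"onwall constant would exponentiate to a polynomial loss in $N$. The paper's argument passes over this; your $L^2$-Gr\"onwall plus smoothing bootstrap is a clean way to close the gap while still landing in $L^\infty$.
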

\begin{proof}
    We first prove the bound for $\tilde{U}^{N}-U^{N}$.
    We have that
    \begin{align*}
        \tilde{U}^{N}_{t}-U^{N}_{t}=\int_{0}^{t}P^{N}_{t-s}(f(\tilde{U}^{N}_s)-f(U^{N}_{s}))ds + O_{t}-O_{t}^{N}.
    \end{align*}
We will apply \cref{prop:Gronwall} with $X=\tilde U^N$, $Y=U^N$, $Z=O-O^N$, $S(s,t)=P^N_{t-s}$, $\tau(s)=s$. Therefore, it suffices to bound $\E\sup_{t\in[0,T]}\|O_t-O^N_t\|_{L^\infty}^p$.
    The difference of the truncated and full expansion of the Ornstein-Uhlenbeck process we write as follows
    \begin{align*}
        (O_{t}-O_{t}^{N})(x)=\int_{0}^{t}\int_{\mathbb{T}}(p_{t-s}-p^{N}_{t-s})(x-y)\xi(ds,dy),
    \end{align*}
    where 
    \begin{align*}
      (p_{t-s}-p^{N}_{t-s})(x)=  \sum_{\abs{k}> N}e^{-4\pi^2 k^2 (t-s)}e^{2\pi i k x}.
    \end{align*}
    Then we obtain for $O_{r,t}=O_{t}-O_{r}$, $r\leq t$, and $O^{N}_{r,t}$ analoguously defined, that
    \begin{align*}
        (O_{r,t}-O_{r,t}^{N})(x)&=\int_{r}^{t}\int_{\mathbb{T}}(p_{t-s}-p^{N}_{t-s})(x-y)\xi(ds,dy) \\&\qquad - \int_{0}^{r}\sum_{\abs{k}>N}e^{-4\pi^2 k^2 (r-s)}(1-e^{-4\pi^2 k^2 (t-r)})e^{2\pi i k (x-y)} \xi(ds,dy).
    \end{align*}
    With the same steps as in the proof of Proposition \ref{prop:ou}, that is Gaussian hypercontractivity (i.e. for a Gaussian random variable $Y$ we have that $(\E\abs{Y}^p)^{1/p}\leq C_{p}(\E\abs{Y}^2)^{1/2}$ for any $p\geq 1$), It\^o's isometry and Parseval identity, we get 
    \begin{align*}
        \paren{\E\abs{\Delta_{j}(O_{r,t}-O_{r,t}^{N})(x)}^p}^{1/p}
        &\lesssim \paren[\bigg]{\int_{r}^{t}\sum_{\abs{k}>N}\rho_{j}(k)^{2}e^{-8\pi^2 k^2 (t-s)}ds\\&\qquad +\int_{0}^{r}\sum_{\abs{k}>N}\rho_j(k)^2 e^{-8\pi^2k^2 (r-s)}(1-e^{-4\pi^2k^2 (t-r)})^2ds}^{1/2}
        \\&\leq \paren[\bigg]{   \sum_{\abs{k}>N}\frac{1-e^{-8\pi^2 k^2 (t-r)}}{8\pi^2 k^{2}} 
        \\&\qquad +  \sum_{\abs{k}>N}\frac{(1-e^{-8\pi^2 k^2 r})(1-e^{-4\pi^2k^2 (t-r)})^2}{8\pi^2 k^{2}}  }^{1/2}
        \\&\lesssim  \paren[\bigg]{(t-r)^{\epsilon}\sum_{\abs{k}>N}\frac{1}{k^{2-2\epsilon}}}^{1/2}
        \\&\lesssim N^{-1/2+\epsilon'}(t-r)^{\epsilon'/2},
    \end{align*} 
    using that for any $\epsilon'\in [0,1]$,  $1-e^{-x}\leq x^{\epsilon'}$, $x\geq 0$.
On the other hand, one has the trivial uniform in $N$ bound
    \begin{align*}
        \paren{\E\abs{\Delta_{j}(O_{r,t}-O_{r,t}^{N})(x)}^p}^{1/p}
        &\lesssim \paren[\bigg]{\int_{r}^{t}\sum_{\abs{k}>N}\rho_{j}(k)^{2}e^{-8\pi^2 k^2 (t-s)}ds
        \\&\qquad +\int_{0}^{r}\sum_{\abs{k}>N}\rho_j(k)^2 e^{-8\pi^2k^2 (r-s)}(1-e^{-4\pi^2k^2 (t-r)})^2ds}^{1/2}\\&\lesssim \paren[\bigg]{\int_{r}^{t} 2^{j} e^{-8\pi^2 2^{2j} (t-s)} ds 
        \\&\qquad +\int_{0}^{r}2^{j} e^{-8\pi^2 2^{2j} (r-s)}(1-e^{-4\pi^2 2^{2j} (t-r)})^2ds}^{1/2}
        \\&\lesssim 2^{-j/2} .
    \end{align*} 
    Hence, interpolation between the two bounds yields that for any $\epsilon''\in(0,1]$,
    \begin{align}\label{eq:incrementOerror}
    \big(\E\abs{\Delta_{j}(O_{r,t}-O_{r,t}^{N})(x)}^p\big)^{1/p}\lesssim 2^{-j\epsilon''/2}N^{(-1/2+\epsilon')(1-\epsilon'')}(t-r)^{(1-\epsilon'')\epsilon'/2}.        
    \end{align}
    Choosing $\eps',\eps''>0$ sufficiently small and $p$ sufficiently large, we repeat the argument from \eqref{eq:j-bound} to \eqref{eq:ou-reg}, to deduce from \eqref{eq:incrementOerror} the bound
    \begin{align}\label{eq:ou-bound}
        \E[\norm{O-O^{N}}_{C_{T}^{\eps'''}\mathcal{C}^{\eps'''}}^p]^{1/p}\lesssim N^{-1/2+\epsilon}
    \end{align}
    with some small $\eps'''>0$.
   Thus the claim follows from \eqref{eq:ou-bound} and \cref{prop:Gronwall}.

    The bound for $\tilde{V}^{M,N}-V^{M,N}$ is done analogously: since
    \begin{align}
        \tilde V^{M,N}_t-V^{M,N}_t=
        \int_{0}^{t}P_{t-s}^{N}(f(\tilde{V}^{M,N}_{k_{M}(s)}-f(V^{M,N}_{k_{M}(s)}))ds+O_{t}-O^{N}_t,
    \end{align}
    we can use \cref{prop:Gronwall} almost exactly as before, with the only difference being that $\tau(s)=k_{M}(s)$.
\end{proof}
In the $L^{2}$-norm the following spatial error bound is known, see e.g. \cite[equation (5.2) for $\gamma=1/4-\epsilon$, $\lambda_{N}=\pi^2 N^2$]{Jentzen-Kloden}. We prove it for the $L^{\infty}$-norm in the following lemma.
\begin{lemma}\label{lem:spatial}
Assume the setting of \cref{thm:main}.
Let $N\in\N$ and let $u,U^{N}$ be as in \eqref{eq:mild} and \eqref{eq:UN}. Let $p\geq 1$ and $\eps\in(0,1/2)$. Then there exists a constant $C=C(p,\eps,K,\cM)$ such that the following bound holds  

\begin{align*}
\big(\E\sup_{t\in[0,T]}\norm{u_{t}-U^{N}_{t}}_{L^{\infty}}^p\big)^{1/p}\leq C N^{-1/2+\epsilon}.
\end{align*}
\end{lemma}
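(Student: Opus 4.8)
The plan is to compare $u$ and $U^N$ through the mild formulations \eqref{eq:mild} and \eqref{eq:UN}. Writing out the difference gives
\begin{align*}
u_t-U^N_t=(P_t-P_t^N)u_0+\int_0^t\bigl(P_{t-s}f(u_s)-P_{t-s}^Nf(U^N_s)\bigr)\,ds+(O_t-O^N_t).
\end{align*}
The third term is already controlled in $C_TL^\infty$ by the bound \eqref{eq:ou-bound} (in fact by the stronger \eqref{eq:ou-bound} in $C_T^{\eps'''}\calC^{\eps'''}$) with rate $N^{-1/2+\eps}$. For the first term, using $u_0\in\calC^{1/2}$ together with the embedding $\calC^{\eps}\hookrightarrow L^\infty$ and the identity $P_t-P_t^N=(\Id-\Pi_N)P_t$, one has $\|(P_t-P_t^N)u_0\|_{L^\infty}\lesssim N^{-1/2+\eps}\|u_0\|_{\calC^{1/2}}$, since $\Id-\Pi_N$ loses (almost) $1/2+\eps$ derivatives at the cost of a factor $N^{-1/2-\eps+\eps'}$ and $P_t$ only smooths. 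Taking $p$-th moments and using \cref{asn:easier}(b) handles this contribution.

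For the main integral term I would split it as
\begin{align*}
\int_0^tP_{t-s}\bigl(f(u_s)-f(U^N_s)\bigr)\,ds+\int_0^t(P_{t-s}-P_{t-s}^N)f(U^N_s)\,ds,
\end{align*}
where in the first piece I may freely replace $P_{t-s}$ by $P_{t-s}^N$ up to another error of the same type, or simply absorb it. The first piece is a Gr\"onwall-type term: $f$ is globally Lipschitz by \cref{asn:easier}, $P^N$ is a contraction on $L^\infty$, and $\tau(s)=s$, so \cref{prop:Gronwall} reduces the estimate of $\E\sup_t\|u_t-U^N_t\|_{L^\infty}^p$ to the estimate of $\E\sup_t\|Z_t\|_{L^\infty}^p$, where $Z$ collects all the remaining terms. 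The genuinely non-trivial term among these is $\int_0^t(P_{t-s}-P_{t-s}^N)f(U^N_s)\,ds$: here I use the a priori bound $\E\|f(U^N)\|_{C_TC(\T)}^q\lesssim 1$ (uniform in $N$, which follows from the regularity bound on $U^N$ proved just before this lemma together with the boundedness of $f$), and the heat-kernel estimate $\|(P_{t-s}-P_{t-s}^N)g\|_{L^\infty}\lesssim\|(\Id-\Pi_N)P_{t-s}g\|_{\calC^\eps}\lesssim N^{-1/2-\eps+\eps'}(t-s)^{-1/4}\|g\|_{C(\T)}$, valid for $t-s\le 1$; integrating the integrable singularity $(t-s)^{-1/4}$ over $[0,t]$ gives a finite constant, so this term is $\lesssim N^{-1/2+\eps}\|f(U^N)\|_{C_TC(\T)}$. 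Taking $p$-th moments and the supremum in $t$ (the bound is uniform in $t$, so the sup is harmless) finishes the estimate of $Z$.

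I expect the main obstacle to be a purely bookkeeping one: arranging the $\eps$'s so that the loss of (almost) $1/2$ derivatives in $\Id-\Pi_N$ — which on the frequency block $|k|>N$ costs $N^{-1/2-\eta}$ for the $\calC^{-1/2-\eta}\to L^\infty$ mapping gap, up to the $\calC^{\eps}\hookrightarrow L^\infty$ embedding — combines with the smoothing of $P_{t-s}$ to leave exactly the integrable singularity $(t-s)^{-1/4+\eps}$ rather than a non-integrable one, and making sure all these estimates are uniform in $N$ and in $t\in[0,T]$. No genuinely new idea beyond the heat-kernel bounds \eqref{eq:HK-1}, the uniform a priori bound on $U^N$, the OU error bound \eqref{eq:ou-bound}, and \cref{prop:Gronwall} should be required.
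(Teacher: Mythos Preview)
Your proposal is correct and follows essentially the same strategy as the paper: decompose the mild formulations, apply \cref{prop:Gronwall} with $S(s,t)$ a heat semigroup and $\tau(s)=s$, and bound the residual $Z$ using \eqref{eq:ou-bound}, the initial-condition estimate, and a semigroup-difference bound on the nonlinear term. The only cosmetic differences are that the paper places $f(u_s)$ rather than $f(U^N_s)$ in the error piece and estimates $\int_0^t(P_{t-s}-P^N_{t-s})f(u_s)\,ds$ via Young's inequality and an explicit Fourier computation of $\|p_{t-s}-p^N_{t-s}\|_{L^2}$ (yielding the sharper rate $N^{-3/2+\eps}$ for that term), whereas your Besov/heat-kernel route gives $N^{-1/2+\eps}$; both suffice.
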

\begin{proof}
    We have that
    \begin{align*}
        u_{t}-U^{N}_{t}&=(P_t-P^N_t)u_0+\int_{0}^{t}P_{t-s}^{N}[f(u_{s})-f(U^{N}_{s})] ds \\&\qquad + \int_{0}^{t}(P_{t-s}-P^{N}_{t-s})f(u_{s}) ds +  O_{t}-O^{N}_{t}.
    \end{align*}
    First, note that
    \begin{align}\label{eq:ic-contribution}
        \|(P_t-P^N_t)u_0\|_{L^\infty}\leq \|u_0-\Pi_Nu_0\|_{L^\infty}\lesssim N^{-1/2+\eps}\|u_0\|_{\mathcal{C}^{1/2}}.
    \end{align}
    Next, using that $e^{-x}\leq x^{-2+\epsilon}$, $x>0$,
    \begin{align}\label{eq:z-bound}
    \int_{0}^{t}\norm{(P_{t-s}-P^{N}_{t-s})f(u_{s})}_{L^{\infty}}ds&=\int_{0}^{t}\norm{(p_{t-s}-p^{N}_{t-s})\ast f(u_{s})}_{L^{\infty}}ds\nonumber\\&\leq \int_{0}^{t}\norm{p_{t-s}-p^{N}_{t-s}}_{L^{2}}\norm{f(u_{s})}_{L^{2}}ds\nonumber\\&\lesssim \norm{f}_{L^{\infty}(\R)}\int_{0}^{t}\sqrt{\sum_{\abs{k}>N}e^{-8\pi^2k^2 (t-s)}}ds \nonumber\\&\lesssim  \norm{f}_{L^{\infty}(\R)}N^{-3/2+\epsilon}\int_{0}^{t}(t-s)^{-1+\epsilon/2}ds\\&\lesssim  \norm{f}_{L^{\infty}(\R)}N^{-3/2+\epsilon}.
    \end{align}
    Then an application of 
    \cref{prop:Gronwall} for $Z_{t}=(P_t-P^N_t)u_0+\int_{0}^{t}(P_{t-s}-P^{N}_{t-s})f(u_{s}) ds +  O_{t}-O^{N}_{t}$, $\tau(s)=s$, $S(s,t)=P^{N}_{t-s}$ together with \eqref{eq:ic-contribution}, \eqref{eq:z-bound}, and \eqref{eq:ou-bound}, yields the claim. 
\end{proof}

As mentioned, the main effort of the section is devoted in proving rate (almost) $1$ for the temporal error $\tilde U^N-\tilde V^{M,N}$, formulated as follows.
\begin{lemma}\label{lem:main}
Assume the setting of Theorem \ref{thm:main}.
    Then for any $\epsilon>0$ and $p\geq 1$ there exists a constant $C=C(T,\epsilon, p,K,\cM)$ such that 
\begin{align*}
    \big(\E\sup_{t\in[0,T]}\norm{\tilde{U}^{N}_{t}-\tilde{V}^{M,N}_{t}}_{L^{\infty}(\mathbb{T})}^{p}\big)^{1/p}\leq C M^{-1+\epsilon}.
\end{align*}
\end{lemma}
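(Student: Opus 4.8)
The plan is to set up a Gr\"onwall-type argument for the difference $e^{M,N}_t := \tilde U^N_t - \tilde V^{M,N}_t$ and reduce the whole estimate to controlling the time-discretisation defect
\[
R^{M,N}_t := \int_0^t P^N_{t-s}\big(f(\tilde U^N_{k_M(s)}) - f(\tilde V^{M,N}_{k_M(s)})\big)\,ds \quad\text{vs.}\quad \int_0^t P^N_{t-s} f(\tilde U^N_s)\,ds,
\]
i.e.\ writing $e^{M,N}_t = \int_0^t P^N_{t-s}\big(f(\tilde U^N_{k_M(s)}) - f(\tilde V^{M,N}_{k_M(s)})\big)\,ds + Z^{M,N}_t$ with the ``frozen-time'' forcing error
\[
Z^{M,N}_t := \int_0^t P^N_{t-s}\big(f(\tilde U^N_s) - f(\tilde U^N_{k_M(s)})\big)\,ds.
\]
By \cref{prop:Gronwall} (applied with $F=f$ acting as a Nemytskii map on $L^\infty$, which is Lipschitz by \cref{asn:easier}(a), $S(s,t)=P^N_{t-s}$ uniformly bounded on $L^\infty$, $\tau=k_M$), it suffices to prove $\big(\E\sup_{t\le T}\|Z^{M,N}_t\|_{L^\infty}^p\big)^{1/p}\lesssim M^{-1+\eps}$, uniformly in $N$. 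Since $Z^{M,N}$ is a continuous $L^\infty$-valued process, by Kolmogorov it is in fact enough to bound $\big(\E\|Z^{M,N}_t\|_{\calC^\kappa}^p\big)^{1/p}$ for a fixed small $\kappa>0$ and all $t$, plus a H\"older-in-$t$ increment bound of the same order; and by Besov embedding it is enough to bound, for each fixed $x\in\T$, $j\ge -1$, the quantity $\big(\E|\Delta_j Z^{M,N}_t(x)|^p\big)^{1/p}$ by $2^{-j\kappa}M^{-1+\eps}$ (times the $t$-increment factors). This is where the stochastic sewing lemma \cref{lem:ss} enters.

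The core estimate is thus: fix $x,j$, and for $(s,t)\in[0,T]_{\le}$ define the two-parameter germ
\[
A_{st} := \E_s\!\int_s^t \Delta_j\Big(P^N_{t-r}\big(f(\tilde U^N_r) - f(\tilde U^N_{k_M(r)})\big)\Big)(x)\,dr,
\]
so that $\Delta_j Z^{M,N}_t(x) = \mathcal A_{0t}$ up to the sewing error. One first freezes the noise: on an interval $(s,t)$ of length comparable to $h$, Taylor-expand $f(\tilde U^N_r) - f(\tilde U^N_{k_M(r)}) \approx f'(\tilde U^N_{k_M(r)})(\tilde U^N_r - \tilde U^N_{k_M(r)})$, and further expand the increment $\tilde U^N_r - \tilde U^N_{k_M(r)}$ using \eqref{eq:tilde-UN}: its drift part is $O(h)$ (bounded $f$, contraction of $P^N$), so only the OU increment $O_r - O_{k_M(r)}$ carries the ``bad'' $1/4$-H\"older roughness. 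The decisive gain comes from \emph{not} estimating $f'(\cdot)(O_r-O_{k_M(r)})$ in $L^\infty$ but rather inserting the semigroup $P^N_{t-r}$ and using that $O_r-O_{k_M(r)}$ is small in the negative-regularity space $\calC^{-1/2+\eps}$ on scale $h^{1/2}$ — i.e.\ one pays $(t-r)^{-\theta/2-\eps}$ from the heat kernel against $h^{(1+\theta)/2}$-type smallness of the OU increment as quantified in \eqref{eq:s-t-bound} / \eqref{eq:ou-moment-bounds}, which integrated over $r$ gives the right power of $h$. Concretely, I expect to verify the two sewing hypotheses with $\eps_1$ close to $1/2$ (so $1/2+\eps_1$ close to $1$) and $\delta_1$ small, giving \eqref{eq:SSL-cond1}; and for \eqref{eq:SSL-cond2}, the conditional-expectation cancellation $\E_s[\delta A_{sut}]$ kills the leading term because $\E_s[f'(\tilde U^N_{k_M(r)})(O_r - O_{k_M(r)})]$, after conditioning, is lower order — here the martingale structure of $O$ and the fact that $k_M(r)$ is a grid point between $s$ and $u$ for $r\ge u$ are what one exploits, yielding exponent $1+\eps_2$ with $\eps_2$ small. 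Summing the resulting $2^{-j\kappa}h^{1-\eps}$ bounds over $j$ (Besov), taking $\sup_t$ via Kolmogorov in $t$ (the $t$-increments are handled by the same germ with $\delta_1,\delta_2$ absorbing the boundary singularity, as in \cref{prop:ou}), and finally feeding into \cref{prop:Gronwall}, gives the claim.

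The main obstacle is the second sewing bound \eqref{eq:SSL-cond2}: one must show the nonlinear, non-Markovian increment $f(\tilde U^N_r)-f(\tilde U^N_{k_M(r)})$ — once the drift remainder and the second-order Taylor term (controlled by $\|f''\|_\infty$ and \eqref{eq:ou-moment-bounds}, each of order $h^{1-\eps}$ already without cancellation) are peeled off — has enough conditional cancellation that $\delta A_{sut}$ gains a full extra power of $|t-s|$ beyond the naive $|t-s|^{1/2+\eps_1}$. This requires carefully choosing where to freeze $\tilde U^N$ (at $k_M(r)$ vs.\ at $s$), tracking that the replacement errors are of order $h^{1/2}|t-s|^{1/2}$ in the right negative-order norm, and using $\E_s[O_r-O_{k_M(r)}] $-type identities together with the heat-smoothing trade-off \eqref{eq:s-t-bound}; the bookkeeping of the $(T-t)^{-\delta}$ weights through \cref{lem:ss} and then through the $\sup_t$ and Gr\"onwall steps is the technically heaviest part. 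A secondary point requiring care is that all constants must be uniform in $N$, which is why one consistently uses $\|P^N_t\|_{L^\infty\to L^\infty}\le 1$ and the $N$-free OU bounds rather than anything spectral.
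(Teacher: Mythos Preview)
Your overall architecture---reduce to bounding $Z^{M,N}_t=\int_0^t P^N_{t-s}[f(\tilde U^N_s)-f(\tilde U^N_{k_M(s)})]ds$ via \cref{prop:Gronwall}, then apply stochastic sewing to $\Delta_j$ of this at fixed $x$---matches the paper. However, two genuine gaps prevent your plan from reaching rate $M^{-1+\eps}$.

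First, your germ has $P^N_{t-r}$ with $t$ the sewing endpoint; this makes the integrand depend on $t$ and you lose the trivial cancellation $\E_s[\delta A_{sut}]=0$. The paper fixes a terminal time $R$, sews on $[0,R]$ with $P^N_{R-r}$, and handles the blow-up at $r=R$ via the weights in \cref{lem:ss}; with the germ $A_{st}=\E_s\int_s^t\Delta_jP^N_{R-r}[f(\tilde O_r)-f(\tilde O_{k_M(r)})](x)dr$ one gets $\E_s[\delta A_{sut}]=0$ for free by the tower property. So your ``main obstacle'' (the second sewing condition) actually vanishes---but only after this correction and the next one.

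Second, and more seriously, you propose to Taylor-expand $f(\tilde U^N_r)-f(\tilde U^N_{k_M(r)})$ and claim the quadratic remainder is ``of order $h^{1-\eps}$ already without cancellation''. This is false: $\|\tilde U^N_r-\tilde U^N_{k_M(r)}\|_{L^\infty}\sim h^{1/4}$, so the quadratic term is $O(h^{1/2})$ in $L^\infty$, and even after exploiting negative regularity (one factor in $\calC^{-1/2+\eps}$ of size $h^{1/2}$, the other in $L^\infty$ of size $h^{1/4}$) you get at best $h^{3/4}$. Conditioning does not help either: the conditional variance of $O_r-O_{k_M(r)}$ is $\sim h^{1/2}$ and deterministic, so sewing gains nothing on this term. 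Your approach would cap at rate $1/2$.

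The paper sidesteps both issues by \emph{not} Taylor-expanding and \emph{not} working with $\tilde U^N$ directly. It first uses Girsanov's theorem (\cref{cor:main-temp}) to replace $\tilde U^N$ by the shifted OU process $\tilde O=O+P^N_\cdot u_0$, which is Gaussian. For Gaussian $\tilde O$ the conditional law is explicit: $\E_u[f(\tilde O_r)]=(P^{\R}_{Q(r-u)}f)(P_{r-u}\tilde O_u)$. This yields a decomposition of $\E_u[f(\tilde O_r)-f(\tilde O_{k_M(r)})]$ into (i) the same smoothed $f$ evaluated at $P_{r-u}\tilde O_u$ versus $P_{k_M(r)-u}\tilde O_u$, controlled via \eqref{eq:s-t-bound} at order $h^{1-\eps}$ in $\calC^{-1/2+\eps}$, and (ii) two different smoothings $P^{\R}_{Q(r-u)}f-P^{\R}_{Q(k_M(r)-u)}f$ at the same point, controlled by $|Q(r-u)-Q(k_M(r)-u)|\lesssim h^{1-\eps}(k_M(r)-u)^{-1/2+\eps}$ (see \eqref{eq:Q-est}). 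Both pieces give the full rate with \emph{only} the first sewing condition to verify. The nonlinear conditional cancellations you sketch for $\tilde U^N$ are neither needed nor, in the form you describe, sufficient.
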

Before the proof of \cref{lem:main} we prove a number of temporal error estimates.
Our strategy is as follows. 
We decompose the error as 
\begin{align}\label{eq:error-decomp}
    \tilde{U}^{N}_{t}-\tilde{V}^{M,N}_{t}&=\int_{0}^{t}P^{N}_{t-s}[f(\tilde{U}^{N}_{s})-f(\tilde{V}^{M,N}_{k_{M}(s)})]ds \nonumber
    \\&=\int_{0}^{t}P^{N}_{t-s}[f(\tilde{U}^{N}_{s})-f(\tilde{U}^{N}_{k_{M}(s)})]ds+\int_{0}^{t}P^{N}_{t-s}[f(\tilde{U}^{N}_{k_{M}(s)})-f(\tilde{V}^{M,N}_{k_{M}(s)})]ds 
\end{align}

The second term on the right-hand side of \eqref{eq:error-decomp} is a buckling term (i.e. treated by Gr\"onwall's lemma).
The first term is the crucial one in determining the temporal rate. First we bound this term when replacing $\tilde{U}^N$ by a simpler process.

\begin{proposition}\label{prop:main-prop}
    Let \cref{asn:easier} hold.
    Let $p\geq 1$ and $\epsilon\in(0,1/2)$.
    Then 
  there exists a constant $C=C(T,p,\epsilon, K, \cM)$ such that for all $0\leq s\leq t\leq R\leq T$ it holds that
    \begin{align}\label{eq:f-goal}
    \MoveEqLeft
        \paren[\bigg]{\E\norm[\bigg]{\int_{s}^{t}P^{N}_{R-r}[f(O_{r}+P^{N}_{r}u_{0})-f(O_{k_{M}(r)}+P^{N}_{k_{M}(r)}u_{0})]dr}_{L^{\infty}}^p}^{1/p}\nonumber\\&\leq C M^{-1+2\epsilon}|t-s|^{1/4+\eps/2}.
    \end{align}
\end{proposition}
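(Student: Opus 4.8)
The plan is to use the stochastic sewing lemma (\cref{lem:ss}) applied componentwise in $x\in\T$, or more precisely to a Besov norm of the increment process. Fix $R\in[0,T]$ and define, for $(s,t)\in[0,R]_\leq$, the $L^\infty(\T)$-valued random variable
\begin{align*}
    \mathcal{A}_t := \int_{0}^{t}P^{N}_{R-r}\big[f(O_{r}+P^{N}_{r}u_{0})-f(O_{k_{M}(r)}+P^{N}_{k_{M}(r)}u_{0})\big]\,dr,
\end{align*}
and the natural two-parameter germ
\begin{align*}
    A_{st}:=\E_s\Big[\int_{s}^{t}P^{N}_{R-r}\big[f(O_{r}+P^{N}_{r}u_{0})-f(O_{k_{M}(r)}+P^{N}_{k_{M}(r)}u_{0})\big]\,dr\Big].
\end{align*}
Because the scheme is piecewise constant, on each subinterval between gridpoints the increment $O_r - O_{k_M(r)}$ is a centered Gaussian (conditionally on $\F_{k_M(r)}$ and on $u_0$) of size $\sim M^{-1/4}$ in $L^\infty$ but of size $\sim M^{-1/2}$ in $\calC^{-1/2}$, by \cref{prop:ou}. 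The whole point is that the heat kernel $P^N_{R-r}$ in front lets us pay for measuring $f(O_r+\cdots)-f(O_{k_M(r)}+\cdots)$ in the negative-regularity space $\calC^{-1/2+\eps}$, via the smoothing bound \eqref{eq:HK-1}: $\|P^N_{R-r}w\|_{L^\infty}\lesssim (R-r)^{-1/4-\eps/2}\|w\|_{\calC^{-1/2+\eps}}$ (uniformly in $N$, since $P^N_t = P_t\Pi_N$ and $\Pi_N$ is bounded on Besov spaces).

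First I would verify the one-parameter bound \eqref{eq:SSL-cond1}. Here one uses a conditional-Gaussian moment estimate: a first-order Taylor expansion $f(a+b)-f(a) = f'(a)b + O(\|f''\|_\infty b^2)$ together with $\E_{k_M(r)}[f'(O_{k_M(r)}+\cdots)(O_r-O_{k_M(r)})]$ — the delicate term — which is \emph{not} zero but whose conditional expectation, after applying $P^N_{R-r}$, is small. Actually the cleanest route is to not extract cancellation at the level of $A_{st}$ itself but only at the level of $\E_s\delta A_{sut}$; for \eqref{eq:SSL-cond1} it suffices to estimate $\|A_{st}\|_{L^p(\Omega;L^\infty)}$ crudely. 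Using $|f(O_r+\cdots)-f(O_{k_M(r)}+\cdots)|\lesssim \|f\|_{C^1_b}\, |O_r-O_{k_M(r)}|$ but measuring this difference in $\calC^{-1/2+\eps}$ via the composition/product estimates together with $\|O_r - O_{k_M(r)}\|_{\calC^{-1/2+\eps}}\lesssim M^{-1/2+}$ (plus the $u_0$ increment which is smoother), and then $\int_s^t (R-r)^{-1/4-\eps/2}\,dr \lesssim (t-s)^{3/4-\eps/2}$ when $t-s\le R-t$, gives $\|A_{st}\|\lesssim M^{-1/2+\eps}(t-s)^{3/4-\eps/2}$. Hmm — that only gives $\delta_1=0$, $\eps_1 = 1/4-\eps/2$, and final rate $M^{-1/2}$, not $M^{-1}$. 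So a genuine cancellation at the level of $A_{st}$ is needed: one must use that $\E_{k_M(r)}[O_r - O_{k_M(r)}]=0$ for $s=k_M(r)$ is \emph{not} available for general $r$, but $\E_s$ of the increment over a full subinterval, composed with the semigroup, does produce the extra half-power. Concretely, Taylor-expand to second order around $f(O_{k_M(r)}+\cdots)$, note the first-order term integrates against $\E_s(O_r - O_{k_M(r)})$ which, since $s\le k_M(r)$ on most of the interval, equals $P_{r-k_M(r)}(\text{something})$-type corrections of order $M^{-1/2}$ in $\calC^{-1/2+\eps}$ after one more heat smoothing; the second-order term is $O(M^{-1/2})$ in a suitable norm directly.

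Then I would verify \eqref{eq:SSL-cond2}: $\delta A_{sut} = A_{st}-A_{su}-A_{ut}$ reduces, after cancellation of the $\int_u^t$ pieces (using the tower property $\E_s\E_u = \E_s$), to a term involving $\E_s[(\text{increment over }[u,t])]$ where now the full conditional cancellation $\E_s[O_r - O_{u}] $ can be exploited together with the fact that $k_M$ is constant on subintervals. This is where the second power of $M^{-1/2}$ — hence the final $M^{-1}$ — comes from, combined with the heat-kernel smoothing to absorb the $(R-r)^{-1/4-\eps/2}$ singularity into an integrable one with room to spare, yielding $\delta_2$ slightly positive and $\eps_2$ close to $1/4$, so that $1+\eps_2-\delta_2>0$ holds. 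Applying \cref{lem:ss} with these data and the final conclusion $\|\mathcal{A}_{st}\|_{L^p}\lesssim M^{-1+\eps}(|t-s|^{1/2+\eps_1-\delta_1}+|t-s|^{1+\eps_2-\delta_2})$, then noting $\mathcal{A}_{st}$ is precisely the integral in \eqref{eq:f-goal} and that $1/4+\eps/2$ is the smaller of the two exponents on the relevant range, gives the claim. Throughout, one works with a sufficiently large but fixed $p$ and then passes to general $p$ by Besov embedding exactly as in the proof of \cref{prop:ou}, and all heat-kernel constants are uniform in $N$ because $\Pi_N$ is a Fourier multiplier bounded on every $\calC^\theta$.

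The main obstacle, I expect, is the first step — getting the \emph{single} $M^{-1/2}$ gain into the one-parameter bound \eqref{eq:SSL-cond1} honestly. Naively the increment $f(O_r+\cdots)-f(O_{k_M(r)}+\cdots)$ is only $O(M^{-1/4})$ in any function norm, and it is genuinely that large pointwise; the gain to $M^{-1/2}$ is available only in negative-regularity norms and only because $f$ is applied pointwise to a process whose \emph{increment} is rough-but-small-in-$\calC^{-1/2+\eps}$ — one has to be careful that $f(O_r+\cdots)-f(O_{k_M(r)}+\cdots)$, as opposed to $O_r - O_{k_M(r)}$ itself, still has a good $\calC^{-1/2+\eps}$ bound; this uses a paraproduct/Taylor argument leveraging that $f\in C^2_b$ and that $O_r, O_{k_M(r)}$ sit in $\calC^{1/2-}$, so $f'(\cdot)$ is multiplied against $\calC^{-1/2+\eps}$, a well-defined product since $(1/2-)+(-1/2+\eps)>0$. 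Managing this composition estimate together with the conditional-Gaussian expectations (the stochastic sewing requires bounds on $\E_s$, which interacts awkwardly with the nonlinearity $f$) is the technical heart of the proof.
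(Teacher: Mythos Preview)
Your overall architecture (stochastic sewing with the germ $A_{st}=\E_s[\int_s^t P^N_{R-r}(f(\tilde O_r)-f(\tilde O_{k_M(r)}))\,dr]$, then a Besov-block reduction) matches the paper, and your paraproduct observation that $f(\tilde O_r)-f(\tilde O_{k_M(r)})$ can be measured in $\calC^{-1/2+\eps}$ via $f'\in\calC^{1/2-}$ times $\tilde O_r-\tilde O_{k_M(r)}\in\calC^{-1/2+\eps}$ is exactly what the paper uses. But you have the logic of where the $M^{-1}$ comes from backwards, and this is a genuine gap.

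With your choice of germ, the tower property gives $\E_s[\delta A_{sut}]=\E_s\big(\E_s[\int_u^t\cdots]-\E_u[\int_u^t\cdots]\big)=0$ \emph{identically}. So condition \eqref{eq:SSL-cond2} holds with $C_2=0$, and contrary to what you wrote, no ``second power of $M^{-1/2}$'' is extracted there. The entire rate $M^{-1+2\eps}$ must therefore be produced in the one-parameter bound \eqref{eq:SSL-cond1}, and your crude first pass $\|A_{ut}\|\lesssim M^{-1/2+\eps}(\cdots)$ is indeed insufficient. The paper's mechanism for upgrading this is a case split on $|t-u|\lessgtr 3M^{-1}$. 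For $|t-u|\leq 3M^{-1}$ your paraproduct estimate gives a factor $M^{-1/2+\eps}(t-u)$, and one simply trades $(t-u)^{1/2-\eps}\leq (3M^{-1})^{1/2-\eps}$ to land on $M^{-1+2\eps}(t-u)^{1/2+\eps}$. For $|t-u|>3M^{-1}$ the key move---which your Taylor-expansion sketch does not capture---is to compute the conditional expectation $\E_u$ \emph{exactly} using Gaussianity: $\E_u[f(\tilde O_r)]=(P^{\R}_{Q(r-u)}f)(P_{r-u}\tilde O_u)$, where $Q(\cdot)$ is the variance of the OU increment. This converts the stochastic problem into two deterministic semigroup differences: one in the argument $P_{r-u}\tilde O_u-P_{k_M(r)-u}\tilde O_u$, controlled by \eqref{eq:s-t-bound} with $\theta$ close to $1$ to extract $(r-k_M(r))^{1-2\eps}(k_M(r)-u)^{-1/2+\eps}$; and one in the smoothing $P^{\R}_{Q(r-u)}-P^{\R}_{Q(k_M(r)-u)}$, controlled by \eqref{eq:HK-2} and the variance increment bound $Q(r-u)-Q(k_M(r)-u)\lesssim (r-k_M(r))^{1-\eps}(k_M(r)-u)^{-1/2+\eps}$. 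In both pieces the factor $(r-k_M(r))^{1-O(\eps)}\leq M^{-1+O(\eps)}$ appears directly, and the remaining $(k_M(r)-u)^{-1/2+\eps}$ integrates to $(t-u)^{1/2+\eps}$. That is how \eqref{eq:claim-blow-up} is obtained with the full power $M^{-1+2\eps}$.
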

\begin{proof}
  To simplify notation denote the shifted OU process by $\tilde{O}_{t}:=O_{t}+P_{t}^{N}u_{0}$, $t\in[0,T]$. 
To prove the desired estimate, it suffices to prove that for any $j\geq -1$, $x\in\T$, $0\leq s\leq t\leq R\leq T$, one has
\begin{align}\label{eq:f-actual-goal}
\MoveEqLeft
    \paren[\bigg]{\E\abs[\bigg]{\int_{s}^{t} \Delta_{j}P_{R-r}^{N}[f(\tilde O_{r})-f(\tilde O_{k_{M}(r)})](x)dr}^p}^{1/p}
    \lesssim 
    M^{-1+2\epsilon} 2^{-j\epsilon}(t-s)^{1/4+\epsilon/2}.
\end{align} 
Indeed, \eqref{eq:f-actual-goal} implies 
\begin{align*}
    \MoveEqLeft
        \paren[\bigg]{\E\norm[\bigg]{\int_{s}^{t}P^{N}_{R-r}[f(\tilde O_{r})-f(\tilde O_{k_{M}(r)})]dr}_{B^{\epsilon}_{p,p}}^p}^{1/p}
        \lesssim 
        M^{-1+2\epsilon}(t-s)^{1/4+\epsilon/2}, 
    \end{align*}
    yielding the claim by using the embedding $B^{\epsilon}_{p,p}\hookrightarrow\calC^{\epsilon-1/p}\hookrightarrow L^{\infty}$ for $p$ large enough.
    
    To prove \eqref{eq:f-actual-goal}, we consider $j\geq-1$, $x\in\T$, $R\leq T$ fixed and apply Lemma \ref{lem:ss}
    to the germ, $0\leq s\leq t\leq R$,
    \begin{align}
        A_{st}=\E_{s}\int_{s}^{t}\Delta_{j}P_{R-r}^{N}[f(\tilde O_{r})-f(\tilde O_{k_{M}(r)})](x)dr.
    \end{align}
    We have that for $s<u<t$,
    \begin{align}
        \delta A_{sut}&=\E_{s}\int_{u}^{t}\Delta_j P_{R-r}^{N}[f(\tilde O_{r})-f(\tilde O_{k_{M}(r)})](x)dr\nonumber\\&\qquad-\E_{u}\int_{u}^{t}\Delta_j P_{R-r}^{N}[f(\tilde O_{r})-f(\tilde O_{k_{M}(r)})](x)dr.
    \end{align}
    Thus we obtain that $\E_{s}[\delta A_{sut}]=0$.
    Therefore the condition \eqref{eq:SSL-cond2} is satisfied with $C_2=0$. We now verify \eqref{eq:SSL-cond1}.
    We claim that, uniformly in $x,j$ and for all $0\leq u<t<R$ with $|t-u|\leq |R-t|$ one has
    \begin{align}\label{eq:claim-blow-up}
        \|A_{ut}\|_{L^p(\Omega)}\lesssim 2^{-j\epsilon}  M^{-1+2\epsilon} (R-t)^{-1/4-\epsilon/2}(t-u)^{1/2+\epsilon}.
    \end{align} 
    
First we consider the case $|t-u|\leq 3 TM^{-1}$. We are going to employ the semigroup estimates and the regularity bounds for the OU process \eqref{eq:ou-reg}. Notice that $\tilde{O}$ satisfies the bounds \eqref{eq:ou-reg}, \eqref{eq:s-t-bound} with $O$ replaced by $\tilde{O}$, since by the semigroup estimates \eqref{eq:HK-1} one has
for all $s,t\in[0,T]$,
\begin{align}\label{eq:u0-bound1}
\norm{P_{t+s}u_{0}-P_{t}u_{0}}_{\calC^{-1/2+\epsilon}}\lesssim_\theta s^{(1+\theta)/2}t^{-\theta/2-\epsilon/2}\norm{u_{0}}_{\calC^{1/2}}
\end{align}
for any 
$\theta\in[0,1]$ and
\begin{align}\label{eq:u0-bound2}
\norm{P_{t+s}u_{0}-P_{t}u_{0}}_{\calC^{-1/2+\theta}}\lesssim_\theta s^{(1-\theta)/2}\norm{u_{0}}_{\calC^{1/2}}
\end{align}
for $\theta\in[0,1]$. Furthermore, for $\theta\in (0,1/2)$, $q\geq 1$, using the equivalence of the norms $\norm{u}_{\calC^{\theta}}\eqsim \norm{u}_{\infty} + \sup_{x\neq y}\frac{\abs{u(x)-u(y)}}{\abs{x-y}^{\theta}}$, as well as \eqref{eq:ou-reg},
we can bound the composition by 
\begin{align}\label{eq:comp-b}
  \E\sup_{r\in[0,R]}\norm{f'(\lambda\tilde{O}_r+(1-\lambda)\tilde{O}_{k_{M}(r)})}_{\calC^{\theta}}^q
  \lesssim \norm{f'}_{C_{b}^{1}}^q\E(1+\norm{\tilde{O}}_{C_T\calC^{\theta}})^q \lesssim_{q,\theta} 1.
\end{align} 
Using these bounds, we obtain for $u,t$ such that $\abs{t-u}\leq 3M^{-1}$,
    \begin{align}
    \MoveEqLeft
    \paren[\bigg]{\E\abs[\bigg]{\int_{u}^{t}\Delta_{j} P_{R-r}^{N}[f(\tilde O_{r})-f(\tilde O_{k_{M}(r)})](x)dr}^{p}}^{1/p}\nonumber
    \\&\lesssim 2^{-j\epsilon}\paren[\bigg]{\E\norm[\bigg]{\int_{u}^{t}P_{R-r}^{N}[f(\tilde O_{r})-f(\tilde O_{k_{M}(r)})]dr}_{\calC^{\epsilon}}^{p}}^{1/p}\nonumber
    \\&\lesssim 2^{-j\epsilon}\int_{u}^{t}(R-r)^{-1/4-\epsilon/2} \nonumber
    \\&\hspace{0.2cm}\times\paren[\bigg]{\E\norm[\bigg]{\paren[\bigg]{\int_{0}^{1}f'(\lambda \tilde O_{r}+(1-\lambda)\tilde O_{k_{M}(r)})d\lambda}(\tilde O_{r}-\tilde O_{k_{M}(r)})}_{\calC^{-1/2}}^p}^{1/p}dr\nonumber
    \\&\lesssim 2^{-j\epsilon}(R-t)^{-1/4-\epsilon/2} (t-u)\nonumber
    \\&\hspace{0.2cm}\times\paren[\bigg]{\E\sup_{r\in[0,T]}\norm[\bigg]{\int_{0}^{1}f'(\lambda \tilde O_{r}+(1-\lambda)\tilde O_{k_{M}(r)})d\lambda}_{\calC^{1/2-\epsilon/2}}^{2p}}^{1/2p}\nonumber
    \\&\hspace{0.2cm}\times\paren[\big]{\E\sup_{r\in[0,T]}\norm{(\tilde O_{r}-\tilde O_{k_{M}(r)})}_{\calC^{-1/2+\epsilon}}^{2p}}^{1/2p}\nonumber
    \\&\lesssim 2^{-j\epsilon}(R-t)^{-1/4-\epsilon/2}(t-u)(r-k_{M}(r))^{1/2-\epsilon}\nonumber
    \nonumber
   \\& \lesssim 2^{-j\epsilon}(R-t)^{-1/4-\epsilon/2}(t-u) M^{-1/2+\epsilon} \nonumber
   \\&\lesssim 2^{-j\epsilon}(R-t)^{-1/4-\epsilon/2}(t-s)^{1/2+\epsilon} M^{-1+2\epsilon},\label{eq:long-smallt-s}
    \end{align}
    using $\abs{t-u}\leq 3TM^{-1}$ in the last inequality. This shows that \eqref{eq:claim-blow-up} holds for $\abs{t-u}\leq 3M^{-1}$.
    
    Next, we consider $\abs{t-u}>3TM^{-1}$. Let $t'=k_{M}(u)+\frac{3T}{M}$ be the second smallest grid point bigger than $u$. Note that this implies that for any $r\geq t'$ one has $r-u\geq 2TM^{-1}$ and $k_M(r)-u\geq(r-u)/2$.
    Then we can decompose
    \begin{align*}
    \MoveEqLeft
        \int_{u}^{t}\Delta_j P_{R-r}^{N}[f(\tilde O_{r})-f(\tilde O_{k_{M}(r)})](x)dr
        \\&=\int_{u}^{t'}\Delta_j P_{R-r}^{N}[f(\tilde O_{r})-f(\tilde O_{k_{M}(r)})](x)dr+\int_{t'}^{t}\Delta_j P_{R-r}^{N}[f(\tilde O_{r})-f(\tilde O_{k_{M}(r)})])(x)dr,
    \end{align*}
    where the first summand we can cope with as above, because $(t'-u)\leq 3TM^{-1}$. The conditional expectation of the second summand, we rewrite as follows
    \begin{align}
    \MoveEqLeft
        \E_{u}\bigg[\int_{t'}^{t}\Delta_j P_{R-r}^{N}[f(\tilde O_{r})-f(\tilde O_{k_{M}(r)})](x)dr\bigg]
        \nonumber\\&= \int_{t'}^{t}\Delta_j P_{R-r}^{N}[(P^{\R}_{Q(r-u)}f)(P_{r-u}\tilde O_{u})-(P^{\R}_{Q(k_{M}(r)-u)}f)(P_{k_{M}(r)-u}\tilde O_{u})](x)dr\label{eq:conditional-easy}
    \end{align} 
    using that
    \begin{align*}
        \tilde O_{r}=P_{r-u}\tilde O_{u}+\int_{u}^{r}\int_{\mathbb{T}}p_{r-v}(\cdot-y)\xi(dv,dy)
    \end{align*}
    and that for random variables $X,Y$ with $X$ being $\F_{u}$-measurable and $Y$ being independent of $\F_{u}$ and centered Gaussian with variance $\sigma$, we have that $\E_{u}[f(X+Y)]=(P^{\R}_{\sigma}f)(X)$, where $P^{\R}$ denotes the heat-semigroup on $\R$. Above we denote by $Q(r-u)=\E[(\int_{u}^{r}\int_{\mathbb{T}}p_{r-v}(x-y)\xi(dv,dy))^2]$ the variance, which only depends on the time distance $r-u$ due to stationarity and also does not depend on $x$. Then we decompose further
    \begin{align}
    \MoveEqLeft
        \int_{t'}^{t}\Delta_j P_{R-r}^{N}[(P^{\R}_{Q(r-u)}f)(P_{r-u}\tilde O_{u})-(P^{\R}_{Q(k_{M}(r)-u)}f)(P_{k_{M}(r)-u}\tilde O_{u})](x)dr\nonumber
        \\&=\int_{t'}^{t}\Delta_j P_{R-r}^{N}[(P^{\R}_{Q(r-u)}f)(P_{r-u}\tilde O_{u})-(P^{\R}_{Q(r-u)}f)(P_{k_{M}(r)-u}\tilde O_{u})](x)dr\label{eq:s1}
        \\&\quad+\int_{t'}^{t}\Delta_j P_{R-r}^{N}[[(P^{\R}_{Q(r-u)}f)-(P^{\R}_{Q(k_{M}(r)-u)}f)](P_{k_{M}(r)-u}\tilde O_{u})](x)dr.\label{eq:s2}
    \end{align}
    For the first summand \eqref{eq:s1} we
    use that, similarly to \eqref{eq:comp-b} one has
    \begin{align}\label{eq:comp-b2}
  \E\sup_{r\in[0,R]}\norm{(P^{\R}_{Q(r-u)}f)'(\lambda P_{r-u}\tilde O_{u}+(1-\lambda)P_{k_{M}(r)-u}\tilde O_{u})}_{\calC^{\theta}}^q
  \lesssim_{q,\theta} 1.
\end{align} 
Using this and the bound \eqref{eq:s-t-bound} for $\tilde{O}$ we obtain
    \begin{align}
        &\paren[\bigg]{\E\abs[\bigg]{\int_{t'}^{t}\Delta_j P_{R-r}^{N}[(P^{\R}_{Q(r-u)}f)(P_{r-u}\tilde O_{u})-(P^{\R}_{Q(r-u)}f)(P_{k_{M}(r)-u}\tilde O_{u})](x)dr}^{p}}^{1/p}\nonumber
        \\&\lesssim 2^{-j\epsilon}\paren[\bigg]{\E\norm[\bigg]{\int_{t'}^{t} P_{R-r}^{N}\bigg[\paren[\bigg]{\int_{0}^{t}(P^{\R}_{Q(r-u)}f)'(\lambda P_{r-u}\tilde O_{u}+(1-\lambda)P_{k_{M}(r)-u}\tilde O_{u})d\lambda} \nonumber
        \\&\hspace{5cm}\times(P_{r-u}\tilde O_{u}-P_{k_{M}(r)-u}\tilde O_{u})\bigg]dr}_{\calC^{\epsilon}}^{p}}^{1/p}\nonumber
        \\&\lesssim 2^{-j\epsilon}\int_{t'}^{t}\paren[\bigg]{(R-r)^{-1/4-\epsilon/2}\nonumber
        \\&\hspace{1cm}\times\paren[\bigg]{\E\norm[\bigg]{\int_{0}^{t}(P^{\R}_{Q(r-u)}f)'(\lambda P_{r-u}\tilde O_{u}+(1-\lambda)P_{k_{M}(r)-u}\tilde O_{u})d\lambda}_{\calC^{1/2-\epsilon/2}}^{2p}}^{1/2p}}\nonumber
        \\&\hspace{1cm}\times\paren[\big]{\paren{\E\norm{P_{r-u}\tilde O_{u}-P_{k_{M}(r)-u}\tilde O_{u}}_{\calC^{-1/2+\epsilon}}^{2p}}^{1/2p}}dr\nonumber
        \\&\lesssim 2^{-j\epsilon}
        \int_{t'}^{t}(R-r)^{-1/4-\epsilon/2} (r-k_{M}(r))^{1-2\epsilon}(k_{M}(r)-u)^{-1/2+\epsilon}dr\nonumber
        \\&\lesssim 2^{-j\epsilon}M^{-1+2\epsilon}
        \int_{t'}^{t}(R-r)^{-1/4-\epsilon/2} (r-u)^{-1/2+\epsilon}dr\nonumber
        \\&\lesssim 2^{-j\epsilon}M^{-1+2\epsilon} (R-t)^{-1/4-\epsilon/2}(t-u)^{1/2+\epsilon}\label{eq:I1-easy}
    \end{align}
    where we used that $k_{M}(r)-u\geq (r-u)/2$ for $r\in [t',t]$.
    For the second summand \eqref{eq:s2}, we use the following estimate on $Q$, where $u\leq l \leq r$,
    \begin{align}\label{eq:Q-est}
    \MoveEqLeft
        Q(r-u)-Q(l-u)\nonumber\\&=\E\bigg[\paren[\bigg]{\int_{u}^{r}\int p_{r-t}(\cdot-y)\xi(dt,dy)}^2\bigg]-\E\bigg[\paren[\bigg]{\int_{u}^{l}\int p_{l-t}(\cdot-y)\xi(dt,dy)}^2\bigg]\nonumber
        \\&=\int_{u}^{r}\norm{p_{r-t}}_{L^{2}(\mathbb{T})}^{2}dt-\int_{u}^{l}\norm{p_{l-t}}_{L^{2}(\mathbb{T})}^{2}dt
        =\int_{l-u}^{r-u}\|p_s\|_{L^2(\T)}^2\,ds\nonumber
        \\
        &\leq \int_{l-u}^{r-u}\|p_s\|_{L^1(\T)}\|p_s\|_{L^\infty(\T)}\,ds
        \lesssim \int_{l-u}^{r-u} s^{-1/2}\,ds\nonumber
        \\
        &\lesssim (r-l)^{1-\epsilon}(l-u)^{-1/2+\eps}.
    \end{align}
     The latter bound together with the heat kernel estimate \eqref{eq:HK-2}  then yields
     \begin{align*}
         \norm{(P^{\R}_{Q(r-u)}f)-(P^{\R}_{Q(k_{M}(r)-u)}f)}_{L^{\infty}(\R)}&\lesssim (Q(r-u)-Q(k_{M}(r)-u))\\&\lesssim (k_{M}(r)-u)^{-1/2+\epsilon}(r-k_{M}(r))^{1-\epsilon}.
     \end{align*}
    Hence we obtain
    \begin{align}\label{eq:Q-bound}
    \MoveEqLeft
        \paren[\bigg]{\E\abs[\bigg]{\int_{t'}^{t}\Delta_j P_{R-r}^{N}[[(P^{\R}_{Q(r-u)}f)-(P^{\R}_{Q(k_{M}(r)-u)}f)](P_{k_{M}(r)-u}O_{u}+P^{N}_{k_{M}(r)}u_{0})](x)dr}^p}^{1/p}\nonumber
        \\&\lesssim 2^{-j\epsilon}\int_{t'}^{t}(R-r)^{-\epsilon/2}\norm{(P^{\R}_{Q(r-u)}f)-(P^{\R}_{Q(k_{M}(r)-u)}f)}_{L^{\infty}(\R)}dr\nonumber
        \\&\lesssim  2^{-j\epsilon}(R-t)^{-\epsilon/2}\int_{t'}^{t}(k_{M}(r)-u)^{-1/2+\epsilon}(r-k_{M}(r))^{1-\epsilon}dr\nonumber
        \\&\lesssim 2^{-j\epsilon}M^{-1+\epsilon} (R-t)^{-\epsilon/2}(t-s)^{1/2+\epsilon},
    \end{align}
    using again that $k_{M}(r)-u\geq (r-u)/2$ for $r\in [t',t]$. We therefore get \eqref{eq:claim-blow-up}.

    Therefore Lemma \ref{lem:ss} finishes the proof of \eqref{eq:f-actual-goal} provided we justify that
    \begin{align*}
        \mathcal{A}_t=\int_{0}^{t}\Delta_j P^{N}_{R-r}[f(\tilde O_r)-f(\tilde O_{k_{M}(r)})](x)dr.
    \end{align*}
    To this end, we need to verify \eqref{eq:ss1} and \eqref{eq:ss2}, the latter of which is trivial since $\E_s(\mathcal{A}_{st}-A_{st})=0$. The former is trivial for another reason: from the boundedness of $f$ one immediately gets $|\mathcal{A}_{st}-A_{st}|\leq 2(t-s)\|f\|_{L^\infty}$. The proof is finished.   
\end{proof}

In the proof of the following corollary, we use a variant of Kolmogorov's continuity theorem, that we state for completeness and prove in the appendix.
\begin{proposition}\label{prop:vKolmogorov}
Let $(X_t)_{t\in[0,T]}$ be a continuous stochastic process starting from $0$ with values in a Banach space $V$ and let $(S_t)_{t\geq 0}$ be strongly continuous semigroup. Then, if for some $p>0$, $\alpha>0$, $C'<\infty$ it holds for all $0\leq s\leq t\leq T$ that
\begin{align}
    \E\|X_t-S_{t-s}X_s\|^p\leq C'|t-s|^{1+\alpha},
\end{align}
then one has for all $\gamma\in(0,\alpha/p)$
\begin{align}\label{eq:Kolmogorov-conclusion}
    \E\big(\sup_{s< t\in[0,T]}|t-s|^{-\gamma }\|X_t-S_{t-s}X_s\|\big)^p\leq C''C',
\end{align}
where $C''$ depends only on $p,\gamma,\alpha,T$, and the semigroup $(S_t)_{t\geq0}$.
\end{proposition}

\begin{corollary}\label{cor:beforeGirs-temp}
Let \cref{asn:easier} hold.
     Let $p\geq 1$ and $\epsilon\in(0,1/4)$.
     Then 
  there exists a constant $C=C(T,p,\epsilon, K, \cM)$ such that  
    \begin{align}
    \MoveEqLeft
        \paren[\bigg]{\E\sup_{R\in[0,T]}\norm[\bigg]{\int_{0}^{R}P^{N}_{R-s}[f(O_{s}+P^{N}_{s}u_{0})-f(O_{k_{M}(s)}+P^{N}_{k_{M}(s)}u_{0})]ds}_{L^{\infty}}^p}^{1/p}\leq C M^{-1+\epsilon}.
    \end{align}
\end{corollary}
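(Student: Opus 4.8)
The plan is to deduce the corollary from \cref{prop:main-prop} by a Kolmogorov continuity argument in the ``target time'' variable $R$. Write $\tilde O_r:=O_r+P^N_r u_0$ and, for $R\in[0,T]$,
\[
\Psi_R:=\int_0^R P^N_{R-r}\big[f(\tilde O_r)-f(\tilde O_{k_M(r)})\big]\,dr ,
\]
so that we must bound $\big(\E\sup_{R\in[0,T]}\|\Psi_R\|_{L^\infty}^p\big)^{1/p}$; note $\Psi_0=0$. It suffices to treat $p$ large (the general case then follows since $\|Z\|_{L^p(\Omega)}\le\|Z\|_{L^q(\Omega)}$ for $p\le q$), and it suffices to prove the bound with $M^{-1+2\epsilon}$ in place of $M^{-1+\epsilon}$ (afterwards run the argument with $\epsilon/2$). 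Applying \cref{prop:main-prop} with $s=0$, $t=R$ gives the pointwise-in-$R$ estimate $\sup_R(\E\|\Psi_R\|_{L^\infty}^p)^{1/p}\lesssim M^{-1+2\epsilon}$. More importantly, the proof of \cref{prop:main-prop} establishes the block bound \eqref{eq:f-actual-goal} uniformly over $0\le s\le t\le R\le T$, and summing it over $j$ (and integrating in $x$) yields, for every $\theta\in(0,\epsilon)$,
\[
\Big(\E\Big\|\int_s^t P^N_{R-r}\big[f(\tilde O_r)-f(\tilde O_{k_M(r)})\big]\,dr\Big\|_{B^{\theta}_{p,p}}^p\Big)^{1/p}\lesssim_{\theta} M^{-1+2\epsilon}\,|t-s|^{1/4+\epsilon/2}.
\]
Fixing $\theta\in(0,\epsilon)$ and taking $p$ large enough that $\sigma:=\theta-1/p>0$, the Besov embedding $B^{\theta}_{p,p}\hookrightarrow\calC^{\sigma}$ upgrades this to the same bound in $\calC^{\sigma}$; in particular, with $s=0$, $t=R$, one gets $\sup_{R\in[0,T]}\big(\E\|\Psi_R\|_{\calC^{\sigma}}^p\big)^{1/p}\lesssim M^{-1+2\epsilon}$. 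The essential feature is that this is a bound in a space of \emph{positive} regularity that still carries the $M$-decay.

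Next, the H\"older estimate in $R$. For $0\le R'\le R\le T$ the semigroup property gives $P^N_{R-R'}\Psi_{R'}=\int_0^{R'}P^N_{R-r}[f(\tilde O_r)-f(\tilde O_{k_M(r)})]\,dr$, so
\[
\Psi_R-\Psi_{R'}=\int_{R'}^{R}P^N_{R-r}\big[f(\tilde O_r)-f(\tilde O_{k_M(r)})\big]\,dr+\big(P^N_{R-R'}-\Id\big)\Psi_{R'}.
\]
The first term is again of the form handled by \cref{prop:main-prop} (lower endpoint $R'$, upper endpoint $R$, target time $R$), so by the $\calC^{\sigma}$-version above its $L^p(\Omega;\calC^{\sigma})$-norm is $\lesssim M^{-1+2\epsilon}|R-R'|^{1/4+\epsilon/2}$. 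For the second term, observe that $\Psi_{R'}$ lies in the range of $\Pi_N$, hence $(P^N_{R-R'}-\Id)\Psi_{R'}=(P_{R-R'}-\Id)\Psi_{R'}$, and the heat kernel estimate \eqref{eq:HK-1} (with $\beta=\sigma$, $\tilde\theta=\sigma/2$) gives $\|(P_{R-R'}-\Id)\Psi_{R'}\|_{\calC^{\sigma/2}}\lesssim|R-R'|^{\sigma/4}\|\Psi_{R'}\|_{\calC^{\sigma}}$; taking $p$-th moments and using the regularity bound of the previous paragraph, the $L^p(\Omega;\calC^{\sigma/2})$-norm of the second term is $\lesssim M^{-1+2\epsilon}|R-R'|^{\sigma/4}$. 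Combining the two terms and using $\calC^{\sigma}\hookrightarrow\calC^{\sigma/2}$, for all $0\le R'\le R\le T$,
\[
\big(\E\|\Psi_R-\Psi_{R'}\|_{\calC^{\sigma/2}}^p\big)^{1/p}\lesssim M^{-1+2\epsilon}\,|R-R'|^{\sigma/4},
\]
with the implicit constant allowed to depend on $T$.

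Finally, $R\mapsto\Psi_R$ is continuous with values in $\calC^{\sigma/2}$ (e.g.\ because $f$ is bounded and $\Psi_R$ stays in the finite-dimensional space $\Pi_N L^2$, on which all norms are equivalent), and $\Psi_0=0$. Taking $p$ additionally so large that $p\sigma/4>1$, Kolmogorov's continuity theorem applied to $R\mapsto\Psi_R\in\calC^{\sigma/2}$ yields $\big(\E\sup_{R\in[0,T]}\|\Psi_R\|_{\calC^{\sigma/2}}^p\big)^{1/p}\lesssim M^{-1+2\epsilon}$, and since $\sigma/2>0$ the embedding $\calC^{\sigma/2}\hookrightarrow L^\infty$ finishes the proof. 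The one point requiring care — the rest being bookkeeping — is the term $(P^N_{R-R'}-\Id)\Psi_{R'}$: here one must use a regularity bound on $\Psi_{R'}$ that still exhibits the $M$-decay, namely the $M^{-1+2\epsilon}$-small $\calC^{\sigma}$-bound extracted from the proof of \cref{prop:main-prop}, rather than a merely uniform-in-$M$ regularity estimate, which would be of no use.
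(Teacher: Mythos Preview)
Your argument is correct, but it takes a different route from the paper. The paper invokes a semigroup version of Kolmogorov's criterion: for a continuous process $X$ with $X_0=0$ and a bounded semigroup $(S_t)$, a bound of the form $\E\|X_t-S_{t-s}X_s\|_V^p\le C'|t-s|^{1+\alpha}$ already implies $\E\sup_t\|X_t\|_V^p\lesssim C'$. Since $\Psi_R-P^N_{R-R'}\Psi_{R'}=\int_{R'}^{R}P^N_{R-r}[\ldots]\,dr$ is \emph{exactly} the object bounded in \cref{prop:main-prop} (with target time $R$), the paper applies this criterion directly in $V=L^\infty$, with $S=P^N$, $p\ge 4$, and $1+\alpha=p(1/4+\epsilon/2)$, using only the \emph{statement} of \cref{prop:main-prop}. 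Your approach instead applies ordinary Kolmogorov to the increments $\Psi_R-\Psi_{R'}$, which produces the extra term $(P^N_{R-R'}-\Id)\Psi_{R'}$; to control it with the correct $M$-decay you must first upgrade the bound on $\Psi_{R'}$ to a small positive Besov regularity, which forces you to dip into the proof of \cref{prop:main-prop} and extract the Littlewood--Paley block estimate \eqref{eq:f-actual-goal}. Both arguments are valid; the paper's is shorter and self-contained at the level of statements, while yours relies only on the standard Kolmogorov lemma at the cost of the additional regularity step.
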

\begin{proof}
We apply \cref{prop:vKolmogorov}. By \cref{prop:main-prop} (with $\eps/2$ in place of $\eps$ therein) the process
\begin{align}
    X_t=\int_{0}^{t}P^{N}_{t-s}[f(O_{s}+P^{N}_{s}u_{0})-f(O_{k_{M}(s)}+P^{N}_{k_{M}(s)}u_{0})]ds,
\end{align}
satisfies the conditions of \cref{prop:vKolmogorov} with the semigroup $S=P^N$, $V=L^\infty$, any $p\geq8$, $\alpha=1/16$, and $C'=(C M^{-1+\epsilon})^p$.
\end{proof}

\begin{corollary}\label{cor:main-temp}
    Let \cref{asn:easier} hold.
    Let $p\geq 1$ and $\epsilon\in(0,1/2)$.
    Let $\tilde U^N$ be the solution of \eqref{eq:tilde-UN}. Then 
  there exists a constant $C=C(T,p,\epsilon, K, \cM)$ such that  
    \begin{align}\label{eq:f-goal-2}
    \MoveEqLeft
        \paren[\bigg]{\E\sup_{R\in[0,T]}\norm[\bigg]{\int_{0}^{R}P^{N}_{R-s}[f(\tilde U^N_s)-f(\tilde U^N_{k_{M}(s)})]ds}_{L^{\infty}}^p}^{1/p}\leq C M^{-1+\epsilon}.
    \end{align}
\end{corollary}

\begin{proof}
    We follow the proof of Lemma 2.3.6 in \cite{BDG-SPDE}.
    Define the probability measure $\mathbb{Q}$ via
    \begin{align*}
        \frac{d\mathbb{Q}}{d\p}=\rho=\exp\paren[\bigg]{-\int_{0}^{T}\int f(\tilde{U}^N_s(y))\xi(dy,ds)-\frac{1}{2}\int_{0}^{T}\int \abs{f(\tilde{U}^N_s(y))}^2dyds}.
    \end{align*}
    Girsanov's theorem (\cite[Theorem 10.14]{DPZ}) gives that $\xi(dy,ds)+f(\tilde{U}^N_s(y))dyds$ defines a space-time white noise measure under $\mathbb{Q}$ independent of $\F_0$. This implies that the law of $(\tilde{U}^{N}_t)_{t\in[0,T]}$ under $\mathbb {Q}$ coincides with the law of  $(O_t+P_{t}^{N}u_{0})_{t\in[0,T]}$ under $\p$. An easy exercise shows that $\E[\rho^{-1}]\lesssim C(\norm{f}_{L^{\infty}(\R)})<\infty$.
    Therefore, defining a function $g$ on  the space of continuous functions $Z$ by 
    \begin{align}
       g(Z):= \sup_{R\in[0,T]}\norm[\bigg]{\int_{0}^{R}P^{N}_{R-s}[f(Z_s)-f(Z_{k_{M}(s)})]ds},
    \end{align}
    we can bound
    \begin{align*}
    \E[\abs{g(\tilde{U}^{N})}^{p}]=\E[\rho\rho^{-1}\abs{g(\tilde{U}^{N})}^{p}]&=\E_{\mathbb{Q}}[\rho^{-1}\abs{g(\tilde{U}^{N})}^{p}]
    \\&\leq \E_{\mathbb{Q}}[\rho^{-2}]^{1/2}\E_{\mathbb{Q}}[\abs{g(\tilde{U}^{N})}^{2p}]^{1/2}
    \\&= \E[\rho^{-1}]^{1/2}\E[\abs{g(O+P_{\cdot}u_{0}^{N})}^{2p}]^{1/2}
    \\&\lesssim \E[\abs{g(O+P_{\cdot}u_{0}^{N})}^{2p}]^{1/2}.
    \end{align*}
    The proof is finished by applying \cref{cor:beforeGirs-temp} with $2p$ in place of $p$ to bound the right-hand side.
\end{proof}

We can now prove \cref{lem:main}, which finalizes the proof of \cref{thm:main}.
\begin{proof}[Proof of \cref{lem:main}]
By \eqref{eq:error-decomp}, we can apply \cref{prop:Gronwall} with $X=\tilde U^N$, $Y=\tilde V^{M,N}$,
\begin{align}
    Z_t=\int_{0}^{t}P^{N}_{t-s}[f(\tilde U^N_s)-f(\tilde U^N_{k_{M}(s)})]ds,
\end{align}
$S(s,t)=P^N_{t-s}$, and $\tau(s)=k_M(s)$. Using \cref{cor:main-temp} to bound $Z$, we get the claim.
\end{proof}

\section{A priori bounds for superlinear $f$}\label{sec:apriori}
 
 In this section, we prepare for the error analysis in case of a superlinearily growing nonlinearity $f$ that satisfies \cref{ass:f}.
 One of the steps that become less obvious (and, for too naive approximations simply impossible \cite{beccari2019strong}), is to bound the approximations uniformly in $N,M$. The purpose of this section is to have such a priori estimates on $X^{N,M}$ as well as a couple of related processes.

Recall $\Phi_h$, $g_h$, and $X^{M,N}$ introduced in \cref{sec:formulation}. We now introduce some further auxiliary processes.
First note that one can also view $X^{M,N}$ as the standard Euler discretization for the SPDE with nonlinearity $g_h$, cf.~\eqref{eq:num-approx-cont}. 
The mild solution of the SPDE with nonlinearity $g_h$ is given by
\begin{align}\label{eq:auxSPDE-mild}
    X^h_t=P_tu_0+\int_0^tP_{t-s}g_h(X^h_s)ds+O_t.
\end{align}
Furthermore we denote by $X^{h,N}$ its Galerkin approximation, that is,
\begin{align}\label{eq:auxGal}
X^{h,N}_t=P^N_tu_0+\int_0^tP^N_{t-s}g_h(X^{h,N}_s)ds+O^N_t.
\end{align}
First, we derive bounds on the nonlinearities $g_h$, $\Phi_h$. Like $f$, also $g_{h}$ enjoys a local Lipschitz condition, a polynomial growth and a one-sided Lipschitz condition, but $\Phi_{h}$ is globally Lipschitz continuous. 
The proof of the following lemma can be found in the Appendix.
\begin{lemma}\label{lem:g_h-bounds}
Let $f$ satisfy \cref{ass:f} (a). Let $\Phi_h$, $g_h$ be given as in \eqref{eq:Phi}, \eqref{eq:gh}
Then there exist constants $C, \tilde K>0$ and $\tilde{m}\geq m$, depending only on $K$ and $m$, such that for $i=0,1,2,3$ and for all $h\in[0,1]$, the functions $\Phi_{h}$ and $g_{h}$ satisfy
\begin{align*}
    \abs{\Phi_{h}(x)-\Phi_{h}(y)}&\leq e^{Kh/2}\abs{x-y},\quad\forall x,y\in\R\\
    \abs{\partial^{i}g_{h}(x)}&\leq \tilde K(1+\abs{x}^{2\tilde{m}+1-i}),\quad\forall x\in\R,\quad i=0,1,2,3\\
    \partial g_{h}(x)&\leq K,\quad\forall x\in\R\\
    \abs{g_{h}(x)-g_{0}(x)}&\leq C h(1+\abs{x}^{4m+2}),\quad\forall x\in\R.
\end{align*}   
In particular, $g_{h}$ satisfies 
\begin{align}
    (g_{h}(x)-g_{h}(y))(x-y)&\leq K (x-y)^2,\quad\forall x,y\in\R\label{eq:g-bound1}\\
    \abs{g_{h}(x)-g_{h}(y)}&\leq \tilde K(1+\abs{x}^{2\tilde m}+\abs{y}^{2\tilde m})\abs{x-y},\quad\forall x,y\in\R.\label{eq:g-bound2}
\end{align}
\end{lemma}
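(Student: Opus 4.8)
The plan is to work from the representation $\Phi_h(x)=x+\int_0^h f(\Phi_s(x))\,ds$, equivalently $g_h(x)=h^{-1}\int_0^h f(\Phi_s(x))\,ds$, together with the variational equations for the linearised flow. \textbf{Step 1 (a priori bound on the flow).} First I would show that $\sup_{s\in[0,1]}\abs{\Phi_s(x)}\le C(1+\abs{x})$ with $C$ depending only on $K$. Indeed $zf(z)=z(f(z)-f(0))+zf(0)\le Kz^2+\abs{z}\abs{f(0)}\le\tfrac32Kz^2+\tfrac12K$ by \eqref{eq:f-ass2} and $\abs{f(0)}\le K$ from \eqref{eq:f-ass1}; differentiating $t\mapsto\Phi_t(x)^2$ and applying Gr\"onwall's inequality gives $\Phi_t(x)^2\le e^{3K}(1+x^2)$ for $t\le1$. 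Inserting this into \eqref{eq:f-ass1} already yields the $i=0$ bound, $\abs{g_h(x)}\le h^{-1}\int_0^h\abs{f(\Phi_s(x))}\,ds\le\tilde K(1+\abs{x}^{2m+1})$.

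\textbf{Step 2 (Lipschitz and one-sided Lipschitz).} In one dimension the flow preserves the order of initial data, so for $x\neq y$ the difference $r(s):=\Phi_s(x)-\Phi_s(y)$ never vanishes and solves $\dot r(s)=a(s)r(s)$ with $a(s)=\bigl(f(\Phi_s(x))-f(\Phi_s(y))\bigr)/\bigl(\Phi_s(x)-\Phi_s(y)\bigr)\le K$ by the one-sided Lipschitz property. Hence $\Phi_h(x)-\Phi_h(y)=(x-y)\exp\bigl(\int_0^h a(s)\,ds\bigr)$, which gives the Lipschitz estimate for $\Phi_h$, and dividing by $h$,
\[
\bigl(g_h(x)-g_h(y)\bigr)(x-y)=\frac{(x-y)^2}{h}\Bigl(e^{\int_0^h a(s)\,ds}-1\Bigr)\le\frac{e^{Kh}-1}{h}\,(x-y)^2,
\]
so $(g_h(x)-g_h(y))(x-y)$ is controlled by a constant multiple of $(x-y)^2$ uniformly in $h\in[0,1]$, which is \eqref{eq:g-bound1}; estimate \eqref{eq:g-bound2} then follows from Step 3 via the mean value theorem.

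\textbf{Step 3 (higher derivatives and the remainder).} Differentiating $\partial_s\Phi_s(x)=f(\Phi_s(x))$ in $x$ (allowed by smoothness of $f$), the derivatives $\Phi^{(i)}_s:=\partial_x^i\Phi_s(x)$, $i=1,2,3$, satisfy the variational equations: linear ODEs with homogeneous propagator $\exp\bigl(\int_r^s f'(\Phi_v(x))\,dv\bigr)\in(0,e^{K}]$ (again by \eqref{eq:f-ass2}) and with inhomogeneities that, by Fa\`a di Bruno, are polynomials in $f^{(k)}(\Phi_v(x))$, $k\le i$, and the lower order $\Phi^{(j)}_v$, $j<i$. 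Solving by variation of constants and inducting on $i$, using Step 1 and the polynomial growth \eqref{eq:f-ass1} of $f',f'',f'''$, one obtains $\sup_{s\in[0,1]}\abs{\Phi^{(i)}_s(x)}\lesssim1+\abs{x}^{p_i}$ for suitable powers $p_i$. Since $\partial_x^ig_h(x)=h^{-1}\int_0^h\partial_x^i[f(\Phi_s(x))]\,ds$ and $\partial_x^i[f(\Phi_s(x))]$ is again a Fa\`a di Bruno polynomial in $f^{(k)}(\Phi_s(x))$ and the $\Phi^{(j)}_s$, collecting exponents and choosing $\tilde m\ge m$ large enough (depending only on $m$) gives $\abs{\partial^ig_h(x)}\le\tilde K(1+\abs{x}^{2\tilde m+1-i})$; the averaging $h^{-1}\int_0^h(\cdot)\,ds$ of an $h$-uniformly bounded integrand contributes no $h$-dependence. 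For the last bound, write $g_h(x)-f(x)=h^{-1}\int_0^h(f(\Phi_s(x))-f(x))\,ds$ and $f(\Phi_s(x))-f(x)=\int_0^sf'(\Phi_r(x))f(\Phi_r(x))\,dr$; bounding $\abs{f'f}(\Phi_r(x))$ by Step 1 and \eqref{eq:f-ass1} gives $\abs{f(\Phi_s(x))-f(x)}\lesssim s(1+\abs{x}^{4m+1})$, and $h^{-1}\int_0^h s\,ds=h/2$ yields $\abs{g_h(x)-g_0(x)}\le Ch(1+\abs{x}^{4m+2})$.

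\textbf{Main obstacle.} There is no single deep step; the real work is the bookkeeping of polynomial exponents through the variational equations and the Fa\`a di Bruno expansions at $i=2,3$, and checking that all constants are uniform in $h\in[0,1]$ — the only potential danger being the factor $h^{-1}$ in the definition of $g_h$, which is harmless because it always pairs with an integral $\int_0^h$.
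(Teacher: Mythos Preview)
Your proposal is correct and follows essentially the same route as the paper: integral representation $g_h(x)=h^{-1}\int_0^h f(\Phi_s(x))\,ds$, linear growth of the flow via Gr\"onwall, Lipschitz and one-sided Lipschitz from $f'\le K$, inductive control of $\partial_x^i\Phi_s$ through the variational equations, and the $g_h-f$ bound from $\Phi_s(x)-x=\int_0^s f(\Phi_r)\,dr$. The only differences are cosmetic --- you invoke order preservation and the explicit formula $\Phi_h(x)-\Phi_h(y)=(x-y)\exp\int_0^h a$ where the paper squares and applies Gr\"onwall, and you organise the higher-derivative step via variation of constants and Fa\`a di Bruno where the paper writes out each ODE by hand --- and, like the paper's own argument, your bounds deliver constants depending only on $K$ rather than the exact $e^{Kh/2}$ and $K$ appearing in the statement, which is all that is used downstream.
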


Given the uniform bounds on $g_h$, we obtain uniform bounds on the processes $X^h$, $X^{h,N}$, formulated as follows.

\begin{corollary}\label{cor:cont-sol-bounds}
 Let \cref{ass:f} hold.
 Then there exist unique mild solutions
    $X^h$ and $X^{h,N}$
    to the equations \eqref{eq:auxSPDE-mild} and \eqref{eq:auxGal}, respectively.
Moreover, for any $p\geq 1$, $\epsilon\in (0,1/2)$, $\lambda\in (0,1)$, there exists a constant $C=C(T, m,K,\epsilon,\lambda,p,\cM)$ such that the following bound holds 
    \begin{align*}
    \MoveEqLeft
        \sup_{M\in\N}\E\norm{X^{h}}_{C_{T}^{\lambda/2}\calC^{1/2-\lambda-\epsilon}}^p
          +\sup_{M,N\in\N}\E\norm{X^{h,N}}_{C_{T}^{\lambda/2}\calC^{1/2-\lambda-\epsilon}}^p
        \leq C.
    \end{align*} 
\end{corollary}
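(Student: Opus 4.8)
The plan is to handle $X^h$ and $X^{h,N}$ separately. For $X^h$: by \cref{lem:g_h-bounds} the map $g_h$ is of class $C^3$ and satisfies the bounds \eqref{eq:f-ass1}, \eqref{eq:f-ass2} (with $K,m$ replaced by $\tilde K,\tilde m$) uniformly in $h\in[0,1]$; these are exactly the hypotheses used in the proof of \cref{prop:wp}, so $X^h$ is well-posed and obeys \eqref{eq:reg-b} (with $m$ replaced by $\tilde m$) with a constant depending only on $T,p,\lambda,\epsilon,\tilde m,\tilde K$, hence independent of $M$. Together with the embedding $\calC^2\hookrightarrow\calC^{1/2}$ and $\E\|u_0\|_{\calC^2}^q\leq\cM(q)$ (\cref{ass:f}(b)), this gives the asserted bound for $X^h$.

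For $X^{h,N}$: set $\tilde O^N_t:=O^N_t+P^N_tu_0$ and $v^N:=X^{h,N}-\tilde O^N$, so that $\partial_tv^N=\Delta_Nv^N+\Pi_Ng_h(v^N+\tilde O^N)$, $v^N_0=0$. On the finite-dimensional space $\Pi_NL^2(\T)$ this is an SDE with smooth coefficients, hence has a unique local mild solution; global existence and moment bounds I would get from the $L^2$ energy estimate. Testing with $v^N\in\Pi_NL^2$ (so that $\Delta_Nv^N=\Delta v^N$ and $\langle\Pi_N\phi,v^N\rangle=\langle\phi,v^N\rangle$), bounding $\langle g_h(v^N+\tilde O^N)-g_h(\tilde O^N),v^N\rangle$ by the pointwise one-sided Lipschitz bound \eqref{eq:g-bound1} and $\langle g_h(\tilde O^N),v^N\rangle$ by the polynomial growth of $g_h$, Grönwall's lemma gives
\begin{align*}
 \sup_{M,N\in\N}\E\Big(\sup_{t\in[0,T]}\|v^N_t\|_{L^2}^2+\int_0^T\|\nabla v^N_s\|_{L^2}^2\,ds\Big)^p<\infty\qquad\text{for all }p,
\end{align*}
once one inserts the uniform-in-$N$ Gaussian moment bounds for $\|O^N_t\|_{L^q(\T)}$ (obtained as in \cref{prop:ou} by restricting the Fourier sums to $|k|\le N$) and the bound $\|P^N_tu_0\|_{L^q}\le\|\Pi_Nu_0\|_{L^q}\lesssim\|u_0\|_{\calC^2}$. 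In particular the solution does not explode.

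The crux is upgrading this $L^2$-bound to the $C_T^{\lambda/2}\calC^{1/2-\lambda-\epsilon}$-bound. I would do this through the mild formula $v^N_t=\int_0^tP^N_{t-s}g_h(v^N_s+\tilde O^N_s)\,ds$, using that $P^N$ obeys the heat-kernel estimates \eqref{eq:HK-1} on band-limited functions with the same constants as $P$ and that $\|\Pi_N\|_{L^q\to L^q}\lesssim1$ for $1<q<\infty$ uniformly in $N$ (Riesz). From $v^N\in L^\infty_tL^2_x\cap L^2_tH^1_x\hookrightarrow L^2_tL^\infty_x$ (one-dimensional Gagliardo--Nirenberg) and the polynomial growth of $g_h$, the forcing lies in a mixed Lebesgue space; feeding it into the mild formula improves the space--time integrability of $v^N$, and iterating finitely many times (the number of steps depending on $\tilde m$, which is where subcriticality of polynomial reaction terms in one dimension enters) one reaches $v^N\in C_TL^\infty$. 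The temporal H\"older bound then follows exactly as in \cref{prop:wp} by applying \eqref{eq:HK-1} to $v^N_t-v^N_s$. Adding back $\tilde O^N$, which is uniformly bounded in $C_T^{\lambda/2}\calC^{1/2-\lambda-\epsilon}$ by \eqref{eq:ou-reg} (with $O^N$ in place of $O$) and the heat-kernel bound for $P^N_\cdot u_0$, gives the claim. The main obstacle is precisely this bootstrap: at the Galerkin level a higher-order ($L^{2p}$, $p>1$) energy estimate is not available in the usual way, since $\Pi_N$ does not commute with the nonlinear test function $|v^N|^{2p-2}v^N$ and the pointwise dissipativity of $g_h$ is thereby lost, which is what forces the detour through the mild formula and makes the bookkeeping of integrability exponents for high-degree nonlinearities delicate. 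A possible simplification is to compare $X^{h,N}$ with $\Pi_NX^h$: the difference $Z^N$ solves $\partial_tZ^N=\Delta_NZ^N+\Pi_N(g_h(X^{h,N})-g_h(X^h))$, $Z^N_0=0$, and its $L^2$ energy estimate — using the pointwise one-sided Lipschitz bound for $g_h(X^{h,N})-g_h(\Pi_NX^h)$, the local Lipschitz bound \eqref{eq:g-bound2}, the bound on $X^h$ from above, and $\|(\Id-\Pi_N)X^h_t\|_{L^2}\lesssim N^{-1/2+\lambda+\epsilon}\|X^h_t\|_{\calC^{1/2-\lambda-\epsilon}}$ — shows $\sup_{M,N}\E\sup_t\|Z^N_t\|_{L^2}^p\to0$, so the bootstrap need only be run for the small perturbation $Z^N$.
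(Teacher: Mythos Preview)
Your treatment of $X^h$ is correct and matches the paper: by \cref{lem:g_h-bounds} the map $g_h$ satisfies \cref{ass:f} with constants uniform in $h$, so \cref{prop:wp} applies directly and gives the bound.

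For $X^{h,N}$ you take a much longer route than the paper. The paper's argument is one sentence: it observes that the proof of \cref{prop:wp} carries over verbatim once one checks that Cerrai's structural hypothesis (\cite[Hypothesis~6.1]{Cerrai}) is also satisfied by the operators $A=\Pi_N\Delta$, $Q=\Pi_N$ on the same spaces $H=L^2(\T)$, $E=C(\T)$. Cerrai's $C_TC(\T)$ a priori bound \eqref{eq:cerrai-bound} therefore holds for $X^{h,N}$ with constants uniform in $N$, and the upgrade to $C_T^{\lambda/2}\calC^{1/2-\lambda-\epsilon}$ follows from the semigroup estimates for $P^N$ exactly as in the derivation of \eqref{eq:reg-b}. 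No energy estimate and no bootstrap are needed.

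Your approach --- an $L^2$ energy estimate followed by a bootstrap through the mild formula --- is in principle workable in one dimension, where polynomial nonlinearities are subcritical, but the bootstrap is the crux and you do not carry it out; you explicitly flag it as the main obstacle. The alternative you propose, comparing with $\Pi_NX^h$, does not circumvent the difficulty: it yields only $L^2$ smallness of $Z^N$, and passing from there to $C_TL^\infty$ or $\calC^{1/2-\lambda-\epsilon}$ still requires the same bootstrap, since the forcing $g_h(X^{h,N})-g_h(X^h)$ in the $Z^N$-equation involves the full $X^{h,N}$ and cannot be controlled in any norm stronger than $L^2$ without a priori control of $X^{h,N}$ in that norm. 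So as written the argument for $X^{h,N}$ has a gap at precisely the step you identify as delicate. The idea you are missing is that the abstract well-posedness machinery already covers the Galerkin problem directly, uniformly in $N$.
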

\begin{proof}
First note that the bound for $X^h$ follows directly from \cref{prop:wp}, since by \cref{lem:g_h-bounds}, $g_h$ satisfies \cref{ass:f} with constants uniform in $h$. The proof for $X^{h,N}$ is almost identical: one only needs to note that \cite[Hypothesis 6.1]{Cerrai} is satisfied also by the operators $A=\Pi_N\Delta$, $Q=\Pi_N$, with the spaces $H=L^{2}(\mathbb{T})$, $E=C(\mathbb{T})$. The rest of the proof of \cref{prop:wp} follows verbatim.
\end{proof}

The a priori bounds for the splitting scheme can be derived using that  $\Phi_{h}$ is globally Lipschitz and the uniform growth bound on $g_{h}$.
\begin{proposition}\label{prop:apriori}
    Let \cref{ass:f} hold.
    Let $p\geq 1$
    and $X^{M,N}$ 
    be as in \eqref{eq:num-approx-cont}. 
    Then there exists a constant $C=C(p, K,m,T,\cM)$ such that the following a priori bound holds
    \begin{align*}
    \MoveEqLeft
       \sup_{M,N\in\N} \E\sup_{k=0,\dots,M}\norm{X^{M,N}_{t_{k}}}_{L^{\infty}}^p
       \leq C.
    \end{align*}
\end{proposition}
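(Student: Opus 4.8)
The plan is to reduce everything to a bound on $\hat X^{M,N}:=X^{M,N}-O^{N}$ and to exploit the two structural features supplied by \cref{lem:g_h-bounds}: the ODE flow $\Phi_{h}$ is \emph{globally} Lipschitz (uniformly in $h\in[0,1]$), while $g_{h}$ still obeys a one-sided Lipschitz and a polynomial growth bound, also uniformly in $h$. Since $O^{N}$ is bounded in $C_{T}\calC^{1/2-\eps}(\T)\hookrightarrow C_{T}L^{\infty}(\T)$ uniformly in $N$ (this follows from \cref{prop:ou} together with \eqref{eq:ou-bound}, neither of which involves $f$), and $\calC^{2}\hookrightarrow L^{\infty}$ controls $u_{0}$, it suffices to bound $\sup_{t\in[0,T]}\|\hat X^{M,N}_{t}\|_{L^{\infty}(\T)}$ in $L^{p}(\Omega)$ uniformly in $M,N$, where $\hat X^{M,N}$ denotes the continuous-time interpolation obtained by subtracting $O^{N}$ from \eqref{eq:num-approx-cont}.

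I would first establish the $L^{2}$ bound on the grid. From \eqref{eq:splitting} one has $\hat X^{M,N}_{t_{k+1}}=P^{N}_{h}\big(\Phi_{h}(X^{M,N}_{t_{k}})-O^{N}_{t_{k}}\big)$; using that $P^{N}_{h}$ is an $L^{2}$-contraction and splitting $\Phi_{h}(X^{M,N}_{t_{k}})-O^{N}_{t_{k}}=\big(\Phi_{h}(\hat X^{M,N}_{t_{k}}+O^{N}_{t_{k}})-\Phi_{h}(O^{N}_{t_{k}})\big)+\big(\Phi_{h}(O^{N}_{t_{k}})-O^{N}_{t_{k}}\big)$, the global Lipschitz bound on $\Phi_{h}$ applied to the first bracket and the identity $\Phi_{h}(z)-z=\int_{0}^{h}f(\Phi_{s}(z))\,ds$ (with the polynomial growth of $f$ and $|\Phi_{s}(z)|\lesssim|z|+1$) applied to the second give
\begin{equation*}
\|\hat X^{M,N}_{t_{k+1}}\|_{L^{2}}\leq e^{Kh/2}\|\hat X^{M,N}_{t_{k}}\|_{L^{2}}+Ch\big(1+\|O^{N}_{t_{k}}\|_{L^{\infty}}^{2m+1}\big).
\end{equation*}
A discrete Gr\"onwall argument plus the uniform moment bounds on $O^{N}$ and $u_{0}$ then yield $\sup_{M,N}\E\sup_{k}\|\hat X^{M,N}_{t_{k}}\|_{L^{2}}^{p}\leq C$. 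The decisive point is that the superlinear growth of $f$ has been transferred entirely onto $O^{N}$, which is bounded; this is exactly what the splitting scheme buys over the plain (accelerated) exponential Euler scheme, for which such a bound is known to fail.

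To upgrade to $L^{\infty}$ I would pass to the continuous interpolation. Differentiating \eqref{eq:num-approx-cont} (the stochastic part cancelling against $O^{N}$), $\hat X^{M,N}$ solves the random, noise-free evolution equation on the finite-dimensional space $V_{N}:=\Pi_{N}L^{2}(\T)$,
\begin{equation*}
\partial_{t}\hat X^{M,N}_{t}=\Delta_{N}\hat X^{M,N}_{t}+P^{N}_{t-k_{M}(t)}\,g_{h}\big(X^{M,N}_{k_{M}(t)}\big),
\end{equation*}
to which one applies an $L^{2p}$ energy estimate: as $\hat X^{M,N}_{t}\in V_{N}$, the diffusion term contributes $-(2p-1)\int|\hat X^{M,N}_{t}|^{2p-2}|\nabla\hat X^{M,N}_{t}|^{2}\leq0$, while the nonlinear term is controlled via the one-sided Lipschitz and polynomial growth bounds for $g_{h}$ from \cref{lem:g_h-bounds} together with the uniform $L^{\infty}$-bound on $O^{N}$. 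Choosing the constants uniformly in $p$ and letting $p\to\infty$ produces $\sup_{t}\|\hat X^{M,N}_{t}\|_{L^{\infty}}\lesssim\|u_{0}\|_{L^{\infty}}+1+\|O^{N}\|_{C_{T}L^{\infty}}^{2\tilde m+1}$, and taking $p$-th moments finishes the proof; an alternative is to estimate instead $X^{M,N}-X^{h,N}$ and invoke \cref{cor:cont-sol-bounds}.

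The hard part is that the spatial projection $\Pi_{N}$ hidden in $P^{N}_{h}$ is \emph{not} a contraction on $L^{q}(\T)$ for $q\neq2$, so one cannot simply iterate an $L^{q}$-recursion for $q>2$, since the per-step constant would compound to a power $M$. This is precisely why the $L^{\infty}$-bound must come from the continuous-time formulation, where $\Pi_{N}$ enters only once per energy estimate: the projection error $\langle g_{h}(\cdot),(\Pi_{N}-\Id)(|\hat X^{M,N}_{t}|^{2p-2}\hat X^{M,N}_{t})\rangle$ is absorbed into the dissipative term $-(2p-1)\int|\hat X^{M,N}_{t}|^{2p-2}|\nabla\hat X^{M,N}_{t}|^{2}$ via a one-dimensional Gagliardo--Nirenberg interpolation, as in the a priori analysis of spectral Galerkin schemes for reaction--diffusion equations (cf.~\cite{Cerrai}, and for the splitting scheme \cite{BrehierJH18, BrehierG19}). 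A secondary technical point is that $g_{h}$ is evaluated at the delayed argument $X^{M,N}_{k_{M}(t)}$ rather than at $X^{M,N}_{t}$, which is dealt with by a short continuity estimate across a single timestep together with the a priori $L^{2}$-control already obtained.
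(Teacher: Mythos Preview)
Your grid recursion is precisely the argument the paper runs, but the paper runs it \emph{directly in $L^{\infty}$} and never passes through $L^{2}$ at all: writing $R^{M,N}_{t_{k+1}}=P_{h}^{N}\bigl(\Phi_{h}(R^{M,N}_{t_{k}}+O^{N}_{t_{k}})-\Phi_{h}(O^{N}_{t_{k}})\bigr)+P_{h}^{N}\bigl(\Phi_{h}(O^{N}_{t_{k}})-O^{N}_{t_{k}}\bigr)$, it bounds the first bracket by $e^{Kh/2}\|R^{M,N}_{t_{k}}\|_{L^{\infty}}$ via the global Lipschitz property of $\Phi_{h}$ and the second by $h\tilde K(1+\|O^{N}\|_{C_{T}L^{\infty}}^{2\tilde m+1})$ via $\Phi_{h}(z)-z=hg_{h}(z)$ and the growth of $g_{h}$, then iterates. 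This yields the $L^{\infty}$ bound in one step and completely avoids your $L^{2}\to L^{2p}\to L^{\infty}$ upgrade. The paper invokes ``$P_{h}$ is bounded on $L^{\infty}$ with norm $1$'' for this (following \cite{BrehierG19}); your observation that $\Pi_{N}$, and hence $P_{h}^{N}$ on functions outside $V_{N}$, is not an $L^{\infty}$-contraction is on point, and the paper does not address it.

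That said, your proposed upgrade has a genuine gap of its own. In the $L^{2p}$ energy identity the forcing is $P^{N}_{t-k_{M}(t)}g_{h}(X^{M,N}_{k_{M}(t)})$, not $g_{h}(X^{M,N}_{t})$; to bring the one-sided Lipschitz bound into play you must control the discrepancy, and that discrepancy carries the local-Lipschitz prefactor $1+|X^{M,N}|^{2\tilde m}$. Your only a~priori input at this stage is the $L^{2}$ bound on the grid, which does not dominate this prefactor in any $L^{q}$ with $q$ large, so the argument is circular: you need $L^{\infty}$ control on $X^{M,N}$ to estimate the very term that is supposed to produce it. The same circularity afflicts your alternative of comparing to $X^{h,N}$ via \cref{cor:cont-sol-bounds}. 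Separately, the absorption of the projection error $\langle(\Pi_{N}-\Id)(|\hat X|^{2p-2}\hat X),\,\cdot\,\rangle$ into the dissipation must be carried out with constants uniform in $p$ (since you send $p\to\infty$), and the Gagliardo--Nirenberg constants typically degenerate; this is asserted but not shown.
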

\begin{proof}
We follow the proof of \cite[Proposition 3.7]{BrehierG19}.
Let $R^{M,N}_{t_{l}}=X^{M,N}_{t_{l}}-O^{N}_{t_{l}}-P^{N}_{t_{l}}u_0$, $l=0,\dots,M$. Using that $P_h$ is bounded on $L^{\infty}$ with norm $1$, that $\Phi_{h}$ is globally Lipschitz with Lipschitz constant $e^{Kh/2}$, $\Phi_{h}(z)-z=h g_{h}(z)$, and the growth bound for $g_h$, we can write iteratively
\begin{align*}
    \norm{R^{M,N}_{t_{k+1}}}_{L^{\infty}}&=\norm{P_{h}^{N}(\Phi_{h}(R^{M,N}_{t_{k}}+O^{N}_{t_{k}})-\Phi_{h}(O^{N}_{t_{k}})) + P_{h}^{N}(\Phi_{h}(O^{N}_{t_{k}})-O_{t_{k}}^{N})}_{L^{\infty}}
    \\&\leq e^{Kh/2} \norm{R^{M,N}_{t_{k}}}_{L^{\infty}}+h \tilde{K}^p(1+\norm{O^{N}}_{C_{T}L^{\infty}}^{(2\tilde{m}+1)})
    \\
    &\vdots
    \\&\leq e^{(k+1)Kh/2}\norm{\Pi_{N}u_{0}}_{L^{\infty}} + \tilde{K}\sum_{j=0}^{k} e^{jKh/2}h(1+\norm{O}_{C_{T}L^{\infty}}^{(2\tilde{m}+1)}).
\end{align*}
In the last inequality we also estimated $\norm{O^{N}}_{C_{T}L^{\infty}}^p=\norm{\Pi_{N}O}_{C_{T}L^{\infty}}^p\leq \norm{O}_{C_{T}L^{\infty}}^p$.
Note that $(k+1)h\leq Mh=T$.
Therefore,
\begin{align*}
   \sup_{k=0,\dots,M} \norm{R^{M,N}_{t_{k}}}_{L^{\infty}}&\leq e^{KT/2}(\norm{u_{0}}_{L^{\infty}}+M h \tilde{K} (1+\norm{O}_{C_{T}L^{\infty}}^{(2\tilde{m}+1)}))\\&\leq C(
   1+\norm{u_{0}}_{L^{\infty}}+\norm{O}_{C_{T}L^{\infty}}^{(2\tilde{m}+1)}).
\end{align*} 
Taking $p$-th moment expectations and using the bounds on $O$ and $O^N$, we obtain the claim.
\end{proof}
\begin{corollary}\label{cor:apriori}
Let \cref{ass:f} hold.
Let $X^{M,N}$
be as in \eqref{eq:num-approx-cont}.
Then for any $p\geq 1$, $\lambda\in (0,1)$, $\epsilon\in(0,1/2)$,  there exists a constant $C=C(p,T,\lambda,\epsilon, K, m,\cM)$ such that
    \begin{align}
    \MoveEqLeft\label{eq:aprioribound100}
        \sup_{M,N\in\N}\E\norm{X^{M,N}}_{C_{T}^{\lambda/2}\calC^{1/2-\lambda-\epsilon}}^p
        \leq C.
    \end{align} 
    Let $R^{M,N}_t:=X^{M,N}_t-O^N_t-P^{N}_tu_0$, $t\in[0,T]$. Then  for any $\alpha\in (0,2)$, $\epsilon \in (0,1-\alpha/2)$, there exists a constant $C=C(p,T,\epsilon,\alpha, K, m,\cM)$ such that,
    \begin{align*}
        \sup_{M,N\in\N}\E\norm{R^{M,N}}_{C_{T}^{1-\alpha/2-\epsilon}\calC^{\alpha}}^p 
        \leq C.
    \end{align*}
\end{corollary}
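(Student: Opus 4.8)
The plan is to reduce both assertions to the auxiliary process $R^{M,N}=X^{M,N}-O^N$. The truncated stochastic convolution $O^N=\Pi_N O$ satisfies the bounds of \cref{prop:ou} with $O$ replaced by $O^N$, uniformly in $N$ (the Fourier-series computation in that proof is unchanged once the sums over $k$ are restricted to $|k|\le N$), and likewise the heat-kernel estimates \eqref{eq:HK-1} continue to hold for the truncated semigroup $P^N=\Pi_N P=P\Pi_N$ uniformly in $N$ (the sharp cutoff only affects the two Littlewood--Paley blocks of frequency $\sim N$, where the factor $e^{-cN^2t}$ more than compensates for the logarithmic loss of $\Pi_N$). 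Granting this, once the $R^{M,N}$-bound is established, \eqref{eq:aprioribound100} follows by writing $X^{M,N}=R^{M,N}+O^N$, picking $\alpha\in(0,2)$ with $\calC^\alpha\hookrightarrow\calC^{1/2-\lambda-\epsilon}$ and a small $\epsilon'\in(0,1-\alpha/2)$ with $1-\alpha/2-\epsilon'\ge\lambda/2$ (which is possible), and invoking the Besov and $C_T^\gamma$ embeddings together with the $O^N$-analogue of \cref{prop:ou}. So the work is in the estimate for $R^{M,N}$.

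\textbf{Reducing to a pathwise bound.} Recall $R^{M,N}_t=P^N_tu_0+D^{M,N}_t$ with $D^{M,N}_t:=\int_0^tP^N_{t-k_M(s)}g_h(X^{M,N}_{k_M(s)})\,ds$. For the first term, $P^N_tu_0-P^N_su_0=P^N_s(\Id-P^N_{t-s})u_0$, and the two bounds of \eqref{eq:HK-1} (with $\beta=2$, $\tilde\theta=2-\alpha$, and with $\theta=0$) give $\|P^N_\cdot u_0\|_{C_T^{1-\alpha/2}\calC^\alpha}\lesssim\|u_0\|_{\calC^2}$, whose $p$-th moment is controlled by \cref{ass:f}(b). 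For $D^{M,N}$, the quantitative input is that \cref{prop:apriori} together with the polynomial growth of $g_h$ from \cref{lem:g_h-bounds} yields, uniformly in $M,N$,
\[
\E\sup_{s\in[0,T]}\big\|g_h(X^{M,N}_{k_M(s)})\big\|_{L^\infty}^p\lesssim 1+\E\sup_{k=0,\dots,M}\|X^{M,N}_{t_k}\|_{L^\infty}^{(2\tilde m+1)p}\lesssim 1,
\]
so it suffices to prove the \emph{pathwise} bound $\|D^{M,N}\|_{C_T^{1-\alpha/2-\epsilon}\calC^\alpha}\lesssim\sup_{s\in[0,T]}\|g_h(X^{M,N}_{k_M(s)})\|_{L^\infty}=:G^{M,N}$.

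\textbf{The core estimate.} For this I would write, for $0\le s<t\le T$,
\[
D^{M,N}_t-D^{M,N}_s=\int_s^tP^N_{t-k_M(r)}g_h(X^{M,N}_{k_M(r)})\,dr-\int_0^sP^N_{s-k_M(r)}(\Id-P^N_{t-s})g_h(X^{M,N}_{k_M(r)})\,dr,
\]
using the semigroup identity $P^N_{t-k_M(r)}-P^N_{s-k_M(r)}=-P^N_{s-k_M(r)}(\Id-P^N_{t-s})$. The one mildly delicate observation is that discretisation only moves the time-to-go away from $0$: since $k_M(r)\le r$, one has $t-k_M(r)\ge t-r$ on $(s,t)$ and $s-k_M(r)\ge s-r>0$ on $(0,s)$, so no new time-singularity appears. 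Applying the first bound of \eqref{eq:HK-1} to the first integral ($\calC^0\to\calC^\alpha$, cost $(t-k_M(r))^{-\alpha/2}\le(t-r)^{-\alpha/2}$, integrable since $\alpha<2$) bounds it by $C(t-s)^{1-\alpha/2}G^{M,N}$; applying the second bound ($\calC^0\to\calC^{-\tilde\theta}$, cost $(t-s)^{\tilde\theta/2}$) and then the first ($\calC^{-\tilde\theta}\to\calC^\alpha$, cost $(s-k_M(r))^{-(\alpha+\tilde\theta)/2}\le(s-r)^{-(\alpha+\tilde\theta)/2}$) bounds the second integral by $C(t-s)^{\tilde\theta/2}G^{M,N}$, provided $\alpha+\tilde\theta<2$. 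Choosing $\tilde\theta=2-\alpha-2\epsilon$ --- admissible precisely because of the standing hypothesis $\epsilon<1-\alpha/2$ --- produces the exponent $\tilde\theta/2=1-\alpha/2-\epsilon$ with all integrals convergent; the uniform-in-time bound $\sup_t\|D^{M,N}_t\|_{\calC^\alpha}\lesssim G^{M,N}$ is obtained identically. Taking $p$-th moments and inserting the displayed a priori bound finishes the proof.

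\textbf{Main difficulty.} There is no genuine obstacle; what requires care is (i) the two uniformity-in-$N$ facts flagged above (the heat-kernel estimates for $P^N$ and the $O^N$-version of \cref{prop:ou}), so that all constants are truly independent of $N$, and (ii) checking that the discretised kernel $P^N_{t-k_M(r)}$ creates no extra time-singularity, which it does not, exactly because $k_M(r)\le r$. Given these, the argument is a routine, if somewhat lengthy, chain of semigroup estimates, and the essential structural input --- the grid-point a priori $L^\infty$-bound of \cref{prop:apriori} and the growth of $g_h$ --- is already in place.
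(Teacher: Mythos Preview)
Your proposal is correct and follows essentially the same approach as the paper: the core estimate for $D^{M,N}_t-D^{M,N}_s$ via the decomposition into $\int_s^t$ and $\int_0^s$, the observation $t-k_M(r)\ge t-r$ and $s-k_M(r)\ge s-r$, and the choice $\tilde\theta=2-\alpha-2\epsilon$ are exactly the paper's argument. The only cosmetic difference is that you deduce \eqref{eq:aprioribound100} from the $R^{M,N}$-bound via $X^{M,N}=R^{M,N}+O^N$, whereas the paper derives it directly by rerunning the same semigroup computation at the lower regularity $\calC^{1/2-\lambda-\epsilon}$ (mimicking the passage from \eqref{eq:cerrai-bound} to \eqref{eq:reg-b} in \cref{prop:wp}); both routes are equivalent.
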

\begin{proof}
    The bound \eqref{eq:aprioribound100} follows from \cref{prop:apriori} in exactly the same  way as in \cref{prop:wp} the bound \eqref{eq:reg-b} is derived from \eqref{eq:cerrai-bound}.
    To see the bound on $R^{M,N}$ one can write
    \begin{align*}
        R^{M,N}_{t}-R^{M,N}_{s}&=\int_{s}^{t}P^{N}_{t-k_{M}(r)}g_{h}(X^{M,N}_{k_{M}(r)})dr 
        \\&\qquad + \int_{0}^{s}P_{s-k_{M}(r)}^N(P_{t-s}^N-\operatorname{Id})g_{h}(X^{M,N}_{k_{M}(r)})dr.
    \end{align*}
    We estimate each term by the semigroup estimates \eqref{eq:HK-1} as follows.
First, using $t-k_{M}(r)\geq t-r$, we have
     \begin{align*}
        \norm[\bigg]{\int_{s}^{t}P^{N}_{t-k_{M}(r)}g_{h}(X^{M,N}_{k_{M}(r)})dr}_{\calC^{\alpha}}&\lesssim \int_{s}^{t}(t-k_{M}(r))^{-\alpha/2}\norm{g_{h}(X^{M,N}_{k_{M}(r)})}_{L^{\infty}}dr\nonumber\\&\lesssim (t-s)^{1-\alpha/2}(1+\sup_{k}\norm{X^{M,N}_{t_{k}}}_{L^{\infty}}^{2\tilde{m}+1}).
    \end{align*}
    Second, using $s-k_{M}(r)\geq s-r$, we obtain
    \begin{align*}
    \MoveEqLeft
      \norm[\bigg]{\int_{0}^{s}P_{s-k_{M}(r)}^N(P_{t-s}^N-\operatorname{Id})g_{h}(X^{M,N}_{k_{M}(r)})dr}_{\calC^{\alpha}}\\&\lesssim \int_{0}^{s}(s-k_{M}(r))^{-1+\epsilon} \norm{(P_{t-s}^N-\operatorname{Id})g_{h}(X^{M,N}_{k_{M}(r)})}_{\calC^{-(2-\alpha-2\epsilon)}}dr
      \nonumber\\&\lesssim (t-s)^{1-\alpha/2-\epsilon}(1+\sup_{k}\norm{X^{M,N}_{t_{k}}}_{L^{\infty}}^{2\tilde{m}+1}).
    \end{align*} 
    Taking $p$-th moment and using \cref{prop:apriori} gives the claim.
\end{proof}

\section{Proof of Theorem \ref{thm:main2}}\label{sec:poly-f}
In this section, we prove that the same error bound of $M^{-1+\epsilon}+N^{-1/2+\epsilon}$ can be reached also in the superlinearly growing case.
There are several steps that become significantly more involved.
We mentioned and addressed the question of a priori bounds in \cref{sec:apriori}.
The lack of global Lipschitz bound also requires changes in the buckling, as the mild form of Gr\"onwall's lemma as in \cref{prop:Gronwall} is no longer applicable.
The buckling steps will be therefore performed by switching to the variational formulation, where the one-sided Lipschitzness is easier to exploit.
Finally, the superlinear growth prevents us from appealing to Girsanov's theorem as in \cref{sec:bounded-f}, which is then replaced by using a more involved stochastic sewing argument.

\begin{remark}\label{rem:weak}
As mentioned, we will frequently switch to the the weak/variational form of certain equations. Let us make this more precise. Recall (cf. e.g. \cite{bell}) that if the functions $F,G\in C([0,T]\times \T)$, $H\in C(\T)$ satisfy for all $(t,x)\in [0,T]\times \T$ the mild formulation 
\begin{equ}
    F(t,x)=\int_{\T}p_t(x-y)H(y)dy+\int_0^t\int_\T p_{t-s}(x-y)G(s,y) ds dy,
\end{equ}
then $F$ is the unique weak solution of
\begin{equ}
    \partial_t F=\Delta F+G,\qquad F_0=H,
\end{equ}
and in particular it satisfies the energy inequality
\begin{equ}
    \partial_t\|F(t,\cdot)\|_{L^2}^2\leq 2\langle F(t,\cdot),G(t,\cdot)\rangle,
\end{equ}
where $\langle\cdot,\cdot\rangle$ is the $L^2(\T)$ inner product. As a consequence, for $p\geq 2$ one also has
\begin{equ}
    \partial_t\|F(t,\cdot)\|_{L^2}^p\leq p\|F(t,\cdot)\|_{L^2}^{p-2}\langle F(t,\cdot),G(t,\cdot)\rangle.
\end{equ}
The same conclusion holds if the heat kernel $P$ is replaced by $P^N$.
\end{remark}

Since we deal with growing nonlinearities, we introduce some weighted spaces.
For polynomial weights $\omega(x)=(1+\abs{x}^{2})^{-\beta/2}$, $\beta\geq 1$, we define for $k\in\N_{0}$ the weighted 
H\"older space
$$ C^{k}_{\omega}=\{f\in\mathcal{S}'\mid \omega f\in C^{k}_{b}\}, \quad \norm{f}_{C^{k}_{\omega}}:=\norm{\omega f}_{C^{k}_{b}}.$$
One can easily show the following semigroup estimate on the weighted space: for any $0\leq s\leq t\leq 1$ one has
\begin{align}\label{eq:HK-3}
    \norm{(P_{t}^{\R}-P_{s}^{\R})u}_{C^{0}_{\omega}}\lesssim \abs{t-s} \norm{u}_{C^{2}_{\omega}}.
\end{align}
Notice that, since $g_h$ and its derivatives of order $i=1,2,3$ satisfy a polynomial growth bound, there exists an appropriate weight $\omega$, such that $g_{h}\in C^{3}_{\omega}$. For the remainder of the article we fix such a  weight. 

The error between the true solution $u$ and the numerical solution $X^{M,N}$ can be decomposed as follows. 
\begin{align*}
    \E[\sup_{t\in[0,T]}\norm{u_{t}-X^{M,N}_{t}}_{L^{2}}^p]
    &\lesssim 
    \E[\sup_{t\in[0,T]}\norm{u_{t}-X^{h}_{t}}_{L^{2}}^p]
    +\E[\sup_{t\in[0,T]}\norm{X^{h}_{t}-X^{h,N}_{t}}_{L^{2}}^p] 
    \\&\qquad
    +\E[\sup_{t\in[0,T]}\norm{ X^{h,N}_{t}- X^{M,N}_{t}}_{L^{2}}^p].
\end{align*}
The first term $u-X^h$ is the error coming from replacing $f$ by $g_h$ in the continuum equation. It is estimated in \cref{lem:a}.
The second term $X^h-X^{h,N}$ is the error of the spectral Galerkin approximation of $X^h.$ This is bounded in \cref{lem:b}. The third term $X^{h,N}- X^{M,N}$ is the most crucial contribution to the error and the most challenging to bound by a quantity of order $M^{-1+\eps}+N^{-1/2+\epsilon}$. This is the content of \cref{lem:c}.
These three estimates together yield the main result, \cref{thm:main2}.
\begin{lemma}\label{lem:b}
    Let \cref{ass:f} hold.
    Then for any $p\geq 1$, $\epsilon\in (0,1/2)$, there exists a constant $C=C(T,\epsilon, p, m,K,\cM)$ such that
    \begin{align*}
        \E\sup_{t\in[0,T]}\norm{X^{h}_{t}-X^{h,N}_{t}}_{L^{2}}^p\leq C N^{-p(1/2-\epsilon)}.
    \end{align*}
\end{lemma}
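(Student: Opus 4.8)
The plan is to estimate the Galerkin error $X^h-X^{h,N}$ by working in the variational (weak) formulation, exactly as advertised in the introduction of \cref{sec:poly-f}, so that the one-sided Lipschitz bound \eqref{eq:g-bound1} on $g_h$ can be exploited directly. Set $D_t:=X^h_t-X^{h,N}_t$. From \eqref{eq:auxSPDE-mild} and \eqref{eq:auxGal} one has
\begin{equ}
    D_t=(\operatorname{Id}-\Pi_N)\Big(P_tu_0+\int_0^tP_{t-s}g_h(X^h_s)\,ds+O_t\Big)+\int_0^tP^N_{t-s}\big(g_h(X^h_s)-g_h(X^{h,N}_s)\big)\,ds.
\end{equ}
Denote by $\eta_t$ the first (``frozen'') term, which is $(\operatorname{Id}-\Pi_N)$ applied to $X^h_t$, and let $\bar D_t:=D_t-\eta_t$, so that $\bar D$ solves the mild equation with forcing $g_h(X^h_s)-g_h(X^{h,N}_s)=g_h(X^h_s)-g_h(\bar D_s+\eta_s+\Pi_N X^h_s)$ and zero initial condition (and lives in the range of $\Pi_N$). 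First I would record, using \cref{cor:cont-sol-bounds} together with the spectral gap estimate $\|(\operatorname{Id}-\Pi_N)v\|_{L^2}\lesssim N^{-\sigma}\|v\|_{\calC^\sigma}$ valid for $\sigma>0$, that $\E\sup_{t\in[0,T]}\|\eta_t\|_{L^2}^p\lesssim N^{-p(1/2-\epsilon)}$; this is where the rate $N^{-1/2+\epsilon}$ enters.

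Next I would apply the energy inequality from \cref{rem:weak} (with $P$ replaced by $P^N$) to $\bar D$: writing $G_s:=\Pi_N\big(g_h(X^h_s)-g_h(X^{h,N}_s)\big)$,
\begin{equ}
    \partial_t\|\bar D_t\|_{L^2}^2\leq 2\langle \bar D_t,G_t\rangle.
\end{equ}
Now split $g_h(X^h_s)-g_h(X^{h,N}_s)=\big(g_h(X^h_s)-g_h(X^h_s-\eta_s)\big)+\big(g_h(X^h_s-\eta_s)-g_h(X^{h,N}_s)\big)$; the second difference is $g_h$ evaluated at two points whose difference is exactly $\bar D_s$ (modulo that $\Pi_N\bar D_s=\bar D_s$ since $X^h_s-\eta_s-X^{h,N}_s=\bar D_s$ after projecting — one must check this bookkeeping carefully), so by \eqref{eq:g-bound1} it contributes at most $K\|\bar D_s\|_{L^2}^2$ to $\langle\bar D_s,\cdot\rangle$. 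The first difference is controlled by the local Lipschitz bound \eqref{eq:g-bound2}: $\|g_h(X^h_s)-g_h(X^h_s-\eta_s)\|_{L^2}\lesssim \|(1+|X^h_s|^{2\tilde m}+|X^h_s-\eta_s|^{2\tilde m})\eta_s\|_{L^2}$, which by Hölder is bounded by $(1+\|X^h_s\|_{L^\infty}+\|\eta_s\|_{L^\infty})^{2\tilde m}\|\eta_s\|_{L^2}$; then Cauchy–Schwarz and Young's inequality turn $\langle\bar D_s,\text{(this)}\rangle$ into $\tfrac12\|\bar D_s\|_{L^2}^2$ plus a remainder $\lesssim (1+\|X^h_s\|_{C_T\calC^{1/4}}+\|\eta\|_{C_TL^\infty})^{4\tilde m}\|\eta\|_{C_TL^2}^2$ (using also $\|\eta_s\|_{L^\infty}\lesssim\|X^h_s\|_{\calC^{1/2}}$). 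Integrating and applying Grönwall's lemma pathwise gives $\sup_{t\le T}\|\bar D_t\|_{L^2}^2\lesssim e^{CT}\,\Lambda\,\|\eta\|_{C_TL^2}^2$ where $\Lambda:=(1+\|X^h\|_{C_T\calC^{1/2}})^{4\tilde m}$.

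Finally, take $p$-th moments: $\E\sup_t\|\bar D_t\|_{L^2}^p\lesssim \E\big[\Lambda^{p/2}\|\eta\|_{C_TL^2}^p\big]\leq \big(\E\Lambda^p\big)^{1/2}\big(\E\|\eta\|_{C_TL^2}^{2p}\big)^{1/2}$ by Cauchy–Schwarz, and both factors are finite and (for the second) of order $N^{-p(1/2-\epsilon)}$ by \cref{cor:cont-sol-bounds} and the bound on $\eta$ above; combining with $D=\bar D+\eta$ finishes the proof. The main obstacle I anticipate is the careful bookkeeping around the projection $\Pi_N$ — ensuring that the ``diagonal'' part of the nonlinearity difference really is $g_h$ evaluated at two points differing by a function on which the one-sided Lipschitz bound can be applied inside the $L^2$ inner product against $\bar D_t\in\operatorname{ran}\Pi_N$ — together with making sure the superlinear weight factors $\Lambda$ are integrable to all orders (which they are, by the $\calC^{1/2-}$ bound of \cref{cor:cont-sol-bounds}) and are decoupled from the small factor $\|\eta\|_{C_TL^2}$ cleanly via Cauchy–Schwarz rather than entangled in the exponent.
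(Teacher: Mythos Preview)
Your approach is correct and follows the same overall strategy as the paper: switch to the variational formulation (\cref{rem:weak}), use the one-sided Lipschitz bound \eqref{eq:g-bound1} on the ``diagonal'' part of the nonlinearity difference, the local Lipschitz bound \eqref{eq:g-bound2} on the cross term carrying the small factor, apply Gr\"onwall pathwise, and then decouple the superlinear weight from the small factor by Cauchy--Schwarz in $\omega$ before invoking the a priori bounds of \cref{cor:cont-sol-bounds}. The decomposition, however, is genuinely different. The paper subtracts the Ornstein--Uhlenbeck processes, sets $R^h=X^h-O$, $R^{h,N}=X^{h,N}-O^N$, and does the energy estimate on $R^h-R^{h,N}$; this produces \emph{three} separate small contributions --- $\|u_0-\Pi_N u_0\|_{L^2}$, $\|O-O^N\|_{L^2}$, and $\|(\operatorname{Id}-\Pi_N)g_h(X^{h,N})\|_{L^2}$ --- each requiring its own treatment (the last via $H^\alpha$-bounds on $g_h(X^{h,N})$). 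Your orthogonal splitting $D=\bar D+\eta$ with $\eta=(\operatorname{Id}-\Pi_N)X^h$ is tidier: there is a single small term $\eta$, and because $\bar D\in\operatorname{ran}\Pi_N$ the projection on the forcing disappears automatically in $\langle\bar D,\Pi_N(\cdot)\rangle$, so the extra $(\operatorname{Id}-\Pi_N)g_h$ term never appears. The price is that you need $\|\Pi_N X^h_s\|_{L^\infty}$ bounded uniformly in $N$, which is \emph{not} automatic ($\Pi_N$ is unbounded on $L^\infty$); you must invoke the classical Dirichlet-kernel estimate $\|(\operatorname{Id}-\Pi_N)v\|_{L^\infty}\lesssim N^{-\theta}\|v\|_{\calC^\theta}$ for $\theta\in(0,1)$ together with the $\calC^{1/2-\epsilon}$ a priori bound on $X^h$. (Minor bookkeeping: your displayed identity $X^{h,N}_s=\bar D_s+\eta_s+\Pi_N X^h_s$ has the wrong sign --- it should read $X^{h,N}_s=\Pi_N X^h_s-\bar D_s$ --- but your subsequent splitting and estimates are correct as written.)
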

\begin{proof}
Let $R^{h}_t:=X^{h}_t-O_t-P_tu_0$ and $R^{h,N}_t:= X^{h,N}_t-O^N_t-P^{N}_t u_0$, $t\in[0,T]$. 
Then we clearly have
\begin{align}\label{eq:triangle}
    \norm{X^{h}_{t}-X^{h,N}_{t}}_{L^{2}}^{p}\lesssim \norm{R^{h}_{t}-R^{h,N}_{t}}_{L^{2}}^{p} + \norm{O_t-O^{N}_{t}}_{L^{2}}^p+\norm{u_{0}-\Pi_{N}u_{0}}_{L^{2}}^{p} .
\end{align}
The last term can be bounded using the inequality $\norm{\Pi_{N}u-u}_{L^{2}}\lesssim N^{-\alpha}\norm{u}_{H^{\alpha}}\lesssim N^{-\alpha}\norm{u}_{\calC^{\alpha+\epsilon}}$ for any $\alpha\geq 0$, $\epsilon>0$, due to the embedding $\mathcal{C}^{\beta}\subset H^{\beta'}$ for $\beta>\beta'$, and we take $\alpha=1/2-\epsilon$.
The second term is bounded in \eqref{eq:ou-bound}.
For the first term, we use \cref{rem:weak} with $F=R^{h}- R^{h,N}$, $H=0$, and $G=g_h(X^{h})-\Pi_{N}(g_h(X^{h,N}))$. We get that
for $p\geq 2$
\begin{align*}
    \norm{R^{h}_{t}-R^{h,N}_{t}}_{L^{2}}^{p}&\leq p\int_{0}^{t} \norm{R^{h}_s - R^{h,N}_s}_{L^{2}}^{p-2} \langle g_{h}(X^{h}_{s})-\Pi_{N}(g_{h}(X^{h,N}_{s})),R^{h}_{s}-R^{h,N}_{s}\rangle ds.
\end{align*}
Smuggling in 
$g_{h}(X^{h,N}_{s})$, using 
the definition of $R^{h},R^{h,N}$ and 
the bounds for $g_h$ from \cref{lem:g_h-bounds}, as well as Cauchy-Schwartz inequality followed by Young's inequality for $p_{1}=p/(p-1)$ and $p_{2}=p$, we arrive at
    \begin{align*}
    \MoveEqLeft
        \norm{R^{h}_{t}-R^{h,N}_{t}}_{L^{2}}^{p}
        \\&\leq p\int_{0}^{t} \norm{R^{h}_s-R^{h,N}_s}_{L^{2}}^{p-2} \langle g_{h}( X^{h}_{s})-g_{h}( X^{h,N}_{s}),X^{h}_{s}-X^{h,N}_{s}\rangle ds 
        \\&\qquad-p\int_{0}^{t} \norm{R^{h}_s-R^{h,N}_s}_{L^{2}}^{p-2} \langle g_{h}(X^{h}_{s})-g_{h}(X^{h,N}_{s}),O_{s}^{N}-O_{s}\rangle ds 
        \\&\qquad + p\int_{0}^{t} \norm{R^{h}_s-R^{h,N}_s}_{L^{2}}^{p-2} \langle g_{h}( X^{h,N}_{s})-\Pi_{N}(g_{h}( X^{h,N}_{s})),R^{h}_{s}-R^{h,N}_{s}\rangle ds 
        \\& \lesssim  \int _{0}^{t} \norm{R^{h}_s-R^{h,N}_s}_{L^{2}}^{p-2}\norm{X^{h}_{s}-X^{h,N}_{s}}_{L^2}^2 ds 
        \\&\qquad + (1+\norm{X^{h}}_{C_{T}L^{\infty}}^{2\tilde{m}}+\norm{X^{h,N}}_{C_{T}L^{\infty}}^{2\tilde{m}})\\&\hspace{3cm}\times \int _{0}^{t} \norm{R^{h}_s-R^{h,N}_s}_{L^{2}}^{p-2} \norm{X^{h}_{s}-X^{h,N}_{s}}_{L^2}\norm{O_{t}-O^{N}_{t}}_{L^2} ds 
        \\&\qquad+ \int_{0}^{t}\norm{R^{h}_s-R^{h,N}_s}_{L^{2}}^{p}ds + \int_{0}^{t}\norm{g_{h}(X^{h,N}_{s})-\Pi_{N}(g_{h}(X^{h,N}_{s}))}_{L^{2}}^pds.
        \end{align*}
        Using 
        \eqref{eq:triangle} we further obtain
        \begin{align*}
        \MoveEqLeft
        \norm{R^{h}_{t}-R^{h,N}_{t}}_{L^{2}}^{p}
        \\&\lesssim   \int _{0}^{t} \norm{R^{h}_s-R^{h,N}_s}_{L^{2}}^{p} ds
        \\&\qquad +(1+\norm{X^{h}}_{C_{T}L^{\infty}}^{2\tilde{m}}+\norm{X^{h,N}}_{C_{T}L^{\infty}}^{2\tilde{m}}) \int _{0}^{t} \norm{R^{h}_s-R^{h,N}_s}_{L^{2}}^{p-2}\norm{O_{t}-O^{N}_{t}}_{L^{2}}^2 ds 
        \\&\qquad + (1+\norm{X^{h}}_{C_{T}L^{\infty}}^{2\tilde{m}}+\norm{X^{h,N}}_{C_{T}L^{\infty}}^{2\tilde{m}}) \int _{0}^{t} \norm{R^{h}_s-R^{h,N}_s}_{L^{2}}^{p-1}\norm{O_{t}-O^{N}_{t}}_{L^{2}} ds
        \\&\qquad +  N^{-p\alpha}\norm{g_{h}(\tilde X^{h,N})}_{C_T H^{\alpha}}^p.
    \end{align*}
    Applying again twice Young's inequality for $p_{1}=p/(p-1)$ and $p_{2}=p$ and for $p_{1}=p/(p-2)$ and $p_{2}=p/2$ yields
     \begin{align*}
        \MoveEqLeft
        \norm{R^{h}_{t}-R^{h,N}_{t}}_{L^{2}}^{p}
        \\&\lesssim    \int _{0}^{t} \norm{R^{h}_s-R^{h,N}_s}_{L^{2}}^{p} ds
        \\&\qquad +(1+\norm{X^{h}}_{C_{T}L^{\infty}}^{2\tilde{m}}+\norm{X^{h,N}}_{C_{T}L^{\infty}}^{2\tilde{m}})^p \norm{O-O^{N}}_{C_{T}L^{\infty}}^p
        \\&\qquad +  N^{-p\alpha}\norm{g_{h}(\tilde X^{h,N})}_{C_T H^{\alpha}}^p.
    \end{align*}     
    We choose $\alpha\in (0,1/2)$ and $\epsilon'\in (0,1/2-\alpha)$ and use the embedding $\mathcal{C}^{\beta}\subset H^{\beta'}$ for $\beta>\beta'$ and the bound for $g'_h$ from \cref{lem:g_h-bounds} to obtain that
    \begin{align*}
        \norm{g_{h}( X^{h,N})}_{C_T H^{\alpha}}\lesssim\norm{g_{h}( X^{h,N})}_{C_T \calC^{\alpha+\epsilon'}}\lesssim 
     (1+\norm{X^{h,N}}_{C_{T}L^{\infty}}^{2\tilde m})\norm{X^{h,N}}_{C_{T}\calC^{\alpha+\epsilon'}}.
    \end{align*} 
    Hence Gr\"onwall's inequality together with \eqref{eq:ou-bound} and \eqref{eq:triangle} yield
    \begin{align*}
        \norm{X^{h}_{t}-X^{h,N}_{t}}_{L^{2}}^{p}
        &\lesssim N^{-p(1/2-\epsilon)}\norm{u_{0}}_{\calC^{1/2}}
        +N^{-p\alpha}\norm{O}_{C_{T}\calC^{\alpha+\epsilon'}}^p(1+\norm{X^{h}}_{C_{T}L^{\infty}}^{2\tilde{m}}
        +\norm{X^{h,N}}_{C_{T}L^{\infty}}^{2\tilde{m}})^p
        \\&\qquad +N^{-p\alpha}(1+\norm{X^{h,N}}_{C_T L^{\infty}}^{2\tilde m})^p\norm{X^{h,N}}_{C_{T}\calC^{\alpha+\epsilon'}}^p.
    \end{align*}
    The claim thus follows from 
    the moment bounds from \cref{cor:cont-sol-bounds} and the arbitrariness of $\alpha\in(0,1/2)$.

\end{proof} 
 \begin{lemma}\label{lem:a}
Let \cref{ass:f} hold and let $p\geq 1$. Then there exists a constant $C=C(T,K,m,p,\cM)$ such that
\begin{align*}
    \E\sup_{t\in[0,T]}\norm{u_t-X^{h}_t}_{L^{2}}^{p}\leq C M^{-p}.
\end{align*} 
\end{lemma}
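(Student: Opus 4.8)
The plan is to estimate the difference $w:=u-X^h$ directly. Since $u$ and $X^h$ share the same initial datum and are driven by the same noise $O$, subtracting \eqref{eq:mild} from \eqref{eq:auxSPDE-mild} gives
\begin{align*}
    w_t=\int_0^tP_{t-s}\big(f(u_s)-g_h(X^h_s)\big)\,ds,
\end{align*}
i.e.\ $w$ solves, in the mild sense, the heat equation with right-hand side $f(u)-g_h(X^h)$ and zero initial condition. Since $f$ is only locally Lipschitz, \cref{prop:Gronwall} is not applicable, so I would instead pass to the variational formulation of \cref{rem:weak}: both $u$ and $X^h$ have paths in $C_TC(\T)\hookrightarrow C([0,T]\times\T)$ by \cref{prop:wp} and \cref{cor:cont-sol-bounds}, and $f,g_h$ are continuous, so the hypotheses of \cref{rem:weak} hold with $F=w$, $H=0$, $G=f(u)-g_h(X^h)$, and hence
\begin{align*}
    \partial_t\|w_t\|_{L^2}^2\leq 2\big\langle w_t,\,f(u_t)-g_h(X^h_t)\big\rangle .
\end{align*}

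The crucial step is the splitting $f(u_t)-g_h(X^h_t)=\big(f(u_t)-f(X^h_t)\big)+\big(g_0(X^h_t)-g_h(X^h_t)\big)$, where $g_0=f$. For the first term, the global one-sided Lipschitz bound $(x-y)(f(x)-f(y))\leq K|x-y|^2$ from \cref{ass:f} gives, after integrating over $\T$, that $\langle w_t,f(u_t)-f(X^h_t)\rangle\leq K\|w_t\|_{L^2}^2$. For the second term, the estimate $|g_h(x)-g_0(x)|\leq Ch(1+|x|^{4m+2})$ from \cref{lem:g_h-bounds} (with $h=T/M$) together with $\lambda(\T)=1$ yields $\|g_0(X^h_t)-g_h(X^h_t)\|_{L^2}\leq Ch(1+\|X^h\|_{C_TL^\infty}^{4m+2})$, so that by Cauchy--Schwarz and Young's inequality $2\langle w_t,g_0(X^h_t)-g_h(X^h_t)\rangle\leq\|w_t\|_{L^2}^2+Ch^2(1+\|X^h\|_{C_TL^\infty}^{8m+4})$. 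Combining these bounds,
\begin{align*}
    \partial_t\|w_t\|_{L^2}^2\leq(2K+1)\|w_t\|_{L^2}^2+Ch^2\big(1+\|X^h\|_{C_TL^\infty}^{8m+4}\big),
\end{align*}
and since $w_0=0$, Gr\"onwall's inequality (in integrated form, to accommodate that the energy inequality may only hold after integration in time) yields the pathwise bound $\sup_{t\in[0,T]}\|w_t\|_{L^2}\leq C(T,K,m)\,h\,(1+\|X^h\|_{C_TL^\infty}^{4m+2})$.

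It then remains to raise to the $p$-th power and take expectations: by the embedding $\calC^{1/2-\lambda-\eps}\hookrightarrow L^\infty(\T)$ (valid once $1/2-\lambda-\eps>0$) and \cref{cor:cont-sol-bounds} one has $\E\|X^h\|_{C_TL^\infty}^{(4m+2)p}\lesssim 1$, whence $\E\sup_{t\in[0,T]}\|w_t\|_{L^2}^p\lesssim h^p=(T/M)^p\lesssim M^{-p}$. The argument is short, and the only points requiring (routine) care are the applicability of \cref{rem:weak} — handled by the continuity of the data noted above — and verifying that the polynomially growing term produced by $|g_h-g_0|$ is absorbed by the a priori moment bounds of \cref{cor:cont-sol-bounds}; no serious obstacle arises, and indeed this lemma is considerably easier than the companion estimate \cref{lem:c} for $X^{h,N}-X^{M,N}$.
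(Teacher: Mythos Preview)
Your proposal is correct and follows essentially the same approach as the paper: pass to the variational formulation via \cref{rem:weak}, split the nonlinearity into a one-sided Lipschitz piece and a $g_h-g_0$ piece, apply Gr\"onwall, and close with the a priori moment bounds. The only cosmetic differences are that the paper splits as $(g_h(u)-g_h(X^h))+(g_0(u)-g_h(u))$ (using the one-sided Lipschitz bound for $g_h$ from \cref{lem:g_h-bounds} and evaluating $g_0-g_h$ at $u$, hence invoking \cref{prop:wp} for the moment bounds), whereas you split as $(f(u)-f(X^h))+(g_0(X^h)-g_h(X^h))$ and invoke \cref{cor:cont-sol-bounds}; and the paper works directly with the $L^2$-norm raised to the $p$-th power in the energy inequality rather than doing $p=2$ pathwise and raising to the $p$-th power afterwards. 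Neither variation affects the argument.
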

\begin{proof}
We use \cref{rem:weak} with $F=u-X^h$, $G=f(u)-g_h(X^h)$, and $H=0$.
Using the energy estimate and the bounds on $g_h$ and $g_h-g_0$ from \cref{lem:g_h-bounds},
we obtain for $p\geq 2$, $t\in[0,T]$,
\begin{align*}
    \norm{u_t-X^{h}_t}_{L^{2}}^{p}&\leq
    p\int_{0}^{t}\norm{u_s-X^{h}_s}_{L^{2}}^{p-2}\langle  u_s-X^{h}_s, g_{h}(u_s)-g_{h}(X^{h}_s)\rangle ds
    \\&\qquad +p\int_{0}^{t}\norm{u_s-X^{h}_s}_{L^{2}}^{p-2}\langle  u_s-X^{h}_s, g_{0}(u_s)-g_{h}(u_s)\rangle ds
    \\&\lesssim  \int_{0}^{t}\norm{u_s-X^{h}_s}_{L^{2}}^pds + h (1+\norm{u}_{C_{T}L^{\infty}}^{4m+2})\int_{0}^{t}\norm{u_s-X^{h}_s}_{L^{2}}^{p-1} ds.
\end{align*}
The claim thus follows from applying Young's inequality with $p_{1}=p$ and $p_{2}=p/(p-1)$ to the second term, using Gr\"onwall's inequality, taking expectations, and using the a priori bounds from \cref{prop:wp}.  
\end{proof}
\begin{lemma}\label{lem:c}
   Let \cref{ass:f} hold.
    Then for any $p\geq 1$, $\epsilon\in (0,1/4)$, there exists a constant $C=C(T,\epsilon, p,  K, \cM)$  such that the following bound holds
    \begin{align*}
        \E\sup_{t\in[0,T]}\norm{X^{h,N}_{t}-X^{M,N}_{t}}_{L^{2}}^{p}\leq C M^{p(-1+\epsilon)}.
    \end{align*}
\end{lemma}
 To prove the lemma, a key estimate is the following analogue of \cref{prop:main-prop}.
\begin{proposition}\label{prop:crucial}
    Let \cref{ass:f} hold.
    Then for any $p\geq 1$, $\epsilon\in (0,1/4)$, there exists a constant $C=C(T,\epsilon, p,  K, \cM)$  such that the following bound holds
   \begin{align*}
       \E\sup_{t\in[0,T]}\norm[\bigg]{\int_{0}^{t}P^{N}_{t-s}[g_{h}(X^{M,N}_{s})-g_{h}(X^{M,N}_{k_{M}(s)})]ds}_{L^{\infty}}^{p}\leq C M^{p(-1+\epsilon)}.
   \end{align*}
\end{proposition}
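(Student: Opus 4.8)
The plan is to follow the architecture of the proof of \cref{prop:main-prop} as closely as possible, upgrading it in two respects: Girsanov's theorem is no longer available for superlinear $f$, so the temporal estimate must be obtained by a direct stochastic sewing argument applied to $X^{M,N}$ itself; and, unlike the shifted Ornstein--Uhlenbeck process that served as reference process there, $X^{M,N}$ carries a drift, which will be controlled by the a priori bounds of \cref{sec:apriori}. Throughout, the bounds of \cref{cor:apriori} — both for $X^{M,N}$ in $C_T^{\lambda/2}\calC^{1/2-\lambda-\epsilon}$ and, crucially, the improved time-regularity of $R^{M,N}=X^{M,N}-O^N$ in $C_T^{1-\alpha/2-\epsilon}\calC^{\alpha}$ — play the role that the bounds on the Ornstein--Uhlenbeck process played in \cref{sec:bounded-f}. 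First I would reduce, via the continuity-type criterion used in the passage from \cref{prop:main-prop} to \cref{cor:beforeGirs-temp} and then via Littlewood--Paley blocks together with $B^{\epsilon}_{p,p}\hookrightarrow\calC^{\epsilon-1/p}\hookrightarrow L^{\infty}$, to proving for every $j\geq-1$, $x\in\T$, $R\in[0,T]$ and all $0\le s<t\le R$ a bound of the form
\begin{align*}
\big\|\textstyle\int_{s}^{t}\Delta_{j}P^{N}_{R-r}[g_{h}(X^{M,N}_{r})-g_{h}(X^{M,N}_{k_{M}(r)})](x)\,dr\big\|_{L^{p}(\Omega)}\lesssim 2^{-j\epsilon}M^{-1+\epsilon}|t-s|^{\gamma}
\end{align*}
for some $\gamma>0$.

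To this end I would apply \cref{lem:ss} with germ $A_{st}=\E_{s}\int_{s}^{t}\Delta_{j}P^{N}_{R-r}[g_{h}(X^{M,N}_{r})-g_{h}(X^{M,N}_{k_{M}(r)})](x)\,dr$. As in \cref{prop:main-prop}, additivity of the underlying integral gives $\delta A_{sut}=\E_{s}\mathcal{A}_{ut}-\E_{u}\mathcal{A}_{ut}$, hence $\E_{s}[\delta A_{sut}]=0$ and \eqref{eq:SSL-cond2} holds with $C_{2}=0$; the conditions \eqref{eq:ss1}, \eqref{eq:ss2} identifying the sewn process with the raw integral follow from the crude bound $\|\mathcal{A}_{st}-A_{st}\|_{L^{p}(\Omega)}\le 2\|\mathcal{A}_{st}\|_{L^{p}(\Omega)}\lesssim(t-s)\,(1+\sup_{r}\|X^{M,N}_{r}\|_{L^{\infty}})^{2\tilde m+1}$, finite by \cref{prop:apriori}. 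Everything thus comes down to the germ bound \eqref{eq:SSL-cond1}, $\|A_{st}\|_{L^{p}(\Omega)}\lesssim 2^{-j\epsilon}M^{-1+\epsilon}(R-t)^{-1/4-\epsilon}|t-s|^{1/2+\epsilon}$. For $|t-s|\le 3M^{-1}$ this is obtained by estimating $A_{st}$ by the absolute value under $\E_s$, writing the $g_h$-increment as $\big(\int_0^1 g_h'(\lambda X^{M,N}_r+(1-\lambda)X^{M,N}_{k_M(r)})d\lambda\big)(X^{M,N}_r-X^{M,N}_{k_M(r)})$, applying the Besov product estimate with $\|g_h'(v)\|_{\calC^{1/2-\epsilon/2}}\lesssim(1+\|v\|_{L^\infty})^{2\tilde m}(1+\|v\|_{\calC^{1/2-\epsilon/2}})$, Hölder's inequality in $\Omega$, and the a priori bounds giving $\|X^{M,N}_r-X^{M,N}_{k_M(r)}\|_{\calC^{-1/2+\epsilon}}\lesssim M^{-1/2+\epsilon}$, together with $\|\Delta_j P^N_{R-r}v\|_{L^\infty}\lesssim 2^{-j\epsilon}(R-r)^{-1/4}\|v\|_{\calC^{-1/2+\epsilon}}$ — exactly as in \eqref{eq:long-smallt-s}. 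For $|t-s|>3M^{-1}$ I would, as there, let $t'$ be the second gridpoint $\geq s$, treat $\int_{s}^{t'}$ as in the small-time case, and for $\int_{t'}^{t}$ (where $k_{M}(r)-s\geq(r-s)/2$) split $g_h(X^{M,N}_r)-g_h(X^{M,N}_{k_M(r)})$ into $[g_h(X^{M,N}_r)-g_h(X^{M,N}_{k_M(r)}+O^N_r-O^N_{k_M(r)})]$ plus $[g_h(X^{M,N}_{k_M(r)}+O^N_r-O^N_{k_M(r)})-g_h(X^{M,N}_{k_M(r)})]$. The first bracket has argument difference $R^{M,N}_r-R^{M,N}_{k_M(r)}$, which by \cref{cor:apriori} (taking $\alpha=2\epsilon$, so $\calC^{2\epsilon}\hookrightarrow L^\infty$) is $\lesssim M^{-1+\epsilon}$, and is therefore bounded directly with no cancellation; the second bracket is the crucial one.

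For the second bracket I would condition on $\F_s$, write $X^{M,N}_r=P^N_{r-s}X^{M,N}_s+D^s_r+G^N_{s,r}$ with $D^s_r=\int_s^r P^N_{r-k_M(v)}g_h(X^{M,N}_{k_M(v)})dv$ and $G^N_{s,r}=O^N_r-P^N_{r-s}O^N_s$ a centered Gaussian field of variance $Q^N(r-s)$ independent of $\F_s$, and after transferring into both arguments the common drift $D^s_{k_M(r)}$ (at cost $\lesssim\|D^s_r-D^s_{k_M(r)}\|\lesssim M^{-1+\epsilon}$ by the time-regularity of $R^{M,N}$) and the $\F_s$-measurable difference $(P^N_{r-s}-P^N_{k_M(r)-s})X^{M,N}_s$ (at cost $\lesssim M^{-1+\epsilon}$, semigroup estimates as in \eqref{eq:s-t-bound}), extract the cancellation exactly as in \eqref{eq:s1}--\eqref{eq:Q-bound}: in the drift-free idealization the remaining quantity is $(P^{\R}_{Q^N(r-s)}g_h)(P^N_{r-s}X^{M,N}_s)-(P^{\R}_{Q^N(k_M(r)-s)}g_h)(P^N_{k_M(r)-s}X^{M,N}_s)$, which splits into a "change of argument" part controlled by semigroup estimates and a "change of smoothing" part controlled via the weighted heat kernel bound \eqref{eq:HK-3} and $|Q^N(r-s)-Q^N(k_M(r)-s)|\lesssim M^{-1+\epsilon}(k_M(r)-s)^{-1/2+\epsilon}$ (the analogue of \eqref{eq:Q-est}), the singularity $(k_M(r)-s)^{-1/2+\epsilon}$ integrating to $|t-s|^{1/2+\epsilon}$ and the weight $\omega$ being absorbed by the a priori $L^\infty$-moments of $X^{M,N}_s$.

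The main obstacle is precisely this drift term $D^s_r$: the argument of $g_h$ is not $\F_s$-measurable up to an independent Gaussian, and although $D^s_r$ is small in increments ($\|D^s_r-D^s_{k_M(r)}\|\lesssim M^{-1+\epsilon}$) it is only of size $O(r-s)$ in absolute value, so the idealized conditional expectation above is exact only up to correction terms. Making these rigorous requires keeping the common drift $D^s_{k_M(r)}$ frozen as an additive shift in both arguments throughout, and a nested conditioning — first on $\F_{k_M(r)}$, which resolves the innovation $G^N_{k_M(r),r}$ and, combined with the $P^N_{r-k_M(r)}$-smoothing of the earlier Gaussian, exactly reconstitutes the full $Q^N(r-s)$-smoothing (so that no isolated $P^{\R}_{Q^N(r-k_M(r))}-\operatorname{Id}$ term of the unacceptable size $M^{-1/2}$ is ever produced), and only then on $\F_s$ — all while using the higher a priori regularity of $R^{M,N}$ to guarantee that only drift \emph{increments}, never drift values, enter the final estimates. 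This bookkeeping, which has no counterpart in \cref{sec:bounded-f}, is the most delicate part of the argument; once it is in place, \cref{lem:ss} and the reductions of the first paragraph yield the claim, and combining it with \cref{lem:a} and \cref{lem:b} (via \cref{lem:c}) gives \cref{thm:main2}.
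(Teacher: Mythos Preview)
Your reduction to a Littlewood--Paley block estimate and the small-time case $|t-u|\leq 3M^{-1}$ are fine, and you correctly identify the drift of $X^{M,N}$ as the central obstruction. The gap is in your choice of germ. With $A_{st}=\E_s\int_s^t\Delta_jP^N_{R-r}[g_h(X^{M,N}_r)-g_h(X^{M,N}_{k_M(r)})](x)\,dr$ you indeed get $\E_s[\delta A_{sut}]=0$ for free, but you then have to bound $\|A_{ut}\|_{L^p}$, and here the Gaussian-convolution trick of \cref{prop:main-prop} fails: after any splitting, the argument of $g_h$ contains $R^{M,N}_{k_M(r)}$ (or $D^u_{k_M(r)}$), which is neither $\F_u$-measurable nor independent of $\F_u$. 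Your proposed nested conditioning does not resolve this: taking $\E_{k_M(r)}$ first produces the isolated $P^{\R}_{Q^N(r-k_M(r))}-\mathrm{Id}$ term you yourself flag as unacceptable, while replacing $R^{M,N}_{k_M(r)}$ by an $\F_u$-measurable surrogate (such as $R^{M,N}_u$) incurs an error of order $(k_M(r)-u)^{1-\epsilon}$ with \emph{no} factor of $M^{-1}$, which after integration gives only $(t-u)^{2-\epsilon}$ and hence no rate. The sentence about the two conditionings ``reconstituting the full $Q^N(r-s)$-smoothing'' is precisely where the argument is not written out and, on inspection, cannot be made to work with this germ.

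The paper's fix is to change the germ rather than fight the drift: set
\[
A_{st}=\int_s^t\E_s\big[\Delta_jP^N_{R-r}\big(g_h(\E_sR^{M,N}_r+O^N_r)-g_h(\E_sR^{M,N}_{k_M(r)}+O^N_{k_M(r)})\big)\big](x)\,dr,
\]
i.e.\ replace $R^{M,N}_r$ by its $\F_s$-conditional expectation $\cR_{s,r}=\E_sR^{M,N}_r$. Now $\cR_{s,r}$ is $\F_s$-measurable, so the Gaussian convolution identity applies exactly as in the bounded case and the bound \eqref{eq:claim1} on $A_{ut}$ goes through. The price is that $\E_s[\delta A_{sut}]$ is no longer zero; one must prove a nontrivial second sewing condition by controlling $g_h(\cR_{s,r}+O^N_r)-g_h(\cR_{u,r}+O^N_r)$ via the bounds $\|\cR_{s,r}-\cR_{u,r}\|\lesssim|r-s|^{1-\epsilon}$ (Jensen plus \cref{cor:apriori}), and one must identify the sewn process with the target integral via an intermediate germ $\tilde A_{st}$ using $R^{M,N}$ itself. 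This shift of difficulty --- from an intractable $A_{ut}$ bound to a tractable $\E_s[\delta A_{sut}]$ bound --- is the key idea you are missing.
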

 We first give the proof of \cref{lem:c} using \cref{prop:crucial}.
\begin{proof}[Proof of \cref{lem:c}]
Writing $X^{h,N}=R^{h,N}+O^N+P^{N}u_0$ and $X^{M,N}=R^{M,N}+O^N+P^{N}u_0$, we have that
\begin{align}
    X^{h,N}_{t}- X^{M,N}_{t}= R^{h,N}_t-R^{M,N}_t &=\int_{0}^{t}P^{N}_{t-s}g_{h}(X^{h,N}_s)ds-\int_{0}^{t}P^{N}_{t-k_{M}(s)}g_{h}(X^{M,N}_{k_{M}(s)})ds\label{eq:bigbound}
    \\&=\int_{0}^{t}P^{N}_{t-s}[g_{h}(X^{h,N}_s)-g_{h}(X^{M,N}_{s})]ds\label{eq:buckeling}\\&\quad+\int_{0}^{t}P^{N}_{t-s}[g_{h}(X^{M,N}_{s})-g_{h}(X^{M,N}_{k_{M}(s)})]ds\label{eq:sewing} \\&\quad - \int_{0}^{t}[P^{N}_{t-k_{M}(s)}-P_{t-s}^{N}]g_{h}(X^{M,N}_{k_{M}(s)})ds.\label{eq:semigroup}
\end{align}
The last term \eqref{eq:semigroup} is easy to bound with the semigroup estimates, using also the uniform growth bound on $g_h$ from \cref{lem:g_h-bounds}:
\begin{align}
\MoveEqLeft
    \norm[\bigg]{\int_{0}^{t}[P^{N}_{t-k_{M}(s)}-P_{t-s}^{N}]g_{h}(X^{M,N}_{k_{M}(s)})ds}_{L^{\infty}}\nonumber
    \\&=\norm[\bigg]{\int_{0}^{t}P^{N}_{t-s}(P_{s-k_{M}(s)}^{N}-\operatorname{Id})g_{h}(X^{M,N}_{k_{M}(s)})ds}_{L^{\infty}}\nonumber
    \\&\lesssim \int_{0}^{t}(s-k_{M}(s))^{1-\epsilon}(t-s)^{-1+\epsilon}\norm{g_{h}(X^{M,N}_{k_{M}(s)})}_{L^{\infty}}d\nonumber s\\&\lesssim M^{-1+\epsilon}(1+\norm{X^{M,N}}_{C_{T}L^{\infty}}^{2\tilde{m}+1}).\label{eq:R-easy}
\end{align}
The first term \eqref{eq:buckeling} is a buckling term.
However, due to the growth of the Lipschitz constant of $g_h$, we cannot apply Gr\"onwall's inequality in the mild form and we switch again to the variational formulation.
Define
\begin{align*}
    \bar R^{M,N}_{t}=\int_{0}^{t}P^{N}_{t-s}g_{h}(X^{M,N}_{s})ds
\end{align*}
and use \cref{rem:weak} with $F=R^{h,N}-\bar R^{M,N}$, $G=g_h(X^{h,N})-g_h(X^{M,N})$, $H=0$.
From the energy inequality and adding and subtracting appropriate terms, as well as $R^{h,N}=\Pi_{N}R^{h,N}$ and $R^{M,N}=\Pi_{N}R^{M,N}$, we get for any $p\geq 2$
\begin{align}
\MoveEqLeft
    \norm{R^{h,N}_t-\bar R^{M,N}_{t}}_{L^{2}}^{p} 
    \nonumber\\&\leq p\int_{0}^{t}\norm{R^{h,N}_s-\bar R^{M,N}_{s}}_{L^{2}}^{p-2}\langle g_h(X^{h,N}_s)-g_h( X^{M,N}_{s}),R^{h,N}_s- R^{M,N}_{s}\rangle ds
   \nonumber \\&\quad + p\int_{0}^{t}\norm{R^{h,N}_s-\bar R^{M,N}_{s}}_{L^{2}}^{p-2}\langle g_h(X^{h,N}_s)-g_h( X^{M,N}_{s}),R^{M,N}_s- \bar R^{M,N}_{s}\rangle ds.
    \end{align}
    Keeping in mind that $X^{h,N}- X^{M,N}=R^{h,N}- R^{M,N}$, from the bounds for $g_h$ from \cref{lem:g_h-bounds} one therefore has
    \begin{align*}
    \MoveEqLeft
        \norm{R^{h,N}_t-\bar R^{M,N}_{t}}_{L^{2}}^{p}
    \\&\lesssim \int_{0}^{t}\norm{R^{h,N}_s-\bar R^{M,N}_{s}}_{L^{2}}^{p-2}\norm{R^{h,N}_s- R^{M,N}_{s}}_{L^{2}}^2 ds
    \\&\quad +  (1+\norm{X^{h,N}}_{C_{T}L^{\infty}}^{2\tilde{m}}+\norm{X^{M,N}}_{C_{T}L^{\infty}}^{2\tilde{m}})\nonumber\\&\hspace{2cm}\times\int_{0}^{t}\norm{R^{h,N}_s-\bar R^{M,N}_{s}}_{L^{2}}^{p-2}\norm{R^{h,N}_s- R^{M,N}_{s}}_{L^{2}}\norm{R^{M,N}_{s}-\bar R^{M,N}_{s}}_{L^2} ds.
\end{align*}
Utilising Young's inequality to separate all terms, we get
\begin{align*}
    \norm{R^{h,N}_t-\bar R^{M,N}_{t}}_{L^{2}}^{p}\nonumber
    &\lesssim \int_{0}^{t}\norm{R^{h,N}_s-\bar R^{M,N}_{s}}_{L^{2}}^{p}ds+ \int_{0}^{t}\norm{R^{h,N}_s- R^{M,N}_{s}}_{L^{2}}^{p}ds \nonumber
    \\&\quad + [1+\norm{X^{M,N}}_{C_{T}L^{\infty}}^{2\tilde{m}}+\norm{X^{h,M}}_{C_{T}L^{\infty}}^{2\tilde{m}}]^p \int_{0}^{t}\norm{R^{M,N}_s- \bar R^{M,N}_{s}}_{L^{\infty}}^p ds.
\end{align*}
Applying Grönwall's lemma thus yields that
\begin{align}
\MoveEqLeft
    \norm{R^{h,N}_t-\bar R^{M,N}_{t}}_{L^{2}}^{p}\nonumber
    \\&\lesssim  \int_{0}^{t}\norm{R^{h,N}_s- R^{M,N}_{s}}_{L^{2}}^{p}ds\nonumber
    \\&\qquad+[1+\norm{X^{M,N}}_{C_{T}L^{\infty}}^{2\tilde{m}}+\norm{X^{h,N}}_{C_{T}L^{\infty}}^{2\tilde{m}}]^p \int_0^t\norm{R^{M,N}_s- \bar R^{M,N}_s}_{L^{\infty}}^p\,ds. \nonumber
\end{align} 
Let us abbreviate by $K_*$ random variables such that all of their moments are bounded by the a priori bounds \cref{cor:cont-sol-bounds} and \cref{cor:apriori}.
They may change from line to line.
So for example the above inequality can be written as
\begin{align}\label{eq:barr bound}
   \squeeze[1]{\norm{R^{h,N}_t-\bar R^{M,N}_{t}}_{L^{2}}^{p}\lesssim \int_{0}^{t}\norm{R^{h,N}_s- R^{M,N}_{s}}_{L^{2}}^{p}ds+K_*\int_0^t\norm{R^{M,N}_s- \bar R^{M,N}_s}_{L^{\infty}}^p\,ds.}
\end{align}
Notice that the error $ R^{M,N}_t-   \bar R^{M,N}_t$ can be decomposed into the term in \eqref{eq:sewing} and the term in \eqref{eq:semigroup}. 
Thus, since \eqref{eq:sewing} is precisely the term that is bounded in \cref{prop:crucial} and \eqref{eq:semigroup} is bounded in \eqref{eq:R-easy}, we have that
\begin{align}\label{eq:ss-claim}
\E(E_*)^q:=\E\sup_{t\in[0,T]}\norm{R^{M,N}_t- \bar R^{M,N}_{t}}_{L^{\infty}}^q\lesssim M^{q(-1+\epsilon/2)}
\end{align} 
for any $q\geq 1$.
Putting everything together: using \eqref{eq:R-easy} and \eqref{eq:barr bound} in \eqref{eq:bigbound} and using \eqref{eq:ss-claim} we get
\begin{align*}
    \|R^{h,N}_t-R^{M,N}_t\|_{L^2}^p\lesssim K_*\big(M^{p(-1+\eps)} +E_*^p\big)+\int_{0}^{t}\norm{R^{h,N}_s- R^{M,N}_{s}}_{L^{2}}^{p}ds.
\end{align*}
By Gr\"onwall's inequality we get
\begin{align*}
   \sup_{t\in[0,T]} \|R^{h,N}_t-R^{M,N}_t\|_{L^2}^p\lesssim K_*\big(M^{p(-1+\eps)} +E_*^p\big),
\end{align*}
and taking expectations and applying H\"older's inequality to handle $\E(K_\ast E_\ast^p)$, we finally get
\begin{align*}
    \E\sup_{t\in[0,T]}\|R^{h,N}_t-R^{M,N}_t\|_{L^2}^p\lesssim M^{p(-1+\eps)} 
\end{align*}
as claimed.
\end{proof}
\begin{proof}[Proof of \cref{prop:crucial}]
The proof of the proposition relies on stochastic sewing in the form of \cref{lem:ss}, similarly to \cref{prop:main-prop}, however, starting with a somewhat more complicated germ $A_{s,t}$. 
Nevertheless, some of the steps remain unchanged from the proof of \cref{prop:main-prop}, in these cases we shall simply refer back.

Again due to an application of \cref{prop:vKolmogorov} and Besov embeddings, it suffices to prove that for any $j\geq -1$, $x\in\T$, $0\leq s\leq t\leq R\leq T$, one has
 \begin{align}\label{eq:goal}
 \MoveEqLeft
       \E\Big|\int_{s}^{t}\Delta_j P^{N}_{R-r}[g_{h}(R^{M,N}_{r}+O_r^N+P^{N}_{r}u_0)-g_{h}(R^{M,N}_{k_{M}(r)}+O_{k_{M}(r)}^N+P^{N}_{k_{M}(r)}u_0)]dr\Big|^{p}\nonumber
       \\&\leq C 2^{-jp\epsilon}M^{p(-1+2\epsilon)}\abs{t-s}^{(1/4+\epsilon/2)p}.
\end{align}
 To prove \eqref{eq:goal}, we consider $j\geq-1$, $x\in\T$, $R\leq T$ fixed and apply Lemma \ref{lem:ss} with the germ
\begin{align*}
    A_{st}=\int_{s}^{t}\E_{s}[\Delta_j P^{N}_{R-r}(g_{h}(\E_{s} R^{M,N}_{r}+\tilde O_r^N)-g_{h}(\E_{s}R^{M,N}_{k_{M}(r)}+\tilde O_{k_{M}(r)}^N))](x)dr,
\end{align*}
where we denote again by $\tilde{O}_{t}=O_t+P^{N}_{t}u_0$ the shifted OU process.
We aim to prove that for all $0\leq s<u<t<R$ such that $|t-s|\leq |R-t|$, one has
\begin{align}\label{eq:claim1}
        \|A_{ut}\|_{L^p(\Omega)}\lesssim 2^{-j\epsilon} M^{-1+2\epsilon} (R-t)^{-1/4-\epsilon/2}(t-u)^{1/2+\epsilon}
\end{align} and
 \begin{align}\label{eq:claim2}
        \|\E_{s}\delta A_{s,u,t}\|_{L^p(\Omega)}\lesssim 2^{-j\epsilon} M^{-1+2\epsilon} (R-t)^{-1/4-\epsilon/2}(t-s)^{1+\epsilon}.
    \end{align}     
To mimic the steps of the proof of \cref{prop:main-prop},
we recall some analogues of the basic bounds used therein.
Let us now use the shorthand $\cR_{u,r}=\E_uR^{M,N}_r$.

Notice that due to \eqref{eq:u0-bound1} and \eqref{eq:u0-bound2}, we have that $\tilde O$ satisfies the same bounds as $O$, that is \eqref{eq:ou-reg} and \eqref{eq:s-t-bound} for $O$ replaced by $\tilde O$. 

By Jensen's inequality and \cref{cor:apriori} one has for any $q\geq 1$
\begin{align}\label{eq:useful2}
    \big(\E\norm{\cR_{s,r}-\cR_{s,k_M(r)}}_{L^{\infty}}^{q}\big)^{1/q}
    &\leq  \big(\E\norm{R^{M,N}_{r}-R^{M,N}_{k_{M}(r)}}_{L^\infty}^{q}\big)^{1/q}\lesssim_q (r-k_{M}(r))^{1-\epsilon}.
\end{align}
This yields the analogue of
\eqref{eq:ou-reg}
to bound the difference of the two arguments of the nonlinearity:
 \begin{align}\label{eq:R-reg}
 \MoveEqLeft
     \big(\E\norm{\cR_{u,r}+\tilde O_r^N-(\cR_{u,k_{M}(r)}+\tilde O^{N}_{k_M(r)})}_{\calC^{-1/2+\eps}}^q\big)^{1/q}\lesssim_q (r-k_{M}(r))^{1/2-\epsilon}.
 \end{align}
Similarly, one has the bound, if $k_M(r)>u$,
\begin{align}\label{eq:trade-off}
\MoveEqLeft
    \big(\E\norm{\cR_{u,r}+P_{r-u}\tilde O^{N}_u-(\cR_{u,k_{M}(r)}+P_{k_{M}(r)-u}\tilde O^{N}_u)}_{\calC^{-1/2+\eps}}^q\big)^{1/q}\nonumber\\
    &\lesssim_q (r-k_{M}(r))^{1-2\epsilon}(k_{M}(r)-u)^{-1/2+\epsilon},
\end{align}
where we employed \eqref{eq:useful2} and \eqref{eq:s-t-bound} with $\theta=1-4\epsilon$, $r-k_M(r)$ in place of $s$, and $k_M(r)-u$ in place of $t$.
The bound
\eqref{eq:comp-b} 
is replaced by
\begin{align}\label{eq:gh-bound}
\MoveEqLeft
   \E \sup_{r\in[0,R]}\norm{g_{h}'(\lambda (\cR_{u,r}+\tilde O^{N}_r)+(1-\lambda)(\cR_{u,k_{M}(r)}+\tilde O^{N}_{k_{M}(r)}))}_{\calC^{\theta}}^q\nonumber
   \\&\lesssim \E\big((1+\norm{X^{M,N}}_{C_{T}L^{\infty}}^{2\tilde m -1})\norm{X^{M,N}}_{C_{T}\calC^{\theta}}\big)^q\lesssim_{q,\theta} 1.
\end{align}
for $\theta\in (0,1/2)$, which follows using the identification of the Besov space $\calC^{\theta}$ with the space of $\theta$-Hölder continuous functions and the bound for $g_{h}''$ from \cref{lem:g_h-bounds}.

To prove \eqref{eq:claim1}, we distinguish $\abs{t-u}\leq 3TM^{-1}$ and $\abs{t-u}> 3TM^{-1}$ as before.
In the case of $\abs{t-u}\leq 3TM^{-1}$, we estimate similar as in \eqref{eq:long-smallt-s} replacing $ \tilde O_\cdot$  by $\cR_{u,\cdot}+ \tilde O_{\cdot}^{N}$ and use \eqref{eq:R-reg} and \eqref{eq:gh-bound} to obtain the desired estimate.

In the case of $\abs{t-u}> 3TM^{-1}$, we introduce $t'=k_{M}(u)+3TM^{-1}$ as before and decompose the time integral $\int_{u}^{t}$ into $\int_{u}^{t'}$ and $\int_{t'}^{t}$, where $\int_{u}^{t'}$ is treated as in the case before and $\int_{t'}^{t}$ is left to bound.
Recall that for $r\geq t'$ we have $k_M(r)-u\geq(r-u)/2$ and that
$\cR_{u,r}$ and $\cR_{u,k_M(r)}$ are $\F_u$-measurable.
Thus we have, similarly to \eqref{eq:conditional-easy} that the time integral from $t'$ to $t$ equals
\begin{align}
\MoveEqLeft
    \int_{t'}^{t}\Delta_j P_{R-u}\E_{u}[(g_{h}(\cR_{u,r}+\tilde O^{N}_r)-g_{h}(\cR_{u,k_{M}(r)}+\tilde O^{N}_{k_{M}(r)}))]dr \nonumber \\&=
    \int_{u}^{t}\Delta_j P_{R-u}(P^{\R}_{Q^{N}(r-u)}g_h)(\cR_{u,r}+P_{r-u}\tilde O^{N}_u)\nonumber
    \\&\hspace{5cm}-(P^{\R}_{Q^{N}(k_{M}(r)-u)}g_h)(\cR_{u,k_{M}(r)}+P_{k_{M}(r)-u}\tilde O^{N}_u)dr \nonumber
    \\&=\int_{t'}^{t}\Delta_j P_{R-r}^{N}[(P^{\R}_{Q^{N}(r-u)}g_h)(\cR_{u,r}+P_{r-u}\tilde O^{N}_u)\nonumber
    \\&\hspace{5cm}-(P^{\R}_{Q^{N}(r-u)}g_h)(\cR_{u,k_{M}(r)}+P_{k_{M}(r)-u}\tilde O^{N}_u)](x)dr\nonumber
        \\&\quad+\squeeze[0.5]{\int_{t'}^{t}\Delta_j P_{R-r}^{N}[[(P^{\R}_{Q^{N}(r-u)}g_h)-(P^{\R}_{Q^{N}(k_{M}(r)-u)}g_h)](\cR_{u,k_{M}(r)}+P_{k_{M}(r)-u}\tilde O^{N}_u)](x)dr},
        \nonumber
        \\
        &=:I_1+I_2,\nonumber
\end{align}
where
\begin{align*}
Q^{N}(t-s)=\E\paren[\bigg]{\int_{s}^{t}\int_\T p^{N}_{t-r}(x-y)\xi(dr,dy)}^2.
\end{align*} 
We are left to bound the two terms $I_1$ and $I_2$.
To bound the term $I_1$ we proceed just as in \eqref{eq:I1-easy}, but using \eqref{eq:trade-off} instead of \eqref{eq:s-t-bound} and the bound
\begin{align*}
\MoveEqLeft
   \E\norm{(P^{\R}_{Q^{N}(r-u)}g_{h})'(\lambda (P_{r-u}\tilde O^{N}_{u}+\cR_{u,r})+(1-\lambda)(P_{k_{M}(r)-u}\tilde O^{N}_u + \cR_{u,k_{M}(r)}))}_{\calC^{\theta}}^q\lesssim_{\theta,q}1
   \nonumber
\end{align*}
for $\theta\in (0,1/2)$, $q\geq 1$,
in place of \eqref{eq:comp-b2}.
This gives 
\begin{equ}
(\E|I_1|^p)^{1/p}\lesssim     2^{-j\epsilon}M^{-1+2\epsilon} (R-t)^{-1/4-\epsilon/2}(t-u)^{1/2+\epsilon}.
\end{equ}
To bound $I_2$ we replace the bound \eqref{eq:Q-bound} by the following:
\begin{align*}
        \E&\abs[\bigg]{\int_{t'}^{t}\Delta_j P_{R-r}^{N}[[(P^{\R}_{Q^{N}(r-u)}g_h)-(P^{\R}_{Q^{N}(k_{M}(r)-u)}g_h)](\cR_{u,k_{M}(r)}+P_{k_{M}(r)-u}\tilde O^{N}_{u})](x)dr}^p
        \\&\lesssim 2^{-j\epsilon p}\E \paren[\bigg]{\int_{t'}^{t}(R-r)^{-\epsilon/2}
        \\&\quad\qquad\qquad\times\squeeze[0.5]{ \norm{[(P^{\R}_{Q^{N}(r-u)}g_h)-(P^{\R}_{Q^{N}(k_{M}(r)-u)}g_h)](\cR_{u,k_{M}(r)}+P_{k_{M}(r)-u}\tilde O^{N}_{u})}_{L^{\infty}}dr}}^p
        \\&\lesssim 2^{-j\epsilon p}\E \paren[\bigg]{\int_{t'}^{t}(R-r)^{-\epsilon/2} \norm{(P^{\R}_{Q^{N}(r-u)}g_h)-(P^{\R}_{Q^{N}(k_{M}(r)-u)}g_h)}_{C^{0}_{\omega}}\\&\hspace{6.5cm}\times\norm{\omega^{-1}(\cR_{u,k_{M}(r)}+P_{k_{M}(r)-u}\tilde O^{N}_{u})}_{L^{\infty}}dr}^p .
\end{align*}
Then we further estimate using \eqref{eq:HK-3} and \eqref{eq:Q-est} (which also holds true for $Q^N$ instead of $Q$),
\begin{align*}
   \norm{(P^{\R}_{Q^{N}(r-u)}g_h)-(P^{\R}_{Q^{N}(k_{M}(r)-u)}g_h)}_{C^{0}_{\omega}}&\lesssim (Q^{N}(r-u)-Q^{N}(k_{M}(r)-u))\norm{g_h}_{C^{2}_{\omega}} \\&\lesssim (k_{M}(r)-u)^{-1/2+\epsilon}(r-k_{M}(r))^{1-\epsilon}\norm{g_h}_{C^{2}_{\omega}}
\end{align*} 
and
\begin{align*}
   \norm{\omega^{-1}(\cR_{u,k_{M}(r)}+P_{k_{M}(r)-u}\tilde O^{N}_{u})}_{L^{\infty}}\lesssim (1+ \norm{\tilde O^{N}}_{C_{T}L^{\infty}}^2+\norm{R^{M,N}}_{C_{T}L^{\infty}}^2)^{\beta/2}.
\end{align*}
Together with the a priori bounds from \cref{cor:apriori}, we get 
\begin{equ}
(\E|I_2|^p)^{1/p}\lesssim     2^{-j\epsilon}M^{-1+\epsilon} (R-t)^{-\epsilon/2}(t-u)^{1/2+\epsilon}.
\end{equ}
The proof of \eqref{eq:claim1} is therefore finished.

It remains to prove \eqref{eq:claim2}.
Before proceeding, we derive some bounds on $\cR_{\cdot,\cdot}$.
Recall that for any norm $|\cdot|$, any $q\geq 1$, and any random variables $X,Y$, where $Y$ is $\mathcal{F}_s$-measurable, one has $(\E|X-\E_s X|^q)^{1/q}\leq 2(\E|X-Y|^q)^{1/q}$. 
Therefore we can estimate, using \cref{cor:apriori},
\begin{align}\label{eq:useful1}
    \big(\E\norm{R^{M,N}_{r}-\cR_{s,r}}_{L^{\infty}}^{q}\big)^{1/q}&\leq 2 \big(\E\norm{R^{M,N}_{r}-R^{M,N}_{s}}_{L^\infty}^{q}\big)^{1/q}
   \lesssim_q \abs{r-s}^{1-\epsilon}.
\end{align} 
As a consequence, by the triangle inequality, for $s\leq u$
\begin{align}\label{eq:easier-bound}
\MoveEqLeft
(\E\norm{\cR_{s,r}-\cR_{u,r}}_{L^{\infty}}^{q})^{1/q}
\lesssim_q  |r-s|^{1-\epsilon}.
\end{align}
Similarly, we can bound for $s\leq u$
\begin{align}\label{eq:easier-bound2}
    (\E\norm{\cR_{s,r}-\cR_{u,r}}_{\calC^{1/2-\epsilon/2}}^{q})^{1/q}\lesssim_q \abs{r-s}^{3(1-\epsilon)/4},
\end{align} using again the a priori estimates, \cref{cor:apriori} with $\alpha=1/2-\epsilon$.
Next, we claim that\footnote{Notice that simply substituting $k_M(r)$ in place of $r$ in \eqref{eq:easier-bound} does not give the required order, since one may have $|k_M(r)-s|\gg |r-s|$.}
\begin{align}\label{eq:not-easier-bound1}
\E\big(\norm{R^{M,N}_{k_{M}(r)}-\cR_{s,k_{M}(r)}}_{L^{\infty}}^{q}\big)^{1/q}\lesssim_q \abs{r-s}^{1-\epsilon}.
\end{align}
Indeed, to see this, we make the following case separation as in \cite[Proof of Lemma 4.7]{BDG-Levy}: If $s\geq k_{M}(r)-TM^{-1}$ then $\cR_{s,k_{M}(r)}=\E_{s}R^{M,N}_{k_{M}(r)}=R^{M,N}_{k_{M}(r)}$ and the left-hand side vanishes, such that the bound is trivial. If $s\leq k_{M}(r)-TM^{-1}$, then  $|k_M(r)-s|\leq |r-s|$ and the bound follows from \eqref{eq:useful1} plugging $k_{M}(r)$ in place of $r$.\\
Moreover, using triangle inequality and \eqref{eq:not-easier-bound1} we obtain for $s\leq u$,
\begin{align}\label{eq:not-easier-bound}
\MoveEqLeft
(\E\norm{\cR_{s,k_M(r)}-\cR_{u,k_M(r)}}_{L^{\infty}}^{q})^{1/q}
\lesssim_q  |r-s|^{1-\epsilon}.
\end{align}
Now we move on to prove \eqref{eq:claim2},
for which we have to bound the term
\begin{align*}
    \E_{s}\delta A_{sut}=\Delta_j \int_{u}^{t}\E_s\E_u P^{N}_{R-r}\paren[\big]{&g_{h}(\cR_{s,r}+\tilde O^{N}_r)-g_{h}(\cR_{s,k_{M}(r)}+\tilde O^{N}_{k_{M}(r)})
    \\&-(g_{h}(\cR_{u,r}+\tilde O^{N}_r)-g_{h}(\cR_{u,k_{M}(r)}+\tilde O^{N}_{k_{M}(r)}))}(x)dr.
\end{align*}
Let us write 
\begin{align*}
\MoveEqLeft
    g_{h}(\cR_{s,r}+\tilde O^{N}_r)-g_{h}(\cR_{s,k_{M}(r)}+\tilde O^{N}_{k_{M}(r)})
    -(g_{h}(\cR_{u,r}+\tilde O^{N}_r)-g_{h}(\cR_{u,k_{M}(r)}+\tilde O^{N}_{k_{M}(r)})) \\&= (\cR_{s,r}-\cR_{u,r})\int_{0}^{1}g'_{h}(\lambda (\cR_{s,r}+\tilde O^{N}_{r})+(1-\lambda)(\cR_{u,r}+\tilde O^{N}_r))d\lambda
    \\&\quad +(\cR_{s,k_{M}(r)}-\cR_{u,k_{M}(r)})
    \\&\hspace{3cm}\times\squeeze[0.5]{\int_{0}^{1}g'_{h}(\lambda (\cR_{s,k_{M}(r)}+\tilde O^{N}_{k_{M}(r)})+(1-\lambda)(\cR_{u,k_{M}(r)}+\tilde O^{N}_{k_{M}(r)}))d\lambda}
    \\&= (\cR_{s,r}-\cR_{u,r})\int_{0}^{1}\paren[\bigg]{g'_{h}(\lambda (\cR_{s,r}+\tilde O^{N}_{r})+(1-\lambda)(\cR_{u,r}+\tilde O^{N}_r))
    \\&\hspace{3.5cm}-\squeeze[0.5]{g'_{h}(\lambda (\cR_{s,k_{M}(r)}+\tilde O^{N}_{k_{M}(r)})+(1-\lambda) (\cR_{u,k_{M}(r)}+\tilde O^{N}_{k_{M}(r)}))}d\lambda}
    \\&\quad + (\cR_{s,k_{M}(r)}-\cR_{s,r}+\cR_{u,r}-\cR_{u,k_{M}(r)})
    \\&\hspace{3cm}\times\squeeze[0.5]{\int_{0}^{1}g'_{h}(\lambda (\cR_{s,k_{M}(r)}+\tilde O^{N}_{k_{M}(r)})+(1-\lambda)(\cR_{u,k_{M}(r)}+\tilde O^{N}_{k_{M}(r)}))d\lambda}
    \\&=:J_1+J_2.
\end{align*}
By the bounds for $g_{h}$ from \cref{lem:g_h-bounds}, as well as using \eqref{eq:useful2} and \eqref{eq:easier-bound}, \eqref{eq:not-easier-bound}, we obtain for $J_2$ the following estimate
\begin{align*}
\MoveEqLeft
    \paren[\bigg]{\E\abs[\bigg]{\Delta_j \int_{u}^{t}\E_{s}[P^{N}_{R-r}\E_{u}[J_2]](x)dr}^{p}}^{1/p}\\&\lesssim
    2^{-j\epsilon}(R-t)^{-\epsilon/2}(1+\E\norm{R^{M,N}}_{C_{T}L^{\infty}}^{2\tilde{m}p}+\E\norm{\tilde O^{N}}_{C_{T}L^{\infty}}^{2\tilde{m}p})^{1/2p}
    \\&\hspace{3cm}\times\paren[\bigg]{\E\paren[\bigg]{\int_{u}^{t}\norm{\cR_{s,k_{M}(r)}-\cR_{s,r}+\cR_{u,r}-\cR_{u,k_{M}(r)}}_{L^{\infty}}dr}^{2p}}^{1/2p}
    \\&\lesssim 2^{-j\epsilon}(R-t)^{-\epsilon/2}
    \int_{u}^{t}\min((r-k_{M}(r))^{1-\epsilon/2}, (r-s)^{1-\epsilon/2})dr
    \\&\lesssim 2^{-j\epsilon}(R-t)^{-\epsilon/2}
    (t-s)^{1+\epsilon/2}M^{-1+\epsilon},
\end{align*} 
where the last bound follows by the interpolation $\min(a,b)\leq a^{1-\theta}b^{\theta}$ for $\theta=\epsilon/(1-\epsilon)\in (0,1)$, $a,b\geq 0$.

To estimate $J_{1}$ we distinguish again the cases $\abs{t-u}\leq 3TM^{-1}$ and $\abs{t-u}>3TM^{-1}$ like for $A_{ut}$. In the first case, the estimate is carried out as in \eqref{eq:I1-easy} using \eqref{eq:R-reg} in place of \eqref{eq:ou-reg}.  Notice that \eqref{eq:easier-bound2} gives an additional factor $(r-s)^{3(1-\epsilon)/4}$, and thus we obtain that in the case of $\abs{t-u}\leq 3M^{-1}$,
\begin{equ}
\paren[\bigg]{\E\abs[\bigg]{\Delta_j \int_{u}^{t}\E_{s}[P^{N}_{R-r}\E_{u}[J_1]](x)dr}^{p}}^{1/p}\lesssim     2^{-j\epsilon}M^{-1+\epsilon} (R-t)^{-\epsilon/2}(t-u)^{5/4+\epsilon/4}.
\end{equ}
If $\abs{t-u}>3TM^{-1}$, we introduce $t'$ as before, in particular $k_{M}(r)\geq u\geq s$.
The integral from $u$ to $t'$ is treated as in the case $|t-u|\leq 3TM^{-1}$. For the integral from $t'$ to $t$ we have that, using that the factor $\cR_{s,r}-\cR_{u,r}$ is $\F_{u}$-measurable,
\begin{align*}
    &\paren[\bigg]{\E\abs[\bigg]{\Delta_j \int_{t'}^{t}\E_{s}[P^{N}_{R-r}\E_{u}[J_1]](x)dr}^{p}}^{1/p}\\&\lesssim 2^{-j\epsilon}(R-t)^{-1/4-\epsilon/2} \paren[\bigg]{\E\int_{t'}^{t}\norm{\cR_{s,r}-\cR_{u,r}}_{\calC^{1/2-\epsilon/2}}^p\\&\quad\times\int_{0}^{1}\norm[\Big]{(P^{\R}_{Q^{N}(r-u)}g_{h}')(\lambda \cR_{s,r}+(1-\lambda)\cR_{u,r}+P_{r-u}\tilde O^{N}_u)
    \\&\qquad\quad-(P^{\R}_{Q^{N}(k_{M}(r)-u)}g_{h}')(\lambda \cR_{s,k_{M}(r)}+(1-\lambda)\cR_{u,k_{M}(r)}+P_{k_{M}(r)-u}\tilde O^{N}_u)}_{\calC^{-1/2+\epsilon}}^p d\lambda dr}^{1/p}.
\end{align*}
Using similar steps as for estimating the time integral from $t'$ to $t$ above, now applied for $g_{h}'\in C^{2}_{\omega}$ instead of $g_{h}$, to bound
\begin{align*}
\MoveEqLeft
    \paren[\bigg]{\E\int_{t'}^{t}\norm{(P^{\R}_{Q^{N}(r-u)}g_{h}')(\lambda \cR_{s,r}+(1-\lambda)\cR_{u,r}+P_{r-u}\tilde O^{N}_u)
    \\&\hspace{0.6cm}\squeeze[0.5]{-(P^{\R}_{Q^{N}(k_{M}(r)-u)}g_{h}')(\lambda \cR_{s,k_{M}(r)}+(1-\lambda)\cR_{u,k_{M}(r)}+P_{k_{M}(r)-u}\tilde O^{N}_u)}}_{\calC^{-1/2+\epsilon}}^{2p} dr}^{1/2p}
\end{align*}
for any fixed $\lambda\in[0,1]$, as well as \eqref{eq:easier-bound2} and \eqref{eq:trade-off}, we arrive at
\begin{align*}
\MoveEqLeft
    \paren[\bigg]{\E\abs[\bigg]{\Delta_j \int_{t'}^{t}\E_{s}[P^{N}_{R-r}\E_{u}[J_1]](x)dr}^{p}}^{1/p}\\&\lesssim
    2^{-j\epsilon}(t-s)^{3(1-\epsilon)/4}(R-t)^{-1/4-\epsilon/2}
    \int_{t'}^{t}(r-k_{M}(r))^{1-2\epsilon}(k_{M}(r)-u)^{-1/2+\epsilon}dr
    \\&\lesssim 2^{-j\epsilon}(t-s)^{5/4+\epsilon/4}(R-t)^{-1/4-\epsilon/2}M^{-1+2\epsilon} .
\end{align*}
The estimates for $J_1$ and $J_2$ together yield \eqref{eq:claim2}.
Hence an application of \cref{lem:ss} together with \eqref{eq:claim1} and \eqref{eq:claim2}  yields \eqref{eq:goal}, provided we justify that 
\begin{align}\label{eq:identify}
    \mathcal{A}_t=\int_{0}^{t}\Delta_j P^{N}_{R-r}[g_{h}(R^{M,N}_{r}+\tilde O_r^N)-g_{h}(R^{M,N}_{k_{M}(r)}+\tilde O_{k_{M}(r)}^N)](x)dr.
\end{align}
To this end, we need to verify \eqref{eq:ss1} and \eqref{eq:ss2}. We do so in two steps: we show the bounds with $\mathcal{A}_{st}-A_{st}$ replaced by $\mathcal{A}_{st}-\tilde A_{st}$ and by $\tilde A_{st}-A_{st}$,
where the intermediate process $\tilde A_{st}$ is defined by
\begin{align*}
    \tilde{A}_{s,t}=\int_{s}^{t}\E_{s}[\Delta_j P^{N}_{R-r}(g_{h}(R^{M,N}_{r}+\tilde O^{N}_r)-g_{h}(R^{M,N}_{k_{M}(r)}+\tilde O^{N}_{k_{M}(r)}))](x)dr.
\end{align*}
The difference $\mathcal{A}_{st}-\tilde A_{st}$ is treated similarly to the argument concluding the proof of \cref{prop:main-prop}: \eqref{eq:ss2} is satisfied with $K_2=0$, and
\begin{align*}
    \paren[\big]{\E\abs[\big]{\mathcal{A}_{st}-\tilde{A}_{st}}^p}^{1/p}\leq  4(t-s)(1+\E\norm{X^{M,N}}_{C_{T}L^{\infty}}^{p(2\tilde{m}+1)})^{1/p},
\end{align*}
verifying \eqref{eq:ss1}.
The verification of \eqref{eq:ss1} for $\tilde A_{st}-A_{st}$ is identical. As for \eqref{eq:ss2},
we clearly have
\begin{align*}
\MoveEqLeft
    (\E\abs{A_{st}-\tilde{A}_{st}}^p)^{1/p}\\&\lesssim (t-s)(1+\E\norm{X^{M,N}}_{C_{T}L^{\infty}}^{2p(2\tilde{m}+1)})^{1/2p}\
    \\
    &\times \sup_{r\in[s,t]}\Big(\big(\E\norm{R^{M,N}_{r}-\cR_{s,r}}_{L^{\infty}}^{2p}\big)^{1/2p}
    +\big(\E\norm{R^{M,N}_{k_{M}(r)}-\cR_{s,k_{M}(r)}}_{L^{\infty}}^{2p}\big)^{1/2p}\Big)
    \\&\lesssim (t-s)^{2-\epsilon}
\end{align*}
where we used \eqref{eq:useful1} and \eqref{eq:not-easier-bound1}.
This verifies \eqref{eq:ss2} for $\tilde A_{st}-A_{st}$, which proves \eqref{eq:identify} and brings the proof to an end.
\end{proof}

\begin{appendix}
\section*{Appendix}\label{Appendix}
\begin{proof}[Proof of \cref{prop:Gronwall}]
We have that for $0\leq l\leq t\leq T$,
\begin{align*}
        \norm{X_l-Y_l}&\leq\norm{Z_l}+\int_0^l\norm{S(s,l)\big(F(X_{\tau(s)})-F(Y_{\tau(s)})\big)}\,ds
        \\&\leq \norm{Z_l} + L_{2}L_{1}\int_{0}^{l} \norm{X_{\tau(s)}-Y_{\tau(s)}} ds
        \\&\leq \norm{Z_l} + L_{2}L_{1}\int_{0}^{t} \sup_{r\leq s}\norm{X_{r}-Y_{r}} ds,
\end{align*}
where we used that $\tau(s)\leq s$.
Thus we deduce that 
\begin{align*}
\sup_{0\leq l\leq t}\norm{X_l-Y_l}\leq \sup_{0\leq l\leq t}\norm{Z_l}+L_{2}L_{1}\int_{0}^{t} \sup_{0\leq r\leq s}\norm{X_{r}-Y_{r}} ds.
\end{align*}
An application of the classical Grönwall inequality to the increasing map $[0,T]\ni t\mapsto \sup_{0\leq l\leq t}\norm{X_l-Y_l}$ yields that
\begin{align*}
\sup_{t\in[0,T]}\norm{X_t-Y_t}\leq \exp(L_{2}L_{1}T)\sup_{t\in[0,T]}\norm{Z_t},
\end{align*} which yields the claim after taking the $p$-th moment.
\end{proof}

\begin{proof}[Proof of \cref{prop:vKolmogorov}]
The proof follows closely the one of the standard Kolmogorov continuity theorem.
Without loss of generality due to rescaling, we may assume $T=1$.
Note that for $s\leq t$ and a partition $(r_{k})_{k=0,\dots,K}$ of $[s,t]$ with $r_0=s$, $r_{K}=t$, we can write, using the semigroup property, 
\begin{align}\label{eq:decomp}
X_{t}-S_{t-s}X_s= \sum_{k=0}^{K-1} S_{t-r_{k+1}}\paren[\Big]{X_{r_{k+1}}-S_{r_{k+1}-r_{k}}X_{r_{k}}}.
\end{align}
\iffalse
We further note that 
\begin{align}\label{eq:unif-bound}
\norm[\Big]{S_{t-t_{k+1}}\paren[\Big]{X_{t_{k+1}}-S_{t_{k+1}-t_{k}}X_{t_{k}}}}&\leq \paren[\Big]{\sup_{r\in[0,1]}\norm{S_r}}\norm{X_{t_{k+1}}-S_{t_{k+1}-t_{k}}X_{t_{k}}}\nonumber\\&\leq Ce^{c}\norm{X_{t_{k+1}}-S_{t_{k+1}-t_{k}}X_{t_{k}}},
\end{align}
\fi
Recall also that for a strongly continuous semigroup there exist constants $C\geq 1$, $c\geq 0$, such that for all $r\geq 0$, $\norm{S_{r}}\leq C e^{c r}$.
Let for $n\in\N$, $D_n$ be the dyadic partition of $[0,1]$ with mesh size $2^{-n}$ and set $D=\cup_{n}D_{n}$. Then due to the assumed continuity of $t\mapsto X_t$ and $t\mapsto S_t X_s$ for any fixed $s\in[0,1]$, it suffices to show that there exists $C''<\infty$, such that
\begin{align}\label{eq:claim}
    \E\big(\sup_{s< t\in D}|t-s|^{-\gamma }\|X_t-S_{t-s}X_s\|\big)^p\leq C''C',
%\E[\sup_{t\in\cup_{n}D_{n}}\norm{X_{t}}^p]=\E[\sup_{t\in\cup_{n}D_{n}}\norm{X_{t}-S_{t}X_0}^p]\leq C''C'.
\end{align}
%Let for $s,t,u\in D_{n}$, $s\leq t\leq u$, $$Y_{s,t,u}:=S_{u-t}\paren[\Big]{X_{t}-S_{t-s}X_{s}}$$ and 
Let for $\gamma\in (0,\alpha/p)$,  
$$K_{\gamma}:=\sup_{n\in\N}\sup_{t,u\in D_{n}, \, t+\frac{1}{2^{n}}\leq u} 2^{\gamma n} \norm{X_{t+2^{-n}}-S_{2^{-n}}X_t}.$$
Using the assumption and $\abs{D_n}-1=2^n$, we obtain that
\begin{align}\label{eq:p-bound}
\E K_{\gamma}^p&\leq \sum_{n\in\N}\sum_{t\in D_n\setminus\{1\}}2^{\gamma n p} \E \norm{X_{t+2^{-n}}-S_{2^{-n}}X_t}^p\nonumber\\&\leq \sum_{n\in\N} (|D_n|-1) 2^{n\gamma p} C' 2^{-n(1+\alpha)} = C' \sum_{n\in\N} 2^{-n (\alpha-\gamma p)}=:C'\tilde C ,
\end{align}
where the sum is finite due to $\gamma p<\alpha$.

Let now $s,t\in D$ with $0\leq s\leq t\leq 1$ (excluding the case $s=0$ and $t=1$, which is covered above) and let $m\in\N$ be the largest integer such that $D_{m}\cap [s,t]$ is a singleton, which we denote by $\{t_0\}$. We have that $2^{-m-1}\leq(t-s)\leq 2^{-m+1}$. Define inductively a sequence $(t_{n})$, such that $t_{n}\in D_{m+n}$, $t_{n}\leq t$ and $(t-t_n)<2^{-(m+n)}$. The case $n=0$ is clear. For the inductive step, set $t_{n+1}=t_{n}+2^{-(m+n+1)}$ if $t_n + 2^{-(m+n+1)}\leq t$ and otherwise set $t_{n+1}=t$. Analogously we define a sequence $(s_n)$ but in a decreasing way. Since $s,t\in \cup_{n} D_n$, there exist $n_s,n_t$ such that $t_{n}=t$ for all $n\geq n_t$ and $s_{n}=s$ for all $n\geq n_s$. By the decomposition \eqref{eq:decomp}, we have
\begin{align*}
X_t - S_{t-t_0} X_{t_0} &= \sum_{i=0}^{n_t-1} S_{t-t_{i+1}}(X_{t_{i+1}}-S_{2^{-(m+i+1)}}X_{t_i}),\\
S_{t-t_0}(X_{t_0} - S_{t_0-s} X_s) &= -\sum_{i=0}^{n_s -1} S_{t-s_{i+1}}(X_{s_{i+1}}-S_{2^{-(m+i+1)}}X_{s_i})
\end{align*}
and subtraction yields
\begin{align}\label{eq:decomp2}
X_t-S_{t-s}X_s=\sum_{i=0}^{n_t} S_{t-t_{i+1}}(X_{t_{i+1}}-S_{2^{-(m+i+1)}}X_{t_i})+\sum_{i=0}^{n_s} S_{t-s_{i+1}}(X_{s_{i+1}}-S_{2^{-(m+i+1)}}X_{s_i}).
\end{align}
Since $t_{i+1}=t_{i}+2^{-(m+n+1)}$ and $s_{i}=s_{i+1}+2^{-(m+n+1)}$, we can bound each of the summands in \eqref{eq:decomp2} by $K_{\gamma}2^{-\gamma (m+i+1)}$. Using triangle inequality and the uniform bound on the operator-norm for $t\in[0,1]$, we thus arrive at
\begin{align*}
\norm{X_t-S_{t-s}X_s}\leq Ce^c \sum_{i=0}^{\infty} K_{\gamma} 2^{-\gamma (m+i)}\leq 2^{-1}Ce^c\abs{t-s}^{\gamma}K_{\gamma}\sum_{i=0}^{\infty}  2^{-\gamma i},
\end{align*}
where in the last estimate we used that $2^{-m}\leq 2\abs{t-s}$ and the sum is finite due to $\gamma>0$. Taking supremum and $p$-th moment, \eqref{eq:claim} thus follows from \eqref{eq:p-bound} with $C'':=\paren[\big]{2^{-1}Ce^c\sum_{i=0}^{\infty}  2^{-\gamma i}}^p \tilde C$.
\end{proof}

\begin{proof}[Proof of \cref{lem:g_h-bounds}]
We have that 
\begin{align*}
    \Phi_{h}(z)&=z+\int_{0}^{h}f(\Phi_{s}(z))ds,\quad h\geq 0
    \\g_{h}(z)&=\frac{1}{h}\int_{0}^{h}f(\Phi_{s}(z))ds,\quad h>0.
\end{align*}
Thus, we obtain
\begin{align*}
    (\Phi_{h}(x)-\Phi_{h}(y))^2&=(x-y)^2+2\int_{0}^{h}(f(\Phi_{s}(x))-f(\Phi_{s}(y)))(\Phi_{s}(x)-\Phi_{s}(y))ds
    \\&\leq (x-y)^2 + 2K\int_{0}^{h}(\Phi_{s}(x)-\Phi_{s}(y))^2ds
\end{align*} and hence with Grönwall's inequality
\begin{align*}
    (\Phi_{h}(x)-\Phi_{h}(y))^2&\leq e^{Kh} (x-y)^2.
\end{align*}
This implies that $z\mapsto \Phi_{h}(z)$ is Lipschitz continuous with Lipschitz constant $L(h)=e^{Kh/2}$, that satisfies $L(h)^{1/h}=e^{K/2}$. 
Since $\Phi_{h}$ is Lipschitz, we moreover obtain that 
$$\abs{\partial_z\Phi_{s}(z)}\leq e^{Ks/2}\leq e^{Kh/2}$$
for $s\in[0,h]$.
Moreover we have that, using Young's inequality 
\begin{align*}
    \Phi_{h}(z)^2&=z^2+2\int_{0}^{h}f(\Phi_{s}(z))\Phi_{s}(z)ds
    \\&=z^{2}+2\int_{0}^{h}(f(\Phi_{s}(z))-f(0))\Phi_{s}(z)ds
    +2\int_{0}^{h}f(0)\Phi_{s}(z)ds
    \\&\leq z^{2}+2K\int_{0}^{h}\Phi_{s}(z)^2ds
    +hf(0)^2+\int_{0}^{h}\Phi_{s}(z)^2ds,
\end{align*} such that Gr\"onwall's inequality implies that
\begin{align*}
    \sup_{s\in[0,h]}\abs{\Phi_{s}(z)}^2\leq e^{(2K+1)h}(h\abs{f(0)}^2+\abs{z}^2)
\end{align*} and thus
\begin{align}\label{eq:phi-growth}
    \sup_{s\in[0,h]}\abs{\Phi_{s}(z)}\leq e^{(2K+1)h/2}(h^{1/2}\abs{f(0)}+\abs{z})\leq Ke^{(2K+1)h/2}(1+\abs{z}),
\end{align} for $h\leq 1$ and using that $\abs{f(0)}\leq K $.
Furthermore, we have that
\begin{align*}
    (\partial_{z}^{2}\Phi_{h}(z))^2&=1+2\int_{0}^{h}(\partial f)(\Phi_{s}(z))(\partial_{z}^2\Phi_{s}(z))^2ds\\&\qquad +2\int_{0}^{h}(\partial^{2}f)(\Phi_{s}(z))(\partial_{z}\Phi_{s}(z))^2  \partial_{z}^2\Phi_{s}(z)ds
\end{align*} and thus, using Young's inequality, the bounds on $f$ and \eqref{eq:phi-growth}, as well as the Lipschitz bound of $\Phi_{s}$, there exist constants $C'(K,m), C(K,m)>1$, such that
\begin{align*}
   (\partial_{z}^{2}\Phi_{h}(z))^2&\leq 1+ 2K\int_{0}^{h}(\partial_{z}^2\Phi_{s}(z))^2ds + \int_{0}^{h}((\partial^{2}f)(\Phi_{s}(z))(\partial_{z}\Phi_{s}(z))^2)^2ds \\&\qquad +  \int_{0}^{h}(\partial_{z}^2\Phi_{s}(z))^2ds
   \\&\leq 1+ (2K+1)\int_{0}^{h}(\partial_{z}^2\Phi_{s}(z))^2ds + h C'(K,m) e^{h C(K,m)}(1+\abs{z}^{2(2m-1)}).
\end{align*} 
Thus Gr\"onwall's inequality implies that for possibly different constants and $h\leq 1$,
\begin{align}\label{eq:phi-i1}
    \sup_{s\in[0,h]}\abs{\partial^{2}_z\Phi_{s}(z)}\leq C'(K,m)e^{C(K,m) h} (1+\abs{z}^{2m-1}).
\end{align} 
The bound for $\partial_{z}^{3}\Phi_{h}(z)$ follows in a similar fashion using \eqref{eq:phi-i1} and
\begin{align*}
    (\partial_{z}^{3}\Phi_{h}(z))^2&=1+2\int_{0}^{h}(\partial f)(\Phi_{s}(z))(\partial_{z}^3\Phi_{s}(z))^2ds\\&\qquad +6\int_{0}^{h}(\partial^{2}f)(\Phi_{s}(z))\partial_{z}\Phi_{s}(z)  \partial_{z}^2\Phi_{s}(z)\partial_{z}^{3}\Phi_{s}(z)ds \\& \qquad+ 2\int_{0}^{h}(\partial^{3}f)(\Phi_{s}(z))(\partial_{z}\Phi_{s}(z))^3 \partial_{z}^3\Phi_{s}(z)ds
\end{align*}
and we obtain, for possibly different constants $C'(K,m),C(K,m)$ and $\tilde{m}\geq m$, $h\leq 1$,
\begin{align}\label{eq:phi-i2}
    \sup_{s\in[0,h]}\abs{\partial^{3}_z\Phi_{s}(z)}\leq  C'(K,m)e^{C(K,m) h} (1+\abs{z}^{2\tilde{m}-3}).
\end{align} 
We have that 
\begin{align*}
     \partial_z g_{h}(z)&=\frac{1}{h}\int_{0}^{h}(\partial f)(\Phi_{s}(z))\partial_z \Phi_{s}(z)ds, 
     \\\partial^{2}_z g_{h}(z)&=\frac{1}{h}\int_{0}^{h}(\partial^{2}f)(\Phi_{s}(z))(\partial_z \Phi_{s}(z))^{2}ds+\frac{1}{h}\int_{0}^{h}(\partial f)(\Phi_{s}(z))\partial_z^2 \Phi_{s}(z))ds, 
     \\\partial^{3}_z g_{h}(z)&=\frac{1}{h}\int_{0}^{h}(\partial^{3}f)(\Phi_{s}(z))(\partial_z \Phi_{s}(z))^{3}ds+
     3\frac{1}{h}\int_{0}^{h}(\partial^{2}f)(\Phi_{s}(z))(\partial_z^2 \Phi_{s}(z))\partial_z \Phi_{s}(z)ds
     \\&\qquad+\frac{1}{h}\int_{0}^{h}(\partial f)(\psi_{s}(z))\partial_z^3 \Phi_{s}(z))ds.
 \end{align*} 
Together with \eqref{eq:phi-growth}, \eqref{eq:phi-i1} and \eqref{eq:phi-i2} we thus obtain 
\begin{align*}
    \abs{g_h(z)}\leq K (1+\sup_{s\in [0,1]}\abs{\Phi_{s}(z)}^{2m+1})\leq K^2e^{(2K+1)h/2}(1+\abs{z}^{2m+1})
\end{align*} with $K^2e^{(2K+1)h/2}\leq K^{2}e^{(2K+1)/2}$ for $h\in[0,1]$ and
\begin{align*}
    \abs{\partial_z g_h (z)}\leq K e^{Kh/2} (1+\sup_{s\in [0,1]}\abs{\Phi_{s}(z)}^{2m})\leq C'(K,m)e^{hC(K,m)}(1+\abs{z}^{2m}).
\end{align*} 
Using \eqref{eq:phi-i1}, \eqref{eq:phi-i2}, we find for the higher order derivatives, $i=2,3$, that there exists constants $\tilde{m}\geq m$, $K(h,m)>1$, such that
\begin{align*}
    \abs{\partial^{i}_z g_{h}(z)}&\leq K(h,m)(1+\abs{z}^{2\tilde m -i}),
\end{align*} where $K(h,m)\leq \tilde K$ for $h\in[0,1]$ for a constant $\tilde K>1$.
Moreover, since $\partial_h \partial_{z}\Phi_{h}(z)=(\partial f)(\Phi_{h}(z))\partial_{z}\Phi_{h}(z)$ by Gr\"onwall's inequality and $\partial f\leq K$, we have that
\begin{align*}
    \partial_z \Phi_{h}(z)\leq e^{\int_{0}^{h}(\partial f)(\Phi_{s}(z))ds}\leq e^{hK}
\end{align*}
and thus
\begin{align*}
    \partial_z g_{h}(z)=\frac{1}{h}(\partial_z \Phi_{h}(z)-1)\leq \frac{1}{h}(e^{hK}-1)\leq K.
\end{align*}
Finally, we can bound using the bounds on $f$ and \eqref{eq:phi-growth}
\begin{align*}
    \abs{g_{h}(z)-g_{0}(z)}=\abs{g_{h}(z)-f(z)}&=\frac{1}{h}\abs[\bigg]{\int_{0}^{h}f(\Phi_{s}(z))-f(z)ds}
    \\&\leq K^{2}e^{Kh/2}(1+\abs{z}^{2m+1})\sup_{s\in[0,h]}\abs{\Phi_{s}(z)-z}
\end{align*}
and
\begin{align*}
    \abs{\Phi_{s}(z)-z}\leq s K^{2}e^{Kh/2} (1+\abs{z}^{2m+1}).
\end{align*}
Together we thus obtain
\begin{align*}
    \abs{g_{h}(z)-g_{0}(z)}\leq h K^{4}e^{Kh}(1+\abs{z}^{2(2m+1)}), \quad h\in[0,1].
\end{align*}
\end{proof}
\end{appendix}

\newcommand{\etalchar}[1]{$^{#1}$}

\end{document}